\newcommand{\R}{\mathbb R}
\newcommand{\N}{\mathbb N}
\newcommand{\Pro}{\mathbb P}
\newcommand{\Var}{\mathrm{Var}}
\newcommand{\Cov}{\operatorname{Cov}}
\providecommand{\abs}[1]{\left|#1\right|}
\providecommand{\norm}[1]{\left\|#1\right\|}
\providecommand{\pr}[1]{\mathbb{P}\left[ #1\right]}
\providecommand{\prb}[1]{\mathbb{P}\bigl[ #1\bigr]}
\providecommand{\ee}[1]{\mathbb{E}\left[ #1\right]}
\DeclareMathOperator{\conv}{conv}
\def\limd{\xrightarrow[n\to\infty]{d}}
\newtheorem{thm}{Theorem}[section]
\newtheorem{cor}[thm]{Corollary}
\newtheorem{lemma}[thm]{Lemma}
\newtheorem{proposition}[thm]{Proposition}
\newtheorem{thmalpha}{Theorem}
\theoremstyle{definition}
\newtheorem{rmk}[thm]{Remark}
\begin{document}

\title{\bf Limit theorems for random points in a simplex}

\medskip

\author{ Anastas Baci, Zakhar Kabluchko, Joscha Prochno, Mathias Sonnleitner, and Christoph Th\"ale}



\date{}

\maketitle

\begin{abstract}
\small
In this work the $\ell_q$-norms of points chosen uniformly at random in a centered regular simplex in high dimensions are studied. Berry-Esseen bounds in the regime $1\leq q < \infty$ are derived and complemented by a non-central limit theorem together with moderate and large deviations in the case where $q=\infty$. A comparison with corresponding results for $\ell_p^n$-balls is carried out as well.
\medspace
\\
{\bf Keywords}. {Berry-Esseen bound, central limit theorem, high dimensions, moderate deviations, $\ell_q$-norm, large deviations, regular simplex}\\
{\bf MSC}. Primary  60F05, 60F10; Secondary 52A23, 60D05.
\end{abstract}


\section{Introduction and main results}

One of the central aspects of high-dimensional probability theory is the study of random geometric quantities and the phenomena that occur as the dimension of the ambient space tends to infinity. The field is intimately connected to geometric functional analysis as well as convex and discrete geometry, and has attracted considerable attention in the last decade. This is in parts because of numerous applications that can be found in the statistics and machine learning literature related to high-dimensional data, e.g., in form of dimensionality reduction in information retrieval \cite{BM2001, LG2003}, clustering \cite{FB2003, M2018}, principal component regression \cite{S2018}, community detection in networks \cite{FH2016, LLV2018}, topic discovery \cite{DRIS2013}, or covariance estimation \cite{CRZ2016, V2018}. A famous example for a high dimensional limit theorem is the Maxwell-Poincar\'e-Borel Lemma  (see, e.g., \cite{DF1987} or \cite[Lemma 1.2]{L1996}) stating that for fixed $k\in\N$, the distribution of the first $k$ coordinates of a point chosen uniformly at random from the $n$-dimensional Euclidean ball or sphere of radius one converges weakly to a $k$-dimensional Gaussian distribution as the space dimension $n$ tends to infinity.

Today, there is a vast literature on high-dimensional central limit theorems, which describe the Gaussian fluctuations for various random geometric quantities in different contexts, for instance, the famous central limit theorem for convex bodies \cite{K2007}, a central limit theorem for the volume of random projections of the cube \cite{PPZ14}, a central limit theorem for the Euclidean norm of random projections of points chosen randomly from $\ell_p^n$-balls \cite{APT2019} and several others, see \cite{AGetal2019,BV2007,GKT2019,GrygT20,GT20,JP2019,KPT2019_I,KPT2019_II,MM2007,R2005,Schmu2001,S82}. Other limit theorems, such as moderate deviations principles and large deviations principles, have only been studied in high-dimensional probability related to the geometry of convex bodies since their introduction by Kabluchko, Prochno, and Th\"ale \cite{KPT2019_II} and Gantert, Kim, and Ramanan \cite{GKR2017}. In fact, those kind of limit theorems are more sensitive to the randomness involved and display a non-universal behavior in their speed and/or rate function. The latter fact makes the subject particularly interesting as, contrary to a central limit theorem which implies the somewhat negative result that fluctuations do not provide much information because of universality, the moderate and large deviations limit theorems are distribution dependent and encode subtle geometric information about the underlying structure. In the past three years a number of interesting results in this direction have been obtained and we refer the reader to \cite{APT2018,GKR2016,KPT2019_cube,KPT2019_sanov,KR2018,KR2019,LR2020}. An interesting connection between the study of moderate and large deviations and the famous Kannan-Lov\'asz-Simonovits conjecture has recently been discovered by Alonso-Guti\'errez, Prochno, and Th\"ale in \cite{APT2020}.

In this work, we study limit theorems for (suitably normalized) $\ell_q$-norms of points chosen uniformly at random in a centered and regular simplex. When $1\leq q < \infty$, we provide a Berry-Esseen-type rate of convergence to a standard Gaussian distribution. For the case where $q=+\infty$, we complement this result with a non-central limit theorem, establishing the weak convergence to a Gumbel distribution, and provide both a moderate and large deviations principle. Let us point out that the method of proof in the Berry-Esseen-type central limit theorem differs from the previously mentioned ones in the sense that here we use a connection to the asymptotic theory of sums of random spacings (and not, e.g., a Schechtman-Zinn type probabilistic representation).

In order to be more precise, let $n\in\N$ and consider the $(n-1)$-dimensional simplex
\[
\Delta_{n-1}:=\Bigg\{x\in\R^n \,:\, x_i\geq 0,\, \sum_{i=1}^n x_i=1\Bigg\} = \conv\{e_1,\dots,e_n \} ,
\]
where $\conv(A)$ denotes the convex hull of a set $A$ and $e_1,\ldots,e_n$ stands for the unit vectors of the standard orthonormal basis of $\R^n$. Consider a sequence $(E_i)_{i\in\N}$ of independent random variables with an exponential distribution of mean 1 and, for each $n\in\N$, let $S_n:=\sum_{i=1}^n E_i$ denote the $n^{\text{th}}$ partial sum. We study the sequence of random vectors
\[
Z_n:=\left(\frac{E_1}{S_n}-\frac{1}{n},\dots,\frac{E_n}{S_n}-\frac{1}{n}\right)\in \Delta_{n-1}-\frac{1}{n}(e_1+\ldots+e_n),\qquad n\in\N.
\]
In fact, uniformly distributed points in the standard centered simplex in $\R^n$, $\Delta_{n-1}-\frac{1}{n}(e_1+\ldots+e_n)$, have the same distribution as $Z_n$ (see, e.g., \cite{MathaiBook}). A different method to generate uniform random vectors in the simplex is by letting $U_1,\ldots, U_{n-1}$ be independent and identically distributed random variables with a uniform distribution on $[0,1]$ and considering
\[
G_{n,i}:=U_{(i)}-U_{(i-1)}, \qquad i=1,\ldots,n,
\]
where $U_{(i)}$ is the $i^{\text{th}}$ order statistic of $U_1,\ldots,U_{n-1}$, with the convention that $U_{(0)}:=0, U_{(n)}:=1$. Then the vector $G_n:=(G_{n,1},\ldots,G_{n,n})$ is uniformly distributed in $\Delta_{n-1}$ and $Z_n\overset{d}{=} G_n-\frac{1}{n}(e_1+\ldots+e_n)$.
A proof of this fact can be found, for instance, in \cite[Chapter 6.4]{DN2003}.

The first main result of this paper is the following Berry-Esseen-type theorem for the $\ell_q$-norm $\|\,\cdot\,\|_q$ ($1\leq q<\infty$) of uniform random points in $\Delta_{n-1}-\frac 1n (e_1+\ldots+e_n)$. Define
\[
\mu_q:=\ee{\abs{E_1-1}^q}\quad \text{and} \quad \sigma_q^2:=q^{-2}\mu_q^{-2}(\mu_{2q}-(q^2+2q+2)\mu_q^2+2(q+1)\mu_q-1).
\]
For example $\mu_1=2e^{-1},\sigma_1=2e-5$, and $\mu_2=\sigma_2=1$.

\begin{thmalpha}\label{thm:berryesseen}
Let $1\leq q<\infty$. There exists some constant $c_q>0$ only depending on $q$ such that, for all $n\in \mathbb{N}$,
\[
\sup_{x\in\R}\left| \mathbb{P}\left[\sqrt{n}\left(\frac{n^{1-1/q}\Vert Z_n\Vert_q\,\mu_q^{-1/q} -1}{\sigma_q}\right)\leq x\right]-\mathbb{P}[N\leq x]\right| \leq c_q\frac{\log n}{\sqrt{n}},
\]
where $N\sim\mathcal N(0,1)$ is a standard Gaussian random variable.
\end{thmalpha}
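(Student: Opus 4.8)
The plan is to reduce the study of $\|Z_n\|_q$ to a central limit theorem with Berry--Esseen rate for sums of functions of i.i.d.\ exponential random variables, and then to control the error coming from the normalization by $S_n$. Write $\|Z_n\|_q^q = \sum_{i=1}^n \bigl|\tfrac{E_i}{S_n} - \tfrac1n\bigr|^q = S_n^{-q}\sum_{i=1}^n \bigl|E_i - \tfrac{S_n}{n}\bigr|^q$. The heuristic is that $S_n/n \to 1$, so $\sum_{i=1}^n |E_i - S_n/n|^q$ should behave like $\sum_{i=1}^n |E_i - 1|^q$, which is a sum of i.i.d.\ random variables with mean $\mu_q$ and finite variance (all moments of $E_1$ are finite, so $\mu_{2q}<\infty$). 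The subtlety is that $S_n/n$ depends on all the $E_i$, so $|E_i - S_n/n|^q$ are not independent; the first step is therefore to linearize. I would Taylor-expand $g(t) := |E_i - t|^q$ around $t=1$, writing $|E_i - S_n/n|^q = |E_i-1|^q - q\,\sgn(E_i-1)|E_i-1|^{q-1}\bigl(\tfrac{S_n}{n}-1\bigr) + (\text{quadratic remainder})$, and similarly expand $S_n^{-q} = n^{-q}(1 + (\tfrac{S_n}{n}-1))^{-q}$. Since $\tfrac{S_n}{n}-1 = \tfrac1n\sum_{i=1}^n (E_i-1)$ is itself a normalized sum of i.i.d.\ centered variables, after collecting terms one finds
\[
n^{1-1/q}\|Z_n\|_q\,\mu_q^{-1/q} \;=\; \Bigl(\tfrac1n\sum_{i=1}^n Y_i\Bigr)^{1/q}\bigl(1 + O_{\Pro}(n^{-1})\bigr),
\]
where $Y_i := \mu_q^{-1}\bigl(|E_i-1|^q - q\,\sgn(E_i-1)|E_i-1|^{q-1} + q\,|E_i-1|^q\bigr) + (\text{const})$ is chosen so that $\E Y_i = 1$ and $\Var Y_i$ equals precisely the expression appearing inside the square root in the definition of $\sigma_q^2$ (this is where the combination $\mu_{2q} - (q^2+2q+2)\mu_q^2 + 2(q+1)\mu_q - 1$ comes from: it is the variance of the linearized statistic, accounting for the covariance between the leading term and the correction from $S_n$). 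Verifying that this algebraic identity holds is a bookkeeping computation: one expands the square of the linearization, uses $\E(E_1-1)=0$, $\E(E_1-1)^2=1$, and the definitions of $\mu_q,\mu_{2q}$, and checks the cross terms.

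Once the linearization is in place, apply the classical Berry--Esseen theorem to $\tfrac1{\sqrt n}\sum_{i=1}^n (Y_i - 1)$: since $Y_i$ has a finite third absolute moment (again because $E_1$ has all moments), we get
\[
\sup_{x\in\R}\Bigl|\Pro\Bigl[\tfrac{1}{\sqrt n \,\sqrt{\Var Y_1}}\sum_{i=1}^n (Y_i-1)\le x\Bigr] - \Pro[N\le x]\Bigr| \le \frac{C_q}{\sqrt n}.
\]
Then the map $u \mapsto u^{1/q}$ must be applied around $u=1$: a delta-method argument shows that $\bigl(\tfrac1n\sum Y_i\bigr)^{1/q}$, centered and scaled, is Gaussian with the $\tfrac1q$ factor producing the $q^{-2}$ in $\sigma_q^2$; the Berry--Esseen rate is preserved through this smooth transformation on the event $\{\tfrac1n\sum Y_i \in [\tfrac12,2]\}$, whose complement has probability $\le C_q e^{-cn}$ (or at least $O(1/\sqrt n)$) by a standard large-deviation or Chebyshev bound. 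Combining these yields a rate of $O(1/\sqrt n)$ for the linearized statistic.

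The remaining—and I expect main—obstacle is controlling the $O_{\Pro}(n^{-1})$ multiplicative error and the quadratic Taylor remainders, and showing they degrade the Berry--Esseen rate by at most a logarithmic factor (hence the $\tfrac{\log n}{\sqrt n}$ in the statement, rather than $\tfrac1{\sqrt n}$). The issue is that the remainder terms involve quantities like $\bigl(\tfrac{S_n}{n}-1\bigr)^2\sum_i |E_i-1|^{q-2}$, which are typically of order $n^{-1}$ but have heavier tails; a remainder $R_n$ with $\Pro[|R_n| > t/\sqrt n]$ small only for $t \gtrsim \log n$ translates, via the standard lemma that $\sup_x|\Pro[A_n + R_n \le x] - \Pro[N\le x]| \le \sup_x|\Pro[A_n\le x]-\Pro[N\le x]| + \Pro[|R_n|>\delta] + C\delta$ (optimized over $\delta$), into the claimed $\tfrac{\log n}{\sqrt n}$ bound. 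Concretely I would: (i) prove a moment bound $\E|R_n|^p \le C_{p,q} n^{-p}$ for some fixed $p$, using that $S_n/n - 1$ has Gaussian-type concentration on the bulk and the moments of $\sum_i |E_i-1|^{q-\alpha}$ grow polynomially; (ii) convert this to a tail bound via Markov, losing the $\log n$; (iii) feed it into the perturbation lemma. Assembling steps (i)--(iii) with the Berry--Esseen bound for the linearized sum and the smoothness of $u\mapsto u^{1/q}$ gives the theorem. The low-$q$ cases $q=1,2$ can be double-checked against the explicit constants $\mu_1 = 2e^{-1}$, $\sigma_1^2 = 2e-5$, $\mu_2 = \sigma_2 = 1$ stated in the excerpt, which serves as a useful sanity check on the variance identity.
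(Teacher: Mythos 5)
Your overall strategy is viable and genuinely different from the paper's. The paper does not linearize by hand: it identifies $\|Z_n\|_q^q$ in distribution with $n^{-q}\sum_{i=1}^n|nG_{n,i}-1|^q$ for uniform spacings $G_{n,i}$ and invokes a Berry--Esseen theorem of Mirakhmedov for sum-functions of spacings, which already delivers the rate $n^{-1/2}$ for $n^{q-1}\|Z_n\|_q^q$ with the corrected variance $\Var|E-1|^q-\Cov(E,|E-1|^q)^2$; the only remaining steps are the covariance computation $\Cov(E,|E-1|^q)=(q+1)\mu_q-1$ and a Berry--Esseen version of the delta method for $u\mapsto u^{1/q}$, which is where the $\log n$ enters (exactly as in your last step). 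Your route re-derives the content of Mirakhmedov's theorem from scratch, which is more self-contained but is also where all the difficulty sits. (The paper itself carries out your kind of linearization in its final section, but only to the level of a CLT without rates, using Pollard's empirical-process machinery to handle the remainder.)

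Three concrete problems with the proposal as written. First, your displayed $Y_i$ is not the right linearized summand: after decoupling the products of empirical averages, the corrections must be \emph{linear} in $E_i-1$ with deterministic coefficients, namely $Y_i=|E_i-1|^q-\bigl((q+1)\mu_q-1\bigr)(E_i-1)$ up to centering; the coefficient is exactly $\Cov(E,|E-1|^q)$, which is why $\Var Y_i=\Var|E-1|^q-\Cov(E,|E-1|^q)^2$. Your formula, containing $-q\sgn(E_i-1)|E_i-1|^{q-1}+q|E_i-1|^q$ inside $Y_i$, has the wrong variance (it would bring in $\mu_{2q-2}$ and other moments absent from $\sigma_q^2$). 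Second, for $1\le q<2$ the map $t\mapsto|x-t|^q$ is not twice differentiable at $t=x$, so the "quadratic Taylor remainder'' does not exist pointwise, and your bound via $(\bar E_n-1)^2\sum_i|E_i-1|^{q-2}$ fails outright at $q=1$ (that sum has infinite expectation). The $q=1$ remainder must instead be controlled by counting the indices with $E_i$ between $1$ and $\bar E_n$; for $1<q<2$ one needs a fractional Taylor/Lipschitz estimate of the type the paper uses in its Lemma on $\bigl||x-t_1|^q-|x-t_2|^q\bigr|$. Third, the tail conversion in your steps (i)--(iii) is too weak: a fixed-$p$ moment bound $\E|R_n|^p\le Cn^{-p}$ plus Markov at threshold $\log n/n$ yields only $\Pro[|R_n|>\log n/n]\le C(\log n)^{-p}$, which does not decay like $\log n/\sqrt n$. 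You must instead use the exponential concentration of $\bar E_n$ (exponentials have finite exponential moments) to restrict to the event $(\bar E_n-1)^2\le C\log n/n$, whose complement has polynomially small probability, and bound the remainder deterministically there before applying the perturbation lemma with $\varepsilon\asymp\log n/\sqrt n$. With these repairs the argument goes through and recovers the theorem.
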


In the proof of this result we use a connection to the asymptotic theory of sums of random spacings. This allows us to use a Berry-Esseen result of Mirakhmedov \cite{M2005}, who improved a theorem due to Does and Klaassen \cite{DK1984}.

As a direct corollary of Theorem \ref{thm:berryesseen}, we obtain the following central limit theorem, which is the analogue for the regular simplex of the corresponding central limit theorems in \cite{KPT2019_I,KPT2019_II} for $\ell_p^n$-balls.

\begin{cor}\label{cor:cltqnorm}
	For all $1\leq q<\infty$, we have
	\[
	\sqrt{n}\big(n^{1-1/q}\| Z_n \|_q\,\mu_q^{-1/q} -1\big)\limd Z \sim N(0,\sigma_q^2).
	\]
\end{cor}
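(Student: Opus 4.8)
The plan is to deduce Corollary~\ref{cor:cltqnorm} directly from Theorem~\ref{thm:berryesseen}. The Berry--Esseen estimate in Theorem~\ref{thm:berryesseen} states that the Kolmogorov distance between the law of
\[
W_n := \sqrt{n}\left(\frac{n^{1-1/q}\|Z_n\|_q\,\mu_q^{-1/q}-1}{\sigma_q}\right)
\]
and that of a standard Gaussian $N\sim\mathcal N(0,1)$ is bounded by $c_q\,(\log n)/\sqrt n$. Since $(\log n)/\sqrt n\to 0$ as $n\to\infty$, the sequence $W_n$ converges to $N$ in Kolmogorov distance, hence in distribution. This is the essential and only real content of the argument.

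Next I would translate convergence in distribution of $W_n$ into the asserted convergence of $\sqrt n\,(n^{1-1/q}\|Z_n\|_q\,\mu_q^{-1/q}-1)$. Writing
\[
\sqrt n\,\bigl(n^{1-1/q}\|Z_n\|_q\,\mu_q^{-1/q}-1\bigr) = \sigma_q\,W_n,
\]
we have a deterministic scalar multiple of $W_n$. By the continuous mapping theorem (applied to the continuous map $x\mapsto \sigma_q x$, recalling $\sigma_q>0$), it follows that $\sigma_q W_n \limd \sigma_q N$, and $\sigma_q N\sim\mathcal N(0,\sigma_q^2)$, which is precisely the claimed limit $Z\sim N(0,\sigma_q^2)$.

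There is essentially no obstacle here: the corollary is an immediate consequence of Theorem~\ref{thm:berryesseen} together with the elementary facts that control of the Kolmogorov distance implies weak convergence and that weak convergence is preserved under multiplication by a positive constant. The only point that deserves a word is that one should note $\sigma_q>0$ for all $1\le q<\infty$ (so that the rescaling is nondegenerate and the displayed normalization in $W_n$ makes sense); this positivity is implicit in the statement of Theorem~\ref{thm:berryesseen} and can be recorded in passing. Thus the proof is a short deduction rather than a new argument.
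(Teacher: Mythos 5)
Your argument is correct and is exactly how the paper obtains this result: Corollary~\ref{cor:cltqnorm} is stated there as a direct consequence of Theorem~\ref{thm:berryesseen}, since the Berry--Esseen bound $c_q(\log n)/\sqrt{n}\to 0$ forces convergence in Kolmogorov distance and hence in distribution, and multiplying by $\sigma_q>0$ yields the stated $N(0,\sigma_q^2)$ limit. (The paper also sketches an independent route via empirical process theory in its final section, but that is presented as an alternative, not as the proof of the corollary.)
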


When the parameter $q$ satisfies $q=\infty$, we cannot expect convergence in distribution of $\|Z_n\|_\infty$ to a Gaussian random variable. However, we establish a non-central limit theorem with a double exponential (also known as Gumbel) distribution in the limit. The result in this case reads as follows.

\begin{thmalpha}\label{thm:non-central limit theorem}
	We have
	\[
	n\Vert Z_n\Vert_{\infty}-(\log n-1)\limd G,
	\]
	where $G$ has a standard Gumbel distribution, i.e., $\mathbb{P}[G\leq x]=\exp(-e^{-x})$ for $x\in\R$.
\end{thmalpha}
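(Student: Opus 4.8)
The plan is to work with the representation $Z_n \overset{d}{=} G_n - \frac1n(e_1+\dots+e_n)$, where $G_n=(G_{n,1},\dots,G_{n,n})$ is the vector of uniform spacings generated by $n-1$ i.i.d.\ uniforms on $[0,1]$. Then $n\|Z_n\|_\infty = \max_{1\le i\le n} |nG_{n,i}-1|$. The key is the classical fact (Weiss, or via the exponential representation $G_{n,i}\overset{d}{=}E_i/S_n$) that the spacings $nG_{n,i}$ behave asymptotically like i.i.d.\ standard exponentials: since $S_n/n\to 1$ almost surely and at rate $O_P(n^{-1/2})$, one has $nG_{n,i}=E_i\cdot(n/S_n)$, and the common multiplicative factor $n/S_n = 1+O_P(n^{-1/2})$ is a negligible perturbation of the maximum. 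So the heart of the matter is to show
\[
\max_{1\le i\le n}|E_i-1| - (\log n - 1) \limd G,
\]
and then to transfer this to $\max_i |nG_{n,i}-1|$ by controlling the error introduced by replacing $E_i$ with $E_i\cdot(n/S_n)$.

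First I would establish the i.i.d.\ exponential statement. For i.i.d.\ standard exponentials the maximum $\max_i E_i - \log n \limd G$ is textbook extreme value theory, but here we have $|E_i-1|$, which has a different tail: $\mathbb P[|E_1-1|>t] = \mathbb P[E_1>1+t] + \mathbb P[E_1<1-t] = e^{-(1+t)}$ for $t\ge 1$ (the lower-tail term vanishes once $t\ge1$), so $\mathbb P[|E_1-1|>t]=e^{-1}e^{-t}$ for $t\ge 1$. Hence for $x$ fixed and $n$ large, with $t_n := \log n - 1 + x$ (which exceeds $1$ eventually),
\[
\mathbb P\Big[\max_{1\le i\le n}|E_i-1|\le \log n - 1 + x\Big] = \big(1 - e^{-1}e^{-(\log n - 1 + x)}\big)^n = \Big(1 - \frac{e^{-x}}{n}\Big)^n \longrightarrow \exp(-e^{-x}),
\]
which is exactly the standard Gumbel CDF. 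This pins down the correct centering constant $\log n - 1$ appearing in the statement.

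Next I would handle the passage from $E_i$ to $nG_{n,i}=E_i\cdot(n/S_n)$. Write $R_n := n/S_n$, so $n\|Z_n\|_\infty \overset{d}{=} \max_i |R_n E_i - 1|$. The idea is a sandwich: on the event $\{|R_n-1|\le \delta_n\}$ for a slowly shrinking $\delta_n$ (say $\delta_n = n^{-1/2}\log n$, which has probability $\to 1$ by the CLT for $S_n$), one has $|R_nE_i-1|\le |E_i-1| + \delta_n E_i$ and $|R_nE_i-1|\ge |E_i-1| - \delta_n E_i$. Since $\max_i E_i \le \log n + O_P(\log\log n)$ with high probability, the perturbation term satisfies $\delta_n\max_i E_i = O_P(n^{-1/2}(\log n)^2) = o_P(1)$, so $\max_i |R_nE_i - 1| = \max_i|E_i-1| + o_P(1)$. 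Combined with the display above and Slutsky's theorem, this yields $n\|Z_n\|_\infty - (\log n - 1)\limd G$.

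The main obstacle is making the sandwich argument fully rigorous while keeping the error terms uniform: one must simultaneously control $R_n$ (globally, via the CLT or a concentration bound for $S_n$) and the size of the individual exponentials $E_i$ (uniformly over $i$, via a union bound giving $\mathbb P[\max_i E_i > \log n + 2\log\log n] \to 0$), and then argue that the two-sided bounds $|E_i-1|\pm \delta_n E_i$ both have the same Gumbel limit after centering — i.e.\ that adding a term that is $o_P(1)$ uniformly does not shift the extreme-value limit. A clean way to package this is: for any $\eta>0$, with probability tending to $1$ one has the inclusion of events
\[
\Big\{\max_i|E_i-1|\le \log n - 1 + x - \eta\Big\}\subseteq \Big\{\max_i|R_nE_i-1|\le \log n - 1 + x\Big\}\subseteq\Big\{\max_i|E_i-1|\le \log n - 1 + x + \eta\Big\},
\]
take $n\to\infty$ using the i.i.d.\ computation, then let $\eta\downarrow 0$ using continuity of the Gumbel CDF. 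A minor additional check is that the lower tail $\mathbb P[E_1<1-t]$ contributes nothing for the relevant range of thresholds, which is immediate since $t_n\to\infty$.
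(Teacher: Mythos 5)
Your proposal is correct, but it takes a genuinely different route from the paper. The paper never computes the extreme-value limit of $\max_i|E_i-1|$ directly; instead it first shows (Lemma \ref{lem:equalmaxima}) that $\|Z_n\|_\infty$ coincides with the \emph{one-sided} maximum $T_n=\max_i(E_i/S_n-1/n)=M_n/S_n-1/n$ except on an event of probability $e^{-cn}$ (the two can differ only if $M_n/S_n<2/n$), and then invokes Devroye's classical result $n\max_i G_{n,i}-\log n\limd G$ for the maximal uniform spacing, which already has the self-normalization by $S_n$ built in. You instead keep the absolute value throughout, observe that the lower tail of $|E_1-1|$ is void for thresholds $t\geq 1$ so that $\Pro[|E_1-1|>t]=e^{-1}e^{-t}$, deduce the Gumbel limit for $\max_i|E_i-1|$ with centering $\log n-1$ by a direct computation, and then remove the factor $R_n=n/S_n$ by the sandwich $\bigl|\max_i|R_nE_i-1|-\max_i|E_i-1|\bigr|\leq|R_n-1|\max_iE_i=O_P(n^{-1/2}(\log n)^2)$. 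Both arguments are sound. Your version is more self-contained (no appeal to Devroye) and makes transparent where the shift by $1$ in the centering comes from, namely the factor $e^{-1}$ in the tail of $|E-1|$; the paper's version buys reusability, since Lemma \ref{lem:equalmaxima} is an exponential-equivalence statement that is needed again for the MDP and LDP proofs, whereas your in-probability sandwich would not suffice at those speeds. The only point to tighten in a write-up is the final event-inclusion display: as stated the inclusions hold only up to an exceptional event, so it is cleaner to write the two-sided probability bound $\Pro[\max_i|R_nE_i-1|\leq u]\leq\Pro[\max_i|E_i-1|\leq u+\eta]+\Pro[|R_n-1|\max_iE_i>\eta]$ and its counterpart, then let $n\to\infty$ and $\eta\downarrow 0$ using continuity of the Gumbel distribution function; this is exactly what you indicate and it closes the argument.
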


The next result describes the upper and lower deviations on a moderate scale, which lies between the Gaussian fluctuations of a central limit theorem and the large deviations which occur on the scale of a law of large numbers. For a formal definition of a moderate deviations principle (MDP) and a large deviations principle (LDP) we refer to Section \ref{sec:LDPMDPprel} below.

\begin{thmalpha}\label{thm:moderate}
Let $(s_n)_{n\in\N}$ be a positive sequence with $s_n\to \infty$ and $s_n/\log n\to 0$. Then, the sequence $\left(\frac{\log n}{s_n}\left(\frac{n}{\log n}\|Z_n\|_{\infty}-1\right)\right)_{n\in\mathbb{N}}$ satisfies an MDP with speed $s_n$ and rate function
\[
\mathbb{I}(z) :=
	    \begin{cases}
	      z &: z\geq 0,\\
	      +\infty &: z<0.
	    \end{cases}
\]
\end{thmalpha}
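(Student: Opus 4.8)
The plan is to reduce the statement to a moderate deviations principle for the maximum of i.i.d. exponential random variables, using the probabilistic representation $Z_n \overset{d}{=} (E_1/S_n - 1/n,\dots,E_n/S_n - 1/n)$. Observe that $\|Z_n\|_\infty = \max_{1\le i\le n}|E_i/S_n - 1/n|$, and since $S_n/n \to 1$ almost surely and, more importantly, $S_n/n$ concentrates on the scale relevant to speed $s_n$ with $s_n/\log n \to 0$, the dominant contribution comes from $M_n := \max_{1\le i\le n} E_i$. Indeed, $M_n$ is of order $\log n$, whereas $\min_i E_i$ is of order $1/n$, so $\|Z_n\|_\infty = M_n/S_n - 1/n + o(\cdot)$ with the negative part $|E_i/S_n - 1/n|$ never competing on the relevant scale. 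Thus the first step is to show that $\frac{n}{\log n}\|Z_n\|_\infty$ and $\frac{M_n}{\log n}$ are exponentially equivalent at speed $s_n$, i.e. that the error terms coming from replacing $S_n$ by $n$ and from discarding the $-1/n$ shift and the negative coordinates are negligible: one needs $\pr{|S_n/n - 1| > \delta s_n/\log n}$ to decay faster than $e^{-c s_n}$ for every $\delta, c > 0$, which follows from standard exponential Chernoff bounds for $S_n$ since $s_n/\log n \to 0$ forces $\delta s_n/\log n$ to eventually exceed any fixed threshold only trivially — more carefully, $\pr{|S_n - n| > n t} \le e^{-c n t^2}$ for small $t$, and with $t = \delta s_n/\log n \to 0$ this is $e^{-c n \delta^2 s_n^2/(\log n)^2}$, which is $\ll e^{-s_n}$ because $n s_n/(\log n)^2 \to \infty$.

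The second step is to establish the MDP directly for $\left(\frac{\log n}{s_n}\left(\frac{M_n}{\log n} - 1\right)\right)_n$ with speed $s_n$ and rate function $\mathbb{I}$. Since $M_n = \max_i E_i$ and $\pr{M_n \le x} = (1 - e^{-x})^n$, for $x = \log n + y$ with $y$ of order $s_n$ (and $s_n/\log n \to 0$) we have $\pr{M_n \le \log n + y} = (1 - e^{-y}/n)^n \approx \exp(-e^{-y})$, while $\pr{M_n > \log n + y} \approx n e^{-\log n - y} = e^{-y}$ in the relevant regime. Setting $z = y/s_n$ (so $M_n/\log n - 1 = (y + \text{error})/\log n$ and $\frac{\log n}{s_n}(\frac{M_n}{\log n}-1) \approx z$), the upper tail gives $\pr{\frac{\log n}{s_n}(\frac{M_n}{\log n}-1) \ge z} \approx e^{-z s_n}$ for $z > 0$, which yields rate $z$; and for $z < 0$, $\pr{\frac{\log n}{s_n}(\frac{M_n}{\log n}-1) \le z} = \pr{M_n \le \log n + z s_n} \le \exp(-e^{-z s_n}) = \exp(-e^{|z| s_n})$, which decays super-exponentially in $s_n$, giving rate $+\infty$ for $z < 0$. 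One packages these into matching upper and lower bounds for open and closed sets to verify the MDP definition; the lower bound for the upper tail uses the first-order term $n e^{-\log n - z s_n}(1 + o(1))$ in the expansion of $1 - (1-e^{-\log n - z s_n})^n$, valid since $e^{-z s_n - \log n} \cdot n = e^{-z s_n} \to 0$.

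I expect the main obstacle to be the careful bookkeeping in the first step: one must control $\|Z_n\|_\infty$ not just asymptotically but with error terms that are exponentially negligible at speed $s_n$ simultaneously with the replacement of $M_n/S_n$ by $M_n/n$. Concretely, $\frac{n}{\log n}\|Z_n\|_\infty = \frac{n}{\log n}\left|\frac{M_n}{S_n} - \frac1n\right| = \frac{M_n}{\log n}\cdot\frac{n}{S_n} - \frac{1}{\log n}$ on the event that the max coordinate is positive (overwhelmingly likely), and one needs $\frac{n}{S_n} = 1 + O_{\exp}(s_n/\log n)$ in the sense that deviations of this ratio beyond $o(s_n/\log n)$ have probability $o(e^{-C s_n})$ for all $C$; combined with $M_n/\log n$ being $O(1)$ with exponentially good probability (e.g. $M_n \le 2\log n$ except on a set of probability $\le n^{-1}$, and more refined tail bounds where needed), the product expansion $\frac{M_n}{\log n}\cdot\frac{n}{S_n} = \frac{M_n}{\log n} + \frac{M_n}{\log n}\left(\frac{n}{S_n}-1\right)$ has its correction term bounded, with exponentially good probability, by something $o(s_n/\log n)$, hence negligible after multiplying by $\frac{\log n}{s_n}$. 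Once exponential equivalence (in the sense of \cite[Theorem~4.2.13]{} — I will cite the Dembo–Zeitouni reference used elsewhere, or simply prove the needed implication by hand) is in place, Theorem~\ref{thm:moderate} follows from the MDP for $M_n$ established in the second step. The $-1/\log n$ additive shift is harmless since $\frac{\log n}{s_n}\cdot\frac{1}{\log n} = \frac{1}{s_n} \to 0$.
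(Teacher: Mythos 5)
Your plan is sound and would yield a correct proof, but it routes the key technical input differently from the paper. The paper never decouples $M_n$ from $S_n$: it first shows $\pr{\|Z_n\|_\infty\neq M_n/S_n-1/n}\leq e^{-cn}$ (Lemma \ref{lem:equalmaxima}) and then imports Devroye's Lemma 3.2, which gives the precise one-sided tail asymptotics for $\frac{n}{\log n}\frac{M_n}{S_n}-1$ at exactly the moderate scale $a_n\to 0$, $a_n\log n\to\infty$; the choice $a_n=(s_nx\pm 1)/\log n$ then produces the two limits $-x$ and $-\infty$ of Lemma \ref{lem:mdpinterval}. You instead replace $S_n$ by $n$ via an extra exponential-equivalence step (your Chernoff computation $n s_n/(\log n)^2\to\infty$ is correct and the bound on $M_n(\tfrac{n}{S_n}-1)$ goes through with $M_n\leq 2\log n$ off an event of probability $1/n=o(e^{-Cs_n})$), and then read the tail asymptotics of $M_n$ directly off the explicit distribution function $(1-e^{-x})^n$. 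Both are valid; your version is self-contained where the paper leans on a cited black box, at the cost of one additional equivalence step. Two small points to watch when you write it out: (i) the passage from $\|Z_n\|_\infty$ to $M_n/S_n-1/n$ (discarding negative coordinates) should be isolated as its own superexponential estimate rather than waved through as ``overwhelmingly likely'' --- the paper's observation that $\|Z_n\|_\infty\neq T_n$ forces $M_n/S_n<2/n$ is the clean way to do it; (ii) the ``packaging'' of the two one-sided limits into bounds for arbitrary open and closed sets is not automatic and in particular the lower bound for open $U\ni 0$ needs $\pr{X_n\in(-\delta,\delta)}\to 1$, which the paper extracts from the Gumbel limit (Theorem \ref{thm:non-central limit theorem}) and which you can get directly from $(1-e^{-x})^n$; this step deserves to be written out as in the paper's proof of Theorem \ref{thm:moderate}.
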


As a last result, we establish the following large deviations principle for the $\ell_\infty$-norm.

\begin{thmalpha}\label{thm:maxldp}
	The sequence $(\frac{n}{\log n}\Vert Z_n\Vert_{\infty})_{n\in\mathbb{N}} $ satisfies an LDP with speed $ s_n=\log n $ and rate function
	\[\mathbb{I}(z) :=
	    \begin{cases}
	      z-1 &: z\geq 1,\\
	      +\infty &: z<1.
	    \end{cases}
	\]
\end{thmalpha}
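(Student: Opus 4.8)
The plan is to reduce the problem to the large deviations of the maximum of $n$ i.i.d.\ exponentials normalized by $S_n$. First I would use the representation $\|Z_n\|_\infty = \max_{1\le i\le n}\bigl|\tfrac{E_i}{S_n}-\tfrac1n\bigr|$. Since $E_i\ge 0$ and $\tfrac1n S_n\to 1$ a.s., the relevant large-deviation behaviour comes entirely from the upper tail of $\max_i E_i$: the negative part $\tfrac1n-\tfrac{E_i}{S_n}$ is bounded by $\tfrac1n$, so once we are looking at values of order $\tfrac{\log n}{n}$ the max over the absolute values coincides with $\max_i \tfrac{E_i}{S_n}-\tfrac1n$, up to negligible corrections. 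Thus it suffices to prove that $\bigl(\tfrac{n}{\log n}\,M_n/S_n^{(n)}\bigr)$ — where $M_n:=\max_{1\le i\le n}E_i$ and $S_n^{(n)}:=\tfrac1n S_n$ — satisfies the claimed LDP, and then transfer this to $\tfrac{n}{\log n}\|Z_n\|_\infty$ by showing the two sequences are exponentially equivalent at speed $\log n$ (the fluctuations of $S_n/n$ around $1$ are of order $1/\sqrt n$ and concentrate at speed $n \gg \log n$, so they do not affect the LDP).

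The core step is then the LDP for $\tfrac{n}{\log n}M_n$ at speed $\log n$ with rate function $\mathbb I(z)=z-1$ for $z\ge 1$ and $+\infty$ otherwise. This is a direct computation: for $z\ge 0$,
\[
\mathbb P\Bigl[M_n \le \tfrac{z\log n}{n}\Bigr] = \bigl(1-e^{-z\log n/n}\bigr)^n = \bigl(1-n^{-z/n}\bigr)^n,
\]
and I would take logarithms and use $\log(1-n^{-z/n})^n = n\log\!\bigl(1-n^{-z/n}\bigr)\sim -n\cdot n^{-z/n}= -n^{1-z/n}$ as $n\to\infty$. Hence for the lower tail $\mathbb P[\tfrac{n}{\log n}M_n\le z] = \exp(-n^{1-z/n}(1+o(1)))$, and $\tfrac{1}{\log n}\log\mathbb P[\tfrac{n}{\log n}M_n\le z]\to -\infty$ whenever $z<1$ (since $n^{1-z/n}/\log n\to\infty$), while it $\to 0$ for $z>1$; this gives the $+\infty$ part of the rate function and explains the threshold at $1$. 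For the upper tail, $\mathbb P[M_n > \tfrac{z\log n}{n}] = 1-(1-n^{-z/n})^n \sim n^{1-z/n}$ for $z>1$, so $\tfrac1{\log n}\log\mathbb P[\tfrac{n}{\log n}M_n> z]\to 1-z = -(z-1)$, matching $\mathbb I$. Assembling these tail estimates into the lower bound on open sets and upper bound on closed sets (using monotonicity of $M_n$, so that it suffices to control half-lines) yields the LDP; the rate function is convex, lower semicontinuous and has compact sublevel sets $\{z\le a\}=[1,a+1]$, so it is a good rate function, which is what the LDP definition in Section~\ref{sec:LDPMDPprel} requires.

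The main obstacle I anticipate is not the tail asymptotics themselves but making the passage from $M_n/S_n$ to $\|Z_n\|_\infty$ genuinely rigorous at the level of large deviations, i.e.\ establishing exponential equivalence at the (rather slow) speed $s_n=\log n$: one must check that $\mathbb P\bigl[\,\bigl|\tfrac{n}{\log n}\|Z_n\|_\infty - \tfrac{n}{\log n}M_n\bigr| > \delta\,\bigr]$ decays faster than $e^{-c\log n}$ for every $c$, which requires a uniform (in $n$) quantitative bound on the lower deviations of $S_n/n$ below any fixed level and an estimate ruling out that the negative coordinates $\tfrac1n-\tfrac{E_i}{S_n}$ ever dominate — the latter is easy since they are deterministically $\le \tfrac1n = o(\tfrac{\log n}{n})$, and the former follows from Cramér's theorem for $S_n/n$, whose rate is linear in $n$ and hence superlinear in $\log n$. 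Once exponential equivalence is in place, the LDP transfers verbatim by the standard exponential-equivalence lemma, completing the proof.
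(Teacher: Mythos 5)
Your overall strategy coincides with the paper's: reduce $\|Z_n\|_\infty$ to $T_n=M_n/S_n-1/n$, prove an LDP at speed $\log n$ for the suitably normalized maximum of i.i.d.\ exponentials, and transfer it by exponential equivalence (Lemma~\ref{lem:expequivalence}). The only structural difference is that the paper obtains the LDP for $M_n/\log n$ by invoking a general result of Giuliano and Macci (Lemma~\ref{lem:ldpgiulianomacci}, with $-\log\pr{E>x}=x$ regularly varying of index $1$ and $m_n=\log n$), whereas you propose a direct, self-contained tail computation; both are fine, and your observation that $0\le \|Z_n\|_\infty - T_n\le 1/n$ deterministically (because $M_n\ge \bar E_n$ forces $T_n\ge 0$ and the negative coordinates are at most $1/n$) is actually cleaner than the paper's probabilistic Lemma~\ref{lem:equalmaxima}.

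There is, however, a normalization error running through your central computation that, as written, destroys the threshold at $z=1$. The correct object is $M_n/\log n$ (since $\tfrac{n}{\log n}\cdot\tfrac{M_n}{S_n}\approx \tfrac{M_n}{\log n}$), so the relevant event is $\{M_n\le z\log n\}$ with probability $(1-n^{-z})^n$, giving $n\log(1-n^{-z})\sim -n^{1-z}$ and hence the dichotomy at $z=1$. You instead write $\tfrac{n}{\log n}M_n$ and threshold $z\log n/n$, leading to $(1-n^{-z/n})^n$; but $n^{-z/n}\to 1$, so the expansion $\log(1-n^{-z/n})\sim -n^{-z/n}$ is false, and even taken at face value $n^{1-z/n}\sim n$ for every fixed $z$, so no transition at $z=1$ would emerge. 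The fix is purely notational, but it must be made. A second, smaller point: for the exponential equivalence of $\tfrac{n}{\log n}T_n$ and $\tfrac{M_n}{\log n}$, Cram\'er's theorem for $\bar E_n$ alone does not suffice, because the error term is the product $\tfrac{M_n}{\log n}\bigl(\tfrac{n}{S_n}-1\bigr)$ in which $M_n$ grows; one needs to split on an event like $\{M_n\le n^{1/4}\}$ (super-polynomially likely) and then control $|\bar E_n-1|$ at a moderate-deviation scale, exactly as in the paper's Lemma~\ref{lem:expeqmax} using Lemma~\ref{lem:mdpforsums}. You correctly identify this transfer as the delicate step, but the two-scale argument should be spelled out.
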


Let us briefly compare the limit theorems for the case $q=+\infty$. The statement of the non-central limit theorem (Theorem \ref{thm:non-central limit theorem}) implies for every $x>0$ the following behaviour:
\[
\lim_{n\to\infty}\log\, \Pro\left[\log n\Bigl(\frac{n}{\log n}\|Z_n\|_{\infty}-\frac{\log n-1}{\log n}\Bigr)>x\right]=\log(1-\exp(-e^{-x}))
\]
and
\[
\lim_{n\to\infty}\log \,\Pro\left[\log n\Bigl(\frac{n}{\log n}\|Z_n\|_{\infty}-\frac{\log n-1}{\log n}\Bigr)<-x\right]=-e^{x}.
\]
The LDP implies for sets of the form $(x,\infty)$ with $x>0$ that
\[
\lim_{n\to\infty} \frac{1}{\log n}\log\, \Pro\left[\frac{n}{\log n}\|Z_n\|_{\infty}-1>x\right]=-x,
\]
and for sets $(-\infty,-x)$ that
\[
\lim_{n\to\infty} \frac{1}{\log n}\log\, \Pro\left[\frac{n}{\log n}\|Z_n\|_{\infty}-1<-x\right]=-\infty.
\]
The moderate deviations estimates of Theorem \ref{thm:moderate} provide information on the asymptotic likelihood of events for intermediate speeds $s_n$ slower than $\log n$ with corresponding slower convergence speed. In contrast to the MDP for sums of independent and identically distributed random variables, see for example Lemma \ref{lem:mdpforsums} in the next section, the rate function does not seem to reflect the limiting Gumbel distribution.

\medskip

The rest of the paper is organized as follows. In Section \ref{sec:prelim}, we collect some background material on large deviations and introduce the notation we use throughout this paper. Section \ref{sec:poofs} is then devoted to the proofs of Theorems \ref{thm:berryesseen}, \ref{thm:non-central limit theorem}, \ref{thm:moderate}, and \ref{thm:maxldp}. In Section \ref{sec:comparison}, we compare the LDP for the simplex with the one for $\ell_p^n$-balls, in particular the one for the crosspolytope. In the final section, we present an alternative route to the central limit theorem (Corollary \ref{cor:cltqnorm}) using empirical process methods.

\section{Preliminaries}\label{sec:prelim}

\subsection{General notation}

Let $n\in\mathbb{N}$. Given $1\leq q\leq \infty$ and a vector $x=(x_1,\ldots,x_n)\in\mathbb{R}^n$, we write
\[
\norm{x}_q=
\begin{cases}
\Bigl(\sum_{i=1}^n \abs{x_i}^q\Bigr)^{1/q} &: q<\infty,\\
\max_{1\leq i\leq n} \abs{x_i} &: q=\infty.
\end{cases}
\]
We will assume that all random quantities are defined on a common probability space $(\Omega,\Sigma,\mathbb{P})$ and we write $\pr{\,\cdot\,}$ and $\ee{\,\cdot\,}$ for the probability of an event and the expectation of an (integrable) random variable, respectively. For a sequence of independent and identically distributed (i.i.d.) random vectors $(X_i)_{i\in\mathbb{N}}$ we denote by $\bar{X}_n=\frac{1}{n}\sum_{i=1}^n X_i $ the empirical average. Throughout, $E,E_1,E_2,\ldots$ will be independent (standard) exponential random variables having rate one and $\bar{E}_n=\frac{1}{n}\sum_{i=1}^n E_i$. Note that $\pr{\bar{E}_n=0}=0$ and thus we can ignore this event in our analysis. By $\mathcal{N}(\mu,\Sigma)$ we denote the (multivariate) Gaussian distribution with mean $\mu$ and covariance matrix $\Sigma$. If a random variable $N$ is distributed according to $\mathcal{N}(\mu,\Sigma)$, we write $N\sim\mathcal{N}(\mu,\Sigma)$. With $\xrightarrow{d}$ and $\xrightarrow{\mathbb{P}}$ we indicate convergence in distribution and in probability, respectively. We say that a sequence of real-valued random variables $(X_n)_{n\in\mathbb{N}}$ satisfies a central limit theorem (CLT) if there exists a sequence $(a_n)_{n\in\mathbb{N}}$ of real numbers such that $\sqrt{n}(a_nX_n-1)\xrightarrow{d} N\sim \mathcal{N}(0,1)$ as $n\to\infty$. For further background material on asymptotic probability theory consult, for example, DasGupta \cite{D2008}. 

\subsection{Large and Moderate Deviations}\label{sec:LDPMDPprel}

In the following, we recall facts from the theory of large deviations as developed, for example, in \cite{DZ2010}.

A sequence $(X_n)_{n\in\mathbb{N}} $ of real-valued random variables is said to satisfy a large deviations principle (LDP) with speed $ (s_n)_{n\in\mathbb{N}}\subset (0,\infty) $ and rate function $ \mathbb{I}: \mathbb{R}\rightarrow [0,\infty] $ if $\mathbb{I}$ is lower semi-continuous, has compact level sets $\{z\in\mathbb{R}:\mathbb{I}(z)\leq \alpha\}, \alpha\in\mathbb{R},$ and if for all Borel sets $ A\subset \mathbb{R} $,
\[
 -\inf_{z\in A^{\circ}} \mathbb{I}(z)\leq \liminf_{n\to\infty} s_n^{-1}\log\,\pr{X_n\in A}\leq  \limsup_{n\to\infty} s_n^{-1}\log\,\pr{X_n\in A}\leq -\inf_{z\in\bar{A}}\mathbb{I}(z).
\]
Here $A^\circ$ denotes the interior and $\bar{A}$ the closure of $A$. For the empirical average of independent and identically distributed (real-valued) random variables an LDP holds according to Cram\'er's theorem, which we state next.

\begin{lemma}[\textbf{Cram\'ers theorem {\cite[Theorem 2.2.3]{DZ2010}}}]\label{lem:cramer}
Let $ X,X_1,X_2,\ldots $ be i.i.d. real-valued random variables. Assume that the origin is an interior point of the domain of the cumulant generating function $ \Lambda(u)=\log \,\mathbb{E}[\exp(uX)] $. Then the sequence of partial sums $ \bar{X}_n=\frac{1}{n} \sum_{i=1}^n X_i, n\in\mathbb{N},$ satisfies an LDP on $ \mathbb{R} $ with speed $ n $ and rate function $ \Lambda^*$, where $\Lambda^*(z)=\sup_{u\in\mathbb{R}}\bigl(uz-\Lambda(u)\bigr)$ for all $z\in\mathbb{R}$.
\end{lemma}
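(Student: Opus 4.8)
The plan is to follow the classical two-sided recipe for Cram\'er's theorem in dimension one. Write $m:=\ee{X}=\Lambda'(0)$, which is finite because $0$ lies in the interior of the domain $D_\Lambda:=\{u\in\R:\Lambda(u)<\infty\}$. First I would record the elementary convex-analytic facts: $\Lambda$ is convex with $\Lambda(0)=0$; by Jensen's inequality $\Lambda(u)\geq um$ for all $u$, hence $\Lambda^*(m)=0$ and $\Lambda^*\geq0$ everywhere; $\Lambda^*$ is convex and lower semicontinuous as a supremum of affine functions, and it is non-increasing on $(-\infty,m]$ and non-decreasing on $[m,\infty)$. Moreover, since $\Lambda(\pm\eps)<\infty$ for some $\eps>0$, one has $\Lambda^*(z)\geq\eps\abs z-\max\{\Lambda(\eps),\Lambda(-\eps)\}$, so the level sets $\{z:\Lambda^*(z)\leq\alpha\}$ are bounded and (being sublevel sets of a lower semicontinuous function) closed, hence compact. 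This verifies the regularity demanded in the definition of an LDP, and it remains to prove the two LDP bounds.

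For the upper bound the workhorse is the exponential Markov (Chernoff) inequality: for $z\geq m$ and $u\geq0$,
\[
\pr{\bar X_n\geq z}=\pr{e^{u\sum_{i=1}^nX_i}\geq e^{unz}}\leq e^{-unz}\,\ee{e^{uX}}^n=e^{-n(uz-\Lambda(u))},
\]
and optimising over $u\geq0$ gives $\pr{\bar X_n\geq z}\leq e^{-n\Lambda^*(z)}$, because for $z\geq m$ the concave map $u\mapsto uz-\Lambda(u)$ is non-decreasing on $(-\infty,0]$, so its unconstrained supremum $\Lambda^*(z)$ is attained over $u\geq0$. Symmetrically $\pr{\bar X_n\leq z}\leq e^{-n\Lambda^*(z)}$ for $z\leq m$. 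Now let $F\subseteq\R$ be closed. If $m\in F$ then $\inf_{z\in F}\Lambda^*(z)=0$ and the bound is trivial; otherwise set $z_+:=\inf\{z\in F:z>m\}\in(m,\infty]$ and $z_-:=\sup\{z\in F:z<m\}\in[-\infty,m)$, so that $F\subseteq(-\infty,z_-]\cup[z_+,\infty)$, and by the two half-line estimates together with the monotonicity of $\Lambda^*$,
\[
\pr{\bar X_n\in F}\leq e^{-n\Lambda^*(z_-)}+e^{-n\Lambda^*(z_+)}\leq 2\,e^{-n\min\{\Lambda^*(z_-),\Lambda^*(z_+)\}}=2\,e^{-n\inf_{z\in F}\Lambda^*(z)} .
\]
Taking $n^{-1}\log$ and $\limsup_{n\to\infty}$ yields the upper bound.

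For the lower bound it suffices to prove that for every $z\in\R$ with $\Lambda^*(z)<\infty$ and every $\delta>0$ one has $\liminf_{n\to\infty}n^{-1}\log\pr{\abs{\bar X_n-z}<\delta}\geq-\Lambda^*(z)$; indeed, for an open set $G$ and $z\in G$ one picks $\delta$ with $(z-\delta,z+\delta)\subseteq G$, so $\pr{\bar X_n\in G}\geq\pr{\abs{\bar X_n-z}<\delta}$, and taking the supremum over $z\in G$ then produces $-\inf_{z\in G}\Lambda^*(z)$. The main case is when $z=\Lambda'(\tau)$ for some $\tau$ in the interior of $D_\Lambda$, so that $\Lambda^*(z)=\tau z-\Lambda(\tau)$. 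Here I introduce the Cram\'er tilt $\widetilde\Pro$ with $\frac{\dint\widetilde\Pro}{\dint\Pro}(x)=e^{\tau x-\Lambda(\tau)}$, under which $X_1,X_2,\ldots$ remain i.i.d.\ but now have mean $\Lambda'(\tau)=z$ (and still possess exponential moments, so the weak law of large numbers applies). Changing measure and using $\sum_{i=1}^nX_i=n\bar X_n\in(n(z-\delta),n(z+\delta))$ on the relevant event,
\[
\pr{\abs{\bar X_n-z}<\delta}=\widetilde\E\Bigl[e^{-\tau\sum_{i=1}^nX_i+n\Lambda(\tau)}\,\mathbf{1}_{\{\abs{\bar X_n-z}<\delta\}}\Bigr]\geq e^{-n(\Lambda^*(z)+\abs\tau\,\delta)}\,\widetilde\Pro\bigl[\abs{\bar X_n-z}<\delta\bigr],
\]
and since $\widetilde\Pro[\abs{\bar X_n-z}<\delta]\to1$ we obtain $\liminf_n n^{-1}\log\pr{\abs{\bar X_n-z}<\delta}\geq-\Lambda^*(z)-\abs\tau\,\delta$; letting $\delta\downarrow0$ (only the right-hand side depends on $\delta$) completes this case.

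What remains are the boundary values of $z$ with $\Lambda^*(z)<\infty$ that are not of the form $\Lambda'(\tau)$ for an interior $\tau$: an endpoint of the support of $X$, or a point in the ``non-steep'' regime where $\Lambda$ is finite up to and including the boundary of $D_\Lambda$ while $\Lambda'$ fails to reach $z$. The endpoint case is handled directly: if $z=\operatorname{essinf}X$ is an atom of mass $p=\pr{X=z}\in(0,1]$, then $\Lambda^*(z)=-\log p$ and $\pr{\abs{\bar X_n-z}<\delta}\geq p^n=e^{-n\Lambda^*(z)}$; if $z=\operatorname{essinf}X$ is not an atom then $\Lambda^*(z)=\infty$ and there is nothing to prove; the right endpoint and points strictly outside the support are symmetric or trivial. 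The non-steep case is the genuinely delicate one: tilting at the finite boundary point of $D_\Lambda$ gives the $X_i$ a mean strictly below $z$, but the tilted law has only a sub-exponential (polynomial) right tail, so a ``one big jump'' argument shows that $\widetilde\Pro[\abs{\bar X_n-z}<\delta]$ decays only sub-exponentially, which still suffices to recover $\liminf_n n^{-1}\log\pr{\abs{\bar X_n-z}<\delta}\geq-\Lambda^*(z)$. I expect precisely this non-steep/boundary analysis to be the main obstacle, all remaining steps being routine Chernoff bounding and convex analysis; since the statement is entirely classical one may, of course, also simply invoke \cite[Theorem~2.2.3]{DZ2010}.
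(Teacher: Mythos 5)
The paper does not prove this lemma at all: it is imported verbatim from Dembo and Zeitouni \cite[Theorem 2.2.3]{DZ2010}, so the only benchmark is the classical textbook argument, and your outline is exactly that argument. The convex-analytic preliminaries (goodness of $\Lambda^*$ from $\Lambda(\pm\eps)<\infty$), the Chernoff bound for half-lines combined with the monotonicity of $\Lambda^*$ on either side of the mean to treat arbitrary closed sets, and the exponential tilting plus weak law of large numbers for the lower bound at points $z=\Lambda'(\tau)$ with $\tau$ interior to the domain, are all correct and are the same steps as in the cited proof.

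The one genuine gap is your treatment of the non-steep boundary case of the lower bound. If $\eta$ is a finite endpoint of the domain of $\Lambda$ with $\Lambda(\eta)<\infty$ and $m_\eta:=\Lambda'(\eta^-)<z$, the tilted law $\dint\widetilde{\mathbb{P}}\propto e^{\eta x}\dint\mathbb{P}$ merely fails to have exponential moments; it need not have a polynomial right tail. Absence of exponential moments only forces $\widetilde{\mathbb{P}}[X>x_k]\geq e^{-\eps x_k}$ along \emph{some} sequence $x_k\to\infty$ (for each fixed $\eps$), not at the specific locations of order $n(z-m_\eta)$ that a one-big-jump bound requires for every large $n$. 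So the claim that $\widetilde{\mathbb{P}}\bigl[|\bar X_n-z|<\delta\bigr]$ decays sub-exponentially, and hence the liminf bound, does not follow from the sketch as written. The standard repair---and what the cited proof in effect does---is truncation: condition $X$ on $[-M,M]$, for which the truncated cumulant generating function is finite everywhere, so the supremum defining the truncated rate at $z$ is attained at an interior point and the corresponding tilt has mean exactly $z$; prove the ball lower bound there and then let $M\to\infty$, checking that the truncated rate converges to $\Lambda^*(z)$ (the atom-at-the-edge-of-support case is handled separately, as you do). For the way the lemma is used in this paper---standard exponential variables, whose cumulant generating function is steep on its domain---the delicate case never arises; for the general statement you either need the truncation step or, as you note, one simply cites \cite[Theorem 2.2.3]{DZ2010}, which is precisely what the paper does.
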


It is often useful to transfer an LDP for a sequence of random variables to another such sequence when they do not differ too much from each other. The following lemma provides a condition (called exponential equivalence) under which such an attempt is possible.

\begin{lemma}[\textbf{Exponential equivalence {\cite[Theorem 4.2.13]{DZ2010}}}]\label{lem:expequivalence}
	Let $ (X_n)_{n\in\mathbb{N}} $ and $(Y_n)_{n\in\mathbb{N}} $ be two sequences of real-valued random variables and assume that $ (X_n)_{n\in\mathbb{N}} $ satisfies an LDP with speed $ s_n $ and rate function $ \mathbb{I} $. If $ (X_n)_{n\in\mathbb{N}} $ and $ (Y_n)_{n\in\mathbb{N}} $ are exponentially equivalent at speed $s_n$, i.e., we have for any $ \delta>0 $ that
	\[
	 \limsup_{n\to\infty} s_n^{-1}\log\, \pr{|X_n-Y_n|>\delta}=-\infty,
	\]
	then $ (Y_n)_{n\in\mathbb{N}} $ satisfies an LDP with the same speed and rate function as $ (X_n)_{n\in\mathbb{N}} $.
\end{lemma}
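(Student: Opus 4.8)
The plan is to verify the two defining inequalities of the LDP for $(Y_n)_{n\in\N}$ — the large deviations upper bound on closed sets and the lower bound on open sets — directly from those for $(X_n)_{n\in\N}$, exploiting the fact that $\pr{|X_n-Y_n|>\delta}$ is superexponentially small to move between the two sequences, and then noting that $\mathbb{I}$ is already lower semicontinuous with compact level sets by hypothesis, so nothing further is required. A tool used repeatedly is the elementary fact that for nonnegative sequences $(a_n),(b_n)$,
\[
\limsup_{n\to\infty} s_n^{-1}\log(a_n+b_n)=\max\Bigl\{\limsup_{n\to\infty} s_n^{-1}\log a_n,\ \limsup_{n\to\infty} s_n^{-1}\log b_n\Bigr\},
\]
which follows from $\max\{a_n,b_n\}\le a_n+b_n\le 2\max\{a_n,b_n\}$ and $s_n^{-1}\log 2\to 0$.

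\textbf{Upper bound on closed sets.} Fix a closed set $F\subseteq\R$ and $\delta>0$, and set $F^\delta:=\{z\in\R:\operatorname{dist}(z,F)\le\delta\}$, which is again closed. If $Y_n\in F$ and $|X_n-Y_n|\le\delta$ then $X_n\in F^\delta$, whence $\pr{Y_n\in F}\le\pr{X_n\in F^\delta}+\pr{|X_n-Y_n|>\delta}$. Applying the displayed fact, the LDP upper bound for $(X_n)$ on the closed set $F^\delta$, and exponential equivalence (which forces $\limsup_n s_n^{-1}\log\pr{|X_n-Y_n|>\delta}=-\infty$), we obtain $\limsup_n s_n^{-1}\log\pr{Y_n\in F}\le-\inf_{z\in F^\delta}\mathbb{I}(z)$. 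Now let $\delta\downarrow 0$, using that $\inf_{F^\delta}\mathbb{I}\uparrow\inf_{F}\mathbb{I}$: if $\inf_F\mathbb{I}=0$ this is clear, and otherwise for any $\alpha<\inf_F\mathbb{I}$ the level set $\{\mathbb{I}\le\alpha\}$ is compact and disjoint from the closed set $F$, hence at positive distance $\delta_0$ from $F$, so $F^\delta\cap\{\mathbb{I}\le\alpha\}=\emptyset$ for $\delta<\delta_0$ and thus $\inf_{F^\delta}\mathbb{I}\ge\alpha$; letting $\alpha\uparrow\inf_F\mathbb{I}$ gives the claim and therefore $\limsup_n s_n^{-1}\log\pr{Y_n\in F}\le-\inf_{z\in F}\mathbb{I}(z)$.

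\textbf{Lower bound on open sets.} Fix an open set $G\subseteq\R$. If $\mathbb{I}\equiv+\infty$ on $G$ there is nothing to prove, so fix $z\in G$ with $\mathbb{I}(z)<\infty$ and choose $\delta>0$ with the open ball $B(z,\delta)\subseteq G$. If $X_n\in B(z,\delta/2)$ and $|X_n-Y_n|\le\delta/2$ then $Y_n\in B(z,\delta)\subseteq G$, so $\pr{Y_n\in G}\ge\pr{X_n\in B(z,\delta/2)}-\pr{|X_n-Y_n|>\delta/2}$. The LDP lower bound for $(X_n)$ gives $\liminf_n s_n^{-1}\log\pr{X_n\in B(z,\delta/2)}\ge-\mathbb{I}(z)>-\infty$, whereas $\pr{|X_n-Y_n|>\delta/2}$ decays superexponentially; hence for all large $n$ the subtracted probability is at most half of $\pr{X_n\in B(z,\delta/2)}$, so $\pr{Y_n\in G}\ge\frac12\pr{X_n\in B(z,\delta/2)}$, and since $s_n^{-1}\log\frac12\to0$ we get $\liminf_n s_n^{-1}\log\pr{Y_n\in G}\ge-\mathbb{I}(z)$. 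Taking the supremum over such $z$ yields $\liminf_n s_n^{-1}\log\pr{Y_n\in G}\ge-\inf_{z\in G}\mathbb{I}(z)$. Combined with the upper bound and the assumed properties of $\mathbb{I}$, this is exactly the LDP for $(Y_n)$ at speed $s_n$ with rate function $\mathbb{I}$.

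\textbf{Main obstacle.} The only genuinely subtle step is the $\delta\downarrow0$ passage in the upper bound: the convergence $\inf_{F^\delta}\mathbb{I}\to\inf_F\mathbb{I}$ may fail if $\mathbb{I}$ lacks compact level sets, so this is precisely where the good-rate-function hypothesis is used. Everything else is routine manipulation of the $\limsup$ and $\liminf$ of logarithms together with a union bound.
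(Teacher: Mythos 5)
Your proof is correct. The paper offers no argument of its own for this lemma---it is quoted verbatim from \cite[Theorem 4.2.13]{DZ2010}---and your argument is essentially the standard textbook proof of that result: bound $\pr{Y_n\in F}$ by $\pr{X_n\in F^\delta}$ plus the superexponentially small equivalence term, use compactness of the level sets of $\mathbb{I}$ to obtain $\inf_{F^\delta}\mathbb{I}\uparrow\inf_{F}\mathbb{I}$ as $\delta\downarrow 0$, and work on small balls inside an open set for the lower bound, so the good-rate-function hypothesis enters exactly where you say it does. The only implicit assumption is $s_n\to\infty$ (used when discarding $s_n^{-1}\log 2$ and when absorbing $\pr{|X_n-Y_n|>\delta/2}$ into half of $\pr{X_n\in B(z,\delta/2)}$), which holds for every speed occurring in the paper.
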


A moderate deviations principle (MDP) is formally of the same nature as an LDP but operates on the scale between a limit theorem, such as a CLT, and a statement about convergence in probability, such as a law of large numbers. We shall need the following result for moderate deviations of the empirical average of i.i.d. random variables.

\begin{lemma}[\textbf{Moderate deviations {\cite[Theorem 3.7.1]{DZ2010}}}]\label{lem:mdpforsums}
Let $ X,X_1,X_2,\ldots $ be i.i.d. real-valued random variables with $\ee{X}=\mu$ and $\Var X=\sigma^2>0$. Assume that the origin is an interior point of the domain of the cumulant generating function $ \Lambda(u)=\log \,\mathbb{E}\exp(uX)$. Fix a sequence $(a_n)_{n\in\mathbb{N}}$ with $a_n\to 0$ and $na_n\to \infty$. Then, the sequence $\sqrt{na_n}(\bar{X}_n-\mu)$ satisfies an LDP on $ \mathbb{R} $ with speed $ a_n^{-1} $ and rate function $\mathbb{I}(z)=\frac{z^2}{2\sigma^2}.$
\end{lemma}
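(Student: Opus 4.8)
\textbf{Proof strategy for Theorem \ref{thm:maxldp}.}

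The plan is to reduce the statement to an LDP for the maximum of i.i.d.\ exponential random variables and then transfer it back to $\|Z_n\|_\infty$ via exponential equivalence. Write $M_n := \max_{1\le i\le n} E_i$ for the maximum of the first $n$ i.i.d.\ standard exponentials. Since $Z_n = (E_1/S_n - 1/n, \dots, E_n/S_n - 1/n)$ and $S_n = n\bar E_n$, we have
\[
\|Z_n\|_\infty = \max_{1\le i\le n}\Bigl|\frac{E_i}{S_n} - \frac1n\Bigr|.
\]
The positive part of this maximum is $\frac{M_n}{S_n} - \frac1n$ (attained at the index realizing $M_n$, provided $M_n > S_n/n = \bar E_n$, which happens with overwhelming probability), while the negative part is at most $\frac1n$ in absolute value. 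Since $\frac{n}{\log n}\cdot\frac1n = \frac{1}{\log n}\to 0$, the negative part is negligible on the scale we care about, and the leading behaviour of $\frac{n}{\log n}\|Z_n\|_\infty$ is governed by $\frac{n}{\log n}\bigl(\frac{M_n}{S_n} - \frac1n\bigr) = \frac{1}{\log n}\bigl(\frac{n M_n}{S_n} - 1\bigr)$. By the strong law of large numbers $\bar E_n = S_n/n \to 1$ a.s., and one checks that the fluctuations of $\bar E_n$ around $1$ are exponentially negligible at speed $\log n$ (indeed $\bar E_n$ satisfies a Cram\'er LDP at speed $n \gg \log n$, so $\pr{|\bar E_n - 1| > \delta}$ decays faster than any $e^{-c\log n}$). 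Hence $\frac{n}{\log n}\|Z_n\|_\infty$ is exponentially equivalent at speed $\log n$ to $\frac{1}{\log n} M_n$.

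It therefore suffices, by Lemma \ref{lem:expequivalence}, to prove that $\bigl(\frac{1}{\log n}M_n\bigr)_{n\in\N}$ satisfies an LDP at speed $\log n$ with rate function $\mathbb I$ as stated. This is a direct computation using the exact distribution of $M_n$: for $t \ge 0$,
\[
\pr{M_n \le t} = (1 - e^{-t})^n, \qquad \pr{M_n > t} = 1 - (1-e^{-t})^n.
\]
For the upper tail, take $z > 1$ and $t = z\log n$: then $\pr{M_n > z\log n} = 1 - (1 - n^{-z})^n \sim n^{1-z}$, so $\frac{1}{\log n}\log \pr{M_n/\log n > z} \to 1 - z = -(z-1)$. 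For $z \in (0,1)$, $\pr{M_n/\log n > z} = 1 - (1-n^{-z})^n \to 1$, giving rate $0$. For the lower tail, $\pr{M_n \le z\log n} = (1 - n^{-z})^n$; for $z > 1$ this tends to $1$ (rate $0$), while for $z < 1$ it behaves like $\exp(-n^{1-z}) \to 0$ super-exponentially fast in $\log n$, giving rate $+\infty$, consistent with $\mathbb I(z) = +\infty$ for $z<1$. Assembling these pointwise estimates into the required $\liminf$/$\limsup$ bounds over arbitrary Borel sets is routine: the rate function $\mathbb I(z) = (z-1)\mathbf 1_{\{z\ge1\}} + \infty\cdot\mathbf 1_{\{z<1\}}$ is lower semicontinuous with compact level sets $\{\mathbb I \le \alpha\} = [1, 1+\alpha]$, and the upper/lower bounds follow by splitting an arbitrary set near the point $z=1$ where $\mathbb I$ vanishes and using monotonicity of the tails elsewhere. (Alternatively one can invoke the G\"artner–Ellis theorem or a contraction-type argument, but the explicit computation is cleanest here.)

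The main obstacle is making the reduction to $M_n$ fully rigorous at the level of exponential equivalence: one must control simultaneously (i) the event that the maximum of $|E_i/S_n - 1/n|$ is \emph{not} achieved by a positive coordinate, which requires $M_n \le \bar E_n$ and has probability at most $\pr{M_n \le 2}$-type bounds that are exponentially small at speed $\log n$; and (ii) the deviations of the normalizing denominator $S_n/n$ from $1$, uniformly enough that multiplying $M_n$ by $n/S_n$ versus by $1$ changes $\frac{1}{\log n}(\cdot)$ by less than any $\delta$ except on an event of probability $e^{-\omega(\log n)}$. Both are handled by noting that any fixed-order deviation of $\bar E_n$ costs probability $e^{-cn}$, which is $o(e^{-C\log n})$ for every $C$; the bookkeeping is elementary but needs to be done carefully to confirm the hypothesis of Lemma \ref{lem:expequivalence}.
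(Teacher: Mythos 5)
Your proposal does not address the statement at hand. The statement is Lemma \ref{lem:mdpforsums}, the moderate deviations principle for empirical averages of i.i.d.\ random variables: under the assumption that $0$ is interior to the domain of $\Lambda(u)=\log\mathbb{E}\exp(uX)$, the sequence $\sqrt{na_n}(\bar X_n-\mu)$ satisfies an LDP with speed $a_n^{-1}$ and rate $z^2/(2\sigma^2)$. What you have written is instead a proof sketch for Theorem \ref{thm:maxldp}, the LDP for $\frac{n}{\log n}\|Z_n\|_\infty$; nowhere do you treat the quantity $\sqrt{na_n}(\bar X_n-\mu)$, the general scaling $a_n\to 0$, $na_n\to\infty$, or the Gaussian-type rate function. (For what it is worth, your sketch for Theorem \ref{thm:maxldp} is close in spirit to the paper's argument there --- exponential equivalence of $\frac{n}{\log n}\|Z_n\|_\infty$ with $\frac{M_n}{\log n}$ via Cram\'er-scale control of $\bar E_n$, then an LDP for the maximum, which the paper obtains from Lemma \ref{lem:ldpgiulianomacci} rather than by your direct computation with $(1-e^{-t})^n$ --- but that is not the statement you were asked to prove.)

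In the paper, Lemma \ref{lem:mdpforsums} is not proved at all; it is quoted from \cite[Theorem 3.7.1]{DZ2010}. If you wanted to supply a proof, the standard route is the G\"artner--Ellis theorem at speed $a_n^{-1}$: for $Y_n=\sqrt{na_n}(\bar X_n-\mu)$ one computes
\[
a_n\log\mathbb{E}\exp\bigl(a_n^{-1}\lambda Y_n\bigr)
= na_n\,\log\mathbb{E}\exp\Bigl(\tfrac{\lambda}{\sqrt{na_n}}(X-\mu)\Bigr)
\longrightarrow \frac{\sigma^2\lambda^2}{2},
\]
using that $\lambda/\sqrt{na_n}\to 0$ lies in the interior of the domain of $\Lambda$ for large $n$ and that $\log\mathbb{E}\exp(t(X-\mu))=\tfrac{\sigma^2t^2}{2}+o(t^2)$ as $t\to 0$; the Legendre transform of $\lambda\mapsto \sigma^2\lambda^2/2$ is $z^2/(2\sigma^2)$, and finiteness and differentiability of the limit make G\"artner--Ellis applicable. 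None of this machinery appears in your proposal, so as a proof of the stated lemma it has a complete gap: it proves a different theorem.
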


We provide some explanation. Under the assumptions of the previous lemma, by the (usual) central limit theorem, the CLT $\sqrt{n}(\bar{X}_n-\mu)\xrightarrow{d} N\sim  \mathcal{N}(0,\sigma^2)$ holds, as $n\to\infty$. In Lemma \ref{lem:mdpforsums}, the exponent of the density of $N$ is reflected in the rate function, and as the prefactor $\sqrt{na_n}$ becomes closer to $\sqrt{n}$, the speed $a_n^{-1}$ decreases. It must be stressed, however, that in general, as discussed in the Introduction, the rate function in an MDP may or may not reflect the limiting distribution.

\section{The proofs}\label{sec:poofs}

We shall now present the proofs of Theorems \ref{thm:berryesseen}, \ref{thm:non-central limit theorem}, \ref{thm:moderate}, and \ref{thm:maxldp}, and start with the Berry-Esseen-type central limit theorem followed by the non-central limit theorem together with the moderate and large deviations principles when $q=\infty$.

\subsection{Proof of the Berry-Esseen-CLT}

The general philosophy of the proof is similar to the one of Johnston and Prochno \cite{JP2019}. However, as already explained above, we shall use a connection to the asymptotic theory of sums of spacings.

The following lemma is a version of \cite[Lemma 4.1]{APT2019} (see also \cite[Lemma 2.8]{JP2019}).

\begin{lemma}\label{lem:gaussiantriangle}
For any real-valued random variables $X,Y$ and any $\varepsilon>0$ it holds that
\[
\sup_{x\in\R}\big| \mathbb{P}[Y\leq x]-\mathbb{P}[N\leq x]\big| \leq  \sup_{x\in\R}\vert \mathbb{P}[X\leq x]-\mathbb{P}[N\leq x]\vert +\mathbb{P}[\vert X-Y\vert>\varepsilon]+\frac{\varepsilon}{\sqrt{2\pi}},
\]
where $N$ is a standard Gaussian random variable. 
\end{lemma}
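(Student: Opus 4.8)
The plan is to establish the asserted estimate for each fixed $x\in\R$ separately (and then take the supremum) by a standard smoothing argument: compare the distribution functions of $X$ and $Y$ after allowing a shift of size $\varepsilon$ in the argument, and absorb the cost of that shift using the boundedness of the standard Gaussian density.

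First I would fix $x\in\R$ and split according to whether $|X-Y|\le\varepsilon$. On the event $\{|X-Y|\le\varepsilon\}$ one has the inclusion $\{Y\le x\}\subseteq\{X\le x+\varepsilon\}$, whence
\[
\mathbb{P}[Y\le x]\le \mathbb{P}[X\le x+\varepsilon]+\mathbb{P}[|X-Y|>\varepsilon].
\]
Next I would replace $\mathbb{P}[X\le x+\varepsilon]$ by $\mathbb{P}[N\le x+\varepsilon]$ at the cost of $\sup_{t\in\R}\vert\mathbb{P}[X\le t]-\mathbb{P}[N\le t]\vert$, and then use that the standard normal density is bounded by $1/\sqrt{2\pi}$, so that
\[
\mathbb{P}[N\le x+\varepsilon]-\mathbb{P}[N\le x]=\int_{x}^{x+\varepsilon}\frac{1}{\sqrt{2\pi}}\,e^{-t^2/2}\dif t\le \frac{\varepsilon}{\sqrt{2\pi}}.
\]
Chaining these three inequalities gives the upper bound $\mathbb{P}[Y\le x]-\mathbb{P}[N\le x]\le \sup_{t\in\R}\vert\mathbb{P}[X\le t]-\mathbb{P}[N\le t]\vert+\mathbb{P}[|X-Y|>\varepsilon]+\varepsilon/\sqrt{2\pi}$.

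For the matching lower bound I would argue symmetrically: on $\{|X-Y|\le\varepsilon\}$ one has $\{X\le x-\varepsilon\}\subseteq\{Y\le x\}$, hence $\mathbb{P}[Y\le x]\ge \mathbb{P}[X\le x-\varepsilon]-\mathbb{P}[|X-Y|>\varepsilon]$; applying the same Berry--Esseen-type comparison of $X$ with $N$ and the density bound for the shift from $x-\varepsilon$ to $x$ yields $\mathbb{P}[Y\le x]-\mathbb{P}[N\le x]\ge -\sup_{t\in\R}\vert\mathbb{P}[X\le t]-\mathbb{P}[N\le t]\vert-\mathbb{P}[|X-Y|>\varepsilon]-\varepsilon/\sqrt{2\pi}$. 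Combining the two one-sided estimates gives $\vert\mathbb{P}[Y\le x]-\mathbb{P}[N\le x]\vert$ bounded by the right-hand side of the claim for each $x$, and taking $\sup_{x\in\R}$ completes the proof.

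There is no real obstacle here; the argument is entirely routine. The only points that need a bit of care are getting the direction of the set inclusions right under the event $\{|X-Y|\le\varepsilon\}$, and the elementary observation $\sup_{t\in\R}\frac{1}{\sqrt{2\pi}}e^{-t^2/2}=\frac{1}{\sqrt{2\pi}}$, which bounds the probability that $N$ lies in any interval of length $\varepsilon$ by $\varepsilon/\sqrt{2\pi}$.
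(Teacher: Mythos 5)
Your argument is correct and is essentially the same as the paper's: the same decomposition on the event $\{|X-Y|\le\varepsilon\}$, the same shift of the argument by $\varepsilon$, and the same bound $\mathbb{P}[N\le x+\varepsilon]-\mathbb{P}[N\le x]\le \varepsilon/\sqrt{2\pi}$ from the boundedness of the Gaussian density. The only difference is that you write out the lower one-sided estimate explicitly, while the paper proves the upper bound and summarizes the symmetric case with ``taking absolute values.''
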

\begin{proof}
Let $x\in\R$ and $\varepsilon>0$. Then,
\begin{align*}
\mathbb{P}[Y\leq x]-\mathbb{P}[N\leq x]&\leq \mathbb{P}[Y\leq x, \vert X-Y\vert\leq \varepsilon] + \mathbb{P}[\vert X-Y\vert>\varepsilon]-\mathbb{P}[N\leq x]\\
&\leq \mathbb{P}[X\leq x+\varepsilon]+ \mathbb{P}[\vert X-Y\vert>\varepsilon] -\mathbb{P}[N\leq x]\\
&=\mathbb{P}[X\leq x+\varepsilon]-\mathbb{P}[N\leq x+\varepsilon]+\mathbb{P}[\vert X-Y\vert>\varepsilon]\\
&+\mathbb{P}[N\leq x+\varepsilon]-\mathbb{P}[N\leq x].
\end{align*}
Using that $\mathbb{P}[N\leq x+\varepsilon]-\mathbb{P}[N\leq x]\leq \varepsilon/\sqrt{2\pi}$ for all $x\in\R$, taking absolute values, and forming the supremum completes the proof.
\end{proof}

We shall need another lemma before we can derive the proof of Theorem \ref{thm:berryesseen} from a Berry-Esseen bound for sums of spacings to be stated subsequently. The lemma shows that similar to CLTs also Berry-Esseen-type bounds can be transfered by ``nice'' functions.

\begin{lemma}\label{lem:berryesseendeltamethod}
Let $(X_n)_{n\in\N}$ be a sequence of real-valued random variables. Suppose that there exist constants $\mu\in\mathbb{R}$ and $\sigma>0$ such that the Berry-Esseen-type bound
\[
\sup_{x\in\R}\Bigg|\mathbb{P}\left[\sqrt{n}\left(\frac{X_n-\mu}{\sigma}\right)\leq x\right]-\mathbb{P}[N\leq x]\Bigg|\leq a_n
\]
holds for some sequence $(a_n)_{n\in\mathbb{N}}$ and all $n\in\mathbb{N}$, where $N$ is a standard Gaussian random variable. If $g:\R\to\R$ is twice continuously differentiable at $\mu$ with $g'(\mu)>0$, then for some constant $C>0$ and all $n\in\mathbb{N}$,
\[
\sup_{x\in\R}\Bigg|\mathbb{P}\left[\sqrt{n}\left(\frac{g(X_n)-g(\mu)}{g'(\mu)\sigma}\right)\leq x\right]-\mathbb{P}[N\leq x]\Bigg|\leq C\max\left\{a_n,\frac{\log n}{\sqrt{n}}\right\}.
\]
\end{lemma}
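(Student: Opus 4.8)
The plan is to exploit the second-order Taylor expansion of $g$ at $\mu$ together with Lemma \ref{lem:gaussiantriangle}, with the role of $X$ there played by the linearization $g'(\mu)(X_n-\mu)/\big(g'(\mu)\sigma\big) = (X_n-\mu)/\sigma$ (rescaled by $\sqrt n$) and the role of $Y$ played by $\sqrt{n}(g(X_n)-g(\mu))/(g'(\mu)\sigma)$. Since $g$ is twice continuously differentiable at $\mu$, there are $\delta_0>0$ and $M>0$ such that $|g(t)-g(\mu)-g'(\mu)(t-\mu)|\leq M(t-\mu)^2$ for all $t$ with $|t-\mu|\leq\delta_0$. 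Set $Y_n:=\sqrt n(g(X_n)-g(\mu))/(g'(\mu)\sigma)$ and $X_n':=\sqrt n (X_n-\mu)/\sigma$. On the event $\{|X_n-\mu|\leq\delta_0\}$ we then have
\[
|Y_n-X_n'|\leq \frac{\sqrt n\, M}{g'(\mu)\sigma}(X_n-\mu)^2 = \frac{M\sigma}{g'(\mu)}\cdot\frac{(X_n')^2}{\sqrt n}.
\]
So $|Y_n-X_n'|$ is small precisely when $|X_n'|$ is not too large, and we need to choose the threshold $\eps$ in Lemma \ref{lem:gaussiantriangle} of order $\log n/\sqrt n$, which forces $|X_n'|$ to be of order $\sqrt{\log n}$.

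Concretely, I would fix a constant $K$ (to be chosen, $K^2\ge$ some absolute constant) and consider the event $A_n:=\{|X_n'|\leq K\sqrt{\log n}\}$. On $A_n\cap\{|X_n-\mu|\le\delta_0\}$ the displayed bound gives $|Y_n-X_n'|\leq \frac{M\sigma}{g'(\mu)}\cdot\frac{K^2\log n}{\sqrt n}=:\eps_n$, a quantity of order $\log n/\sqrt n$. The contribution of $\{|X_n'|>K\sqrt{\log n}\}$ is controlled by the hypothesis: $\mathbb P[|X_n'|>K\sqrt{\log n}]\le \mathbb P[|N|>K\sqrt{\log n}]+2a_n$, and the Gaussian tail $\mathbb P[|N|>K\sqrt{\log n}]\le e^{-K^2(\log n)/2}=n^{-K^2/2}$ is $\le \log n/\sqrt n$ once $K^2\ge 1$ (and $n$ large; small $n$ are absorbed into the constant since the left side of the claimed inequality is always $\le 1$). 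The contribution of $\{|X_n-\mu|>\delta_0\}$, which is contained in $\{|X_n'|>\delta_0\sqrt n/\sigma\}$, is handled the same way and is negligible. Now apply Lemma \ref{lem:gaussiantriangle} with $X=X_n'$, $Y=Y_n$, and $\eps=\eps_n$: the first term on its right-hand side is $\le a_n$ by assumption, the middle term $\mathbb P[|X_n'-Y_n|>\eps_n]$ is bounded by $\mathbb P[|X_n'|>K\sqrt{\log n}]+\mathbb P[|X_n-\mu|>\delta_0]\lesssim a_n+\log n/\sqrt n$ by the above, and the last term is $\eps_n/\sqrt{2\pi}\lesssim \log n/\sqrt n$. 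Summing gives $\sup_x|\mathbb P[Y_n\le x]-\mathbb P[N\le x]|\le C\max\{a_n,\log n/\sqrt n\}$ for a suitable $C=C(g,\mu,\sigma)$.

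The main obstacle, such as it is, is bookkeeping rather than anything deep: one must make sure the threshold $\eps_n$ is genuinely of order $\log n/\sqrt n$ and simultaneously small enough that the Taylor remainder is dominated on the high-probability event, and one must handle the regime of small $n$ (where $\sqrt{\log n}$ or $\delta_0\sqrt n/\sigma$ need not exceed the relevant scales) by the trivial observation that the quantity being bounded never exceeds $1$, so any finite set of $n$ can be swallowed by enlarging $C$. One should also note that $g'(\mu)>0$ is used only to ensure the normalization $g'(\mu)\sigma>0$ makes sense and preserves the direction of the inequality defining the CDF; no monotonicity of $g$ itself is needed because the argument is entirely local around $\mu$ and the event $\{|X_n-\mu|>\delta_0\}$ is shown to be exponentially (indeed polynomially, to the needed order) negligible.
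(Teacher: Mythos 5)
Your proposal is correct and follows essentially the same route as the paper's proof: Taylor-expand $g$ at $\mu$ to bound $|Y_n-X_n'|$ by a quadratic term, choose the threshold in Lemma \ref{lem:gaussiantriangle} of order $\log n/\sqrt n$, and transfer the Gaussian tail bound to $X_n'$ via the assumed Berry--Esseen estimate to control both the event $\{|X_n'|>K\sqrt{\log n}\}$ and the event $\{|X_n-\mu|>\delta_0\}$. The only difference is cosmetic (you fix the truncation level for $|X_n'|$ first and deduce $\eps_n$, while the paper fixes $\eps$ first and deduces the truncation level), so no further comparison is needed.
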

\begin{proof}
Set $\xi_n:=\sqrt{n}(X_n-\mu)/\sigma$ and $\zeta_n:=\sqrt{n}(g(X_n)-g(\mu))/g'(\mu)\sigma$. We use Lemma \ref{lem:gaussiantriangle} to infer, for each $n\in\mathbb{N}$ and every $\varepsilon>0$,
\begin{equation}\label{eq:gaussiantriangleapplied}
\sup_{x\in\R}\vert \mathbb{P}[\zeta_n\leq x]-\mathbb{P}[N\leq x]\vert \leq  \sup_{x\in\R}\vert \mathbb{P}[\xi_n \leq x]-\mathbb{P}[N\leq x]\vert +\mathbb{P}[\vert \xi_n-\zeta_n\vert>\varepsilon]+\frac{\varepsilon}{\sqrt{2\pi}}.
\end{equation}
Fix $n\in\mathbb{N}$. We will estimate $\mathbb{P}[\vert \xi_n-\zeta_n\vert>\varepsilon]$ and choose $\varepsilon$ suitably.

Making use of the Taylor expansion of $g$ at $\mu$ yields that there exists some $\delta>0$ such that, for all $x\in (\mu-\delta,\mu+\delta)$, we have
\[
g(x)-g(\mu)=g'(\mu)(x-\mu)+\Phi(x-\mu),
\]
where $\Phi:\R\to\R$ is a function such that, for all $x\in (\mu-\delta,\mu+\delta)$, we have $\vert \Phi(x)\vert \leq M_g\vert x-\mu\vert^2$ for $M_g=\sup_{x\in (\mu-\delta,\mu+\delta)}\vert g''(x)\vert/2$. Thus, if $\vert X_n-\mu\vert<\delta$,
\[
\abs{g(X_n)-g(\mu)-g'(\mu)(X_n-\mu)}\leq M_g \abs{X_n-\mu}^2.
\]
We get, after division by $g'(\mu)\sigma$ and multiplication by $\sqrt{n}$,
\[
\vert \xi_n-\zeta_n\vert\leq \frac{M_g}{g'(\mu)\sigma}\sqrt{n}\vert X_n-\mu\vert^2.
\]
Therefore, for every $n\in\mathbb{N}$,
\begin{equation}\label{eq:tailbound}
\mathbb{P}[\vert \xi_n-\zeta_n\vert> \varepsilon]\leq \mathbb{P}\left[\vert\sqrt{n} (X_n-\mu)\vert>\sqrt{\frac{\varepsilon g'(\mu)\sigma\sqrt{n}}{M_g}}\right] + \mathbb{P}[\vert\sqrt{n}(X_n-\mu)\vert>\delta\sqrt{n}].
\end{equation}
If $M_g=0$, the first summand disappears and we can set $\varepsilon=a_n$. By the assumed bound and the symmetry of a Gaussian random variable, for each $n\in\mathbb{N}$ and every $x\in\mathbb{R}$,
\begin{align*}
\pr{\abs{\sqrt{n}(X_n-\mu)}>\sigma x}&=\pr{\sqrt{n}(X_n-\mu)<-\sigma x}+\pr{\sqrt{n}(X_n-\mu)>\sigma x}\\
&\leq 2\bigl(\pr{N\geq x}+a_n\bigr).
\end{align*}
Together with the bound $\pr{N\geq x}\leq e^{-x^2}$, the second summand in \eqref{eq:tailbound} is
\[
\pr{\abs{\sqrt{n}(X_n-\mu)}>\delta\sqrt{n}}\leq 2(e^{-\delta^2/\sigma^2 n}+a_n)\leq c\max\left\{a_n,\frac{\log n}{\sqrt{n}}\right\}
\]
for some $c>0$ independent of $n\in\mathbb{N}$. If $M_g>0$, by setting $\varepsilon=\sigma M_g g'(\mu)^{-1}n^{-1/2}\log n$ for each $n\in\mathbb{N}$, the first summand is
\[
\pr{\abs{\sqrt{n}(X_n-\mu)}>\sigma\sqrt{\log n}}\leq 2(n^{-1}+a_n)\leq c'\max\left\{a_n,\frac{\log n}{\sqrt{n}}\right\}
\]
for some constant $c'>0$ independent of $n$. This choice of $\varepsilon$ yields
\[
\mathbb{P}[\vert \xi_n-\zeta_n\vert> \sigma M_g g'(\mu)^{-1}n^{-1/2}\log n]\leq (c+c')\max\left\{a_n,\frac{\log n}{\sqrt{n}}\right\}.
\]
Together with inequality \eqref{eq:gaussiantriangleapplied} we have for all $n\in\mathbb{N}$
\[
\sup_{x\in\R}\vert \mathbb{P}[\zeta_n\leq x]-\mathbb{P}[N\leq x]\vert \leq  a_n +(c+c')\max\left\{a_n,\frac{\log n}{\sqrt{n}}\right\}+\frac{\sigma M_g}{\sqrt{2\pi} g'(\mu)}\frac{\log n}{\sqrt{n}},
\]
whereupon choosing $C>0$ suitably completes the proof.
\end{proof}

\begin{rmk}
There exist results in the literature which are similar to Lemma \ref{lem:berryesseendeltamethod}. For example, Theorem 11.6 in \cite{D2008} deals with the case of $X_n$ being empirical averages and $g$ a function with H\"older-continuous derivative. In this case one has $a_n=n^{-1/2}$ and the guaranteed bound for the modified sequence is of order $\frac{\log n}{\sqrt{n}}$. We do not know if this rate in Lemma \ref{lem:berryesseendeltamethod} can be improved in general.
\end{rmk}

Recall the definition of the spacings as defined in the introduction by
\[
G_{n,i}=U_{(i)}-U_{(i-1)}, \qquad i=1,\ldots,n,
\]
where $U_{(i)}$ is the $i^{\text{th}}$ order statistic of $U_1,\ldots,U_{n-1}$, sampled independently and uniformly from the unit interval, with the convention that $U_{(0)}=0, U_{(n)}=1$. Also recall that $E$ denotes an exponential random variable with rate $1$.

We deduce the following theorem from Mirakhmedov \cite{M2005} who refined a Berry-Esseen theorem due to Does and Klaassen \cite{DK1984}.

\begin{thm}\label{thm:berryesseenspacings}
Let $G_{n,1},\ldots, G_{n,n}$ be as above. Suppose $f:\R\to\R$ is measurable with $\ee{f(E)^3}<\infty$ and $\sigma^2:=\Var f(E)-\Cov(E,f(E))^2>0$. Then there exists a constant $C>0$ such that, for all $n\in\mathbb{N}$,
\[
\sup_{x\in\R}\Bigg|\mathbb{P}\left[\sqrt{n}\left(\frac{\frac{1}{n}\sum_{i=1}^nf(nG_{n,i})-\mathbb{E}[f(E)]}{\sigma}\right)\leq x\right]-\mathbb{P}[N\leq x]\Bigg| \leq \frac{C}{\sqrt{n}},
\]
where $N$ is a standard Gaussian random variable.
\end{thm}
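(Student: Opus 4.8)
The plan is to deduce the bound from the Berry--Esseen theorem of Mirakhmedov~\cite{M2005} for linear statistics of uniform spacings, which refines the earlier estimate of Does and Klaassen~\cite{DK1984}; the bulk of the work will be translating between normalizations. Write $T_n:=\frac1n\sum_{i=1}^n f(nG_{n,i})$, $m_n:=\ee{T_n}$, and $v_n:=\Var(T_n)$. For a fixed measurable $f$ with $\ee{\abs{f(E)}^3}<\infty$ and $\liminf_{n\to\infty} nv_n>0$, the result of \cite{M2005} provides a constant $C_0>0$ such that
\[
\sup_{x\in\R}\left|\pr{\frac{T_n-m_n}{\sqrt{v_n}}\le x}-\pr{N\le x}\right|\le\frac{C_0}{\sqrt n}
\]
for all $n$. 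Two things then remain: first, to identify $\lim_n m_n$ and $\lim_n nv_n$ and to check nondegeneracy; second, to show that replacing the exact normalization $(m_n,\sqrt{v_n})$ by $(\ee{f(E)},\sigma/\sqrt n)$ perturbs the Kolmogorov distance by at most $O(1/\sqrt n)$.

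For the first point I would use the exponential representation $(nG_{n,1},\dots,nG_{n,n})\overset{d}{=}(E_1/\bar E_n,\dots,E_n/\bar E_n)$ with $\bar E_n=\frac1n\sum_{i=1}^n E_i$. From this $nG_{n,1}$ has density $\frac{n-1}{n}(1-\tfrac xn)^{n-2}\mathbf{1}_{[0,n]}(x)$, and since $(1-x/n)^{n-2}-e^{-x}=O\!\big(n^{-1}(x+x^2)e^{-x}\big)$ on $[0,n/2]$ with both terms negligible beyond, while $\ee{\abs{f(E)}^3}<\infty$ forces $\ee{(E+E^2)\abs{f(E)}}<\infty$ by Hölder's inequality, integrating against $f$ yields $m_n=\ee{f(E)}+O(1/n)$. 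For the variance, $nv_n=\Var f(nG_{n,1})+(n-1)\Cov\!\big(f(nG_{n,1}),f(nG_{n,2})\big)$; the first term converges to $\Var f(E)$, and because the constraint $\sum_i G_{n,i}=1$ makes distinct spacings weakly negatively correlated, the second converges to $-\Cov(E,f(E))^2$, so $nv_n\to\Var f(E)-\Cov(E,f(E))^2=\sigma^2$. The value of this limit is seen most transparently by the first--order expansion $f(E_i/\bar E_n)\approx f(E_i)-f'(E_i)E_i(\bar E_n-1)$ together with $\ee{f'(E)E}=\ee{Ef(E)}-\ee{f(E)}=\Cov(E,f(E))$, which reduces $\sqrt n(T_n-\ee{f(E)})$ to $\frac1{\sqrt n}\sum_i\big[f(E_i)-\ee{f(E)}-\Cov(E,f(E))(E_i-1)\big]+o(1)$, a sum of i.i.d.\ terms of variance $\sigma^2$. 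The hypothesis $\sigma^2>0$ is precisely the $\liminf nv_n>0$ required, so \cite{M2005} applies for all large $n$, the finitely many remaining $n$ being absorbed into the constant.

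For the second point I would set $\xi_n:=(T_n-m_n)/\sqrt{v_n}$ and $\zeta_n:=\sqrt n(T_n-\ee{f(E)})/\sigma$, so that $\zeta_n=\alpha_n\xi_n+\beta_n$ with $\alpha_n:=\sqrt{nv_n}/\sigma=1+O(1/n)$ and $\beta_n:=\sqrt n(m_n-\ee{f(E)})/\sigma=O(1/\sqrt n)$. Then $\pr{\zeta_n\le x}=\pr{\xi_n\le\alpha_n^{-1}(x-\beta_n)}$, and combining the Berry--Esseen bound for $\xi_n$ with the elementary estimate
\[
\sup_{x\in\R}\big|\pr{N\le\alpha_n^{-1}(x-\beta_n)}-\pr{N\le x}\big|\le C\big(\abs{\alpha_n-1}+\abs{\beta_n}\big),
\]
valid because $x\mapsto x\varphi(x)$ is bounded ($\varphi$ the standard normal density), yields the claimed $C_1/\sqrt n$. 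Alternatively one can invoke Lemma~\ref{lem:gaussiantriangle} with $X=\xi_n$, $Y=\zeta_n$, and $\varepsilon$ of order $n^{-1/2}$, bounding $\pr{\abs{\xi_n-\zeta_n}>\varepsilon}$ through the Berry--Esseen bound for $\xi_n$.

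The main obstacle I expect lies not in the architecture of the argument but in matching the precise hypotheses of \cite{M2005}: one has to confirm that whatever moment and regularity conditions are imposed there are genuinely implied by $\ee{f(E)^3}<\infty$ and $\sigma^2>0$, and that the remainder obtained there really is $O(n^{-1/2})$ with no stray logarithmic factor. A secondary technical point needing care is the quantitative control of $m_n-\ee{f(E)}$ and $nv_n-\sigma^2$ for a merely measurable $f$ with only a third moment, where one must rely solely on dominated convergence with the integrable majorants supplied by Hölder's inequality rather than on any smoothness of $f$.
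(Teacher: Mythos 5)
Your proposal follows the same route as the paper: both deduce the bound directly from Mirakhmedov's Berry--Esseen theorem for sums of functions of uniform spacings, and the paper's proof consists only of matching notation with \cite{M2005}. Since Corollary 3 there is already stated with the asymptotic centering $n\,\ee{f(E)}$ and asymptotic variance $\sigma_n^2=n\bigl(\Var f(E)-\Cov(E,f(E))^2\bigr)$, your entire second step (passing from $(m_n,\sqrt{v_n})$ to $(\ee{f(E)},\sigma/\sqrt n)$) and the heuristic identification of $nv_n\to\sigma^2$ via a first-order expansion --- which would in any case require a derivative $f'$ that a merely measurable $f$ need not possess --- turn out to be unnecessary.
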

\begin{proof}
For the convenience of the reader we mention the necessary modifications in order to derive Theorem \ref{thm:berryesseenspacings}. In \cite{M2005} set $k=1$ and $f_m=f$, for $m=1,\ldots,N$, giving $N=N'=n$ and $R_N(G)=\sum_{i=1}^nf(nG_{n,i})$. Then $Z_{m,k}=Y_m$, $m=1,\ldots,N$, is a standard exponential random variable. In the second line in \cite[Section 2]{M2005} the author appears to incorrectly redefine $Z_{N,k}=0$ if $k$ is an integer. The relevant quantities compute to $\rho=\Cov(f(E),E)(\Var f(E))^{-1/2},$ $\Var R_N(Z)=n\Var f(E)$ and $\sigma_n^2=(1-\rho^2)\Var R_N(Z)=n(\Var f(E)-\Cov(f(E),E)^2)$. The additional term $(u-1)\Cov(f(E),E)(\Var f(E))^{1/2}$ in $g_m(u)$ vanishes in the definition of $T_N(G)$ since $\sum_{i=1}^n G_{n,i}=1$. Under the assumptions stated, the application of \cite[Corollary 3]{M2005} is valid and completes the proof.
\end{proof}

The following results are preparations for the proof of Theorem \ref{thm:berryesseen}, carrying out more technical computations. Again, $E$ stands for a standard exponential random variable.

\begin{lemma}\label{lem:cov}
Let $1\leq q<\infty$. Then,
\[
\Cov(E,\abs{E-1}^q)=(q+1)\ee{\abs{E-1}^q}-1.
\]
\end{lemma}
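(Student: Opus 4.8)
The plan is to compute the covariance directly from its definition,
\[
\Cov(E,\abs{E-1}^q)=\ee{E\,\abs{E-1}^q}-\ee{E}\,\ee{\abs{E-1}^q}=\ee{E\,\abs{E-1}^q}-\mu_q,
\]
using that $\ee{E}=1$. Thus the whole task reduces to showing that $\ee{E\,\abs{E-1}^q}=(q+2)\mu_q-1$, which would immediately give $\Cov(E,\abs{E-1}^q)=(q+2)\mu_q-1-\mu_q=(q+1)\mu_q-1$ as claimed.

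First I would write $E=(E-1)+1$, so that $\ee{E\,\abs{E-1}^q}=\ee{(E-1)\abs{E-1}^q}+\ee{\abs{E-1}^q}=\ee{(E-1)\abs{E-1}^q}+\mu_q$. It then remains to evaluate $\ee{(E-1)\abs{E-1}^q}=\ee{\sgn(E-1)\,\abs{E-1}^{q+1}}$. I would split the integral against the exponential density $e^{-x}\,\dint x$ at $x=1$:
\[
\ee{\sgn(E-1)\abs{E-1}^{q+1}}=\int_1^\infty (x-1)^{q+1}e^{-x}\dint x-\int_0^1(1-x)^{q+1}e^{-x}\dint x.
\]
The key computational step is to handle each of these pieces, and the natural tool is integration by parts applied to $\int (x-1)^{q+1}e^{-x}\dint x$ (and similarly for the second integral), differentiating the polynomial factor and integrating $e^{-x}$. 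Integrating by parts once turns $(q+1)$ into the exponent $q$ and produces the factor $q+1$ out front, together with a boundary term; since $(x-1)^{q+1}$ and $(1-x)^{q+1}$ both vanish at $x=1$ (as $q+1>0$) and the term at $\infty$ vanishes, the boundary contributions collapse except for the one at $x=0$ coming from the second integral, which equals $-1$ (up to sign bookkeeping). After reassembling, the two integrals over $[1,\infty)$ and $[0,1]$ recombine into $(q+1)\ee{\abs{E-1}^q}=(q+1)\mu_q$ minus the stray boundary constant, yielding $\ee{(E-1)\abs{E-1}^q}=(q+1)\mu_q-1$, hence $\ee{E\abs{E-1}^q}=(q+2)\mu_q-1$ and the lemma follows.

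The only real obstacle here is careful sign- and boundary-term bookkeeping in the integration by parts, because of the absolute value forcing the split at $x=1$ and because the power $q$ need not be an integer (so one should phrase the integration by parts for real exponents $q\geq 1$, where $(x-1)^{q+1}$ is $C^1$ on $(1,\infty)$ with derivative $(q+1)(x-1)^q$, and likewise on $(0,1)$). No convergence issue arises since $\ee{\abs{E-1}^q}<\infty$ for all $q$. A quick sanity check against the stated special values ($\mu_1=2e^{-1}$ gives $\Cov(E,\abs{E-1})=2\cdot 2e^{-1}-1=4e^{-1}-1$, and $\mu_2=1$ gives $\Cov(E,(E-1)^2)=3-1=2$) can be used to confirm the constants.
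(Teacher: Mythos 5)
Your proposal is correct, and the route is genuinely more direct than the paper's. You reduce everything to $\ee{(E-1)\abs{E-1}^q}=\ee{\sgn(E-1)\abs{E-1}^{q+1}}$, split the integral at $x=1$, and do a single integration by parts on each piece; the only boundary contribution is the $-1$ at $x=0$ from the $[0,1]$ piece, and the two remaining integrals recombine into $(q+1)\mu_q$. I checked the bookkeeping: $\int_1^\infty(x-1)^{q+1}e^{-x}\dint x=(q+1)\int_1^\infty(x-1)^qe^{-x}\dint x$ and $\int_0^1(1-x)^{q+1}e^{-x}\dint x=1-(q+1)\int_0^1(1-x)^qe^{-x}\dint x$, so the difference is $(q+1)\mu_q-1$, as you claim, and your sanity checks at $q=1,2$ are right. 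The paper reaches the same identity by a detour: it recognizes $\ee{(E-1)\abs{E-1}^q}$ as $-(q+2)^{-1}\ee{\abs{E-x}^{q+2}}'(1)$ using the derivative formula of its Lemma 3.6 (applied with exponent $q+2$), evaluates that derivative as $1-\ee{\abs{E-1}^{q+2}}$, and then uses a two-step moment recursion $1-\ee{\abs{E-1}^{q+2}}=-(q+2)(q+1)\mu_q+(q+2)$. The underlying engine is the same (integration by parts against the exponential density, with the split at $x=1$ handling the absolute value), but the paper's version reuses machinery it needs elsewhere for the empirical-process argument, whereas yours is self-contained and shorter. If you write it up, just make the boundary-term sign explicit rather than ``up to sign bookkeeping,'' and note, as you do, that $q+1\geq 2$ guarantees the vanishing of the boundary term at $x=1$ and the validity of the integration by parts for non-integer $q$.
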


For the proof of this result we need the following lemma.

\begin{lemma}\label{lem:derivative}
Let $1\leq q<\infty$. The function $x\mapsto \ee{\abs{E-x}^q}, x\in\R$, is continuously differentiable at $x=\ee{E}=1$ with derivative
\[
\ee{\abs{E-x}^q}'(1)= 1-\ee{\abs{E-1}^q}\quad \text{and}\quad \ee{\abs{E-1}^q}=e^{-1}\left(\Gamma(q+1)+\int_{0}^1 x^q e^x\text{d}x\right)
\]
with $\Gamma(x)=\int_0^{\infty}t^{x-1}e^{-t}\text{d}t$ being the Gamma function.
\end{lemma}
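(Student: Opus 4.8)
The plan is to compute the function $h(x):=\ee{\abs{E-x}^q}$ explicitly by splitting the integral at the point $E=x$ (for $x\geq 0$, which is all that matters near $x=1$) and then differentiate. Writing out $h(x)=\int_0^\infty |t-x|^q e^{-t}\,\dif t = \int_0^x (x-t)^q e^{-t}\,\dif t + \int_x^\infty (t-x)^q e^{-t}\,\dif t$, I would first verify that both integrands are continuous in $(t,x)$ and that the integrals converge locally uniformly, so that $h$ is continuous; then I would differentiate under the integral sign. The boundary terms coming from the variable limits of integration vanish, because at $t=x$ the integrand $|t-x|^q$ is zero (using $q\geq 1$), leaving only the interior derivatives: $h'(x) = \int_0^x q(x-t)^{q-1} e^{-t}\,\dif t - \int_x^\infty q(t-x)^{q-1} e^{-t}\,\dif t$. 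One should also check continuity of $h'$ by the same local-uniform-convergence argument (for $q=1$ the integrand $q(x-t)^{q-1}$ is just $1$ and everything is elementary; for $q>1$ it is genuinely continuous).

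Next I would simplify $h'(x)$ by the substitution $t\mapsto x-t$ in the first integral and $t\mapsto t-x$ in the second, obtaining $h'(x) = e^{-x}\bigl(\int_0^x q u^{q-1} e^{u}\,\dif u - \int_0^\infty q u^{q-1} e^{-u}\,\dif u\bigr)$. The second term is $e^{-x}\cdot q\Gamma(q) = e^{-x}\Gamma(q+1)$. Evaluating at $x=1$ gives $h'(1) = e^{-1}\int_0^1 q u^{q-1} e^{u}\,\dif u - e^{-1}\Gamma(q+1)$. To identify this with $1-\ee{\abs{E-1}^q}$, I would integrate $\int_0^1 q u^{q-1} e^u\,\dif u$ by parts: $\int_0^1 q u^{q-1} e^u\,\dif u = q u^{q-1}e^u\big|$ — more cleanly, $\int_0^1 q u^{q-1} e^u\,\dif u = \bigl[u^q e^u\bigr]_0^1 \cdot$? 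Let me instead go the other way: $\dif(u^q e^u) = q u^{q-1}e^u\,\dif u + u^q e^u\,\dif u$, hence $\int_0^1 q u^{q-1} e^u\,\dif u = e - \int_0^1 u^q e^u\,\dif u$. Therefore $h'(1) = e^{-1}\bigl(e - \int_0^1 u^q e^u\,\dif u - \Gamma(q+1)\bigr) = 1 - e^{-1}\bigl(\Gamma(q+1) + \int_0^1 u^q e^u\,\dif u\bigr)$.

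Finally I would establish the stated formula for $\ee{\abs{E-1}^q}$ itself, which by the same splitting at $t=1$ reads $h(1) = \int_0^1 (1-t)^q e^{-t}\,\dif t + \int_1^\infty (t-1)^q e^{-t}\,\dif t$; the substitutions $t\mapsto 1-u$ and $t\mapsto 1+u$ turn this into $e^{-1}\int_0^1 u^q e^{u}\,\dif u + e^{-1}\int_0^\infty u^q e^{-u}\,\dif u = e^{-1}\bigl(\int_0^1 u^q e^u\,\dif u + \Gamma(q+1)\bigr)$, which is exactly the claimed expression. Combining the last two displays yields $h'(1) = 1 - h(1) = 1 - \ee{\abs{E-1}^q}$, as required.

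The only genuinely delicate point is the justification of differentiation under the integral sign together with the continuity of $h$ and $h'$ near $x=1$; one must note that the boundary contributions at $t=x$ vanish precisely because $q\geq 1$ makes $|t-x|^{q}$ and (for the derivative) the relevant integrand vanish there, and one must produce a locally integrable dominating function — here $(|t|+x_0)^q e^{-t}$ for $x$ in a compact neighbourhood of $1$ works for $h$, and $q(|t|+x_0)^{q-1} e^{-t}$ for $h'$. Everything else is routine calculus, so I would keep those computations brief and spend the written argument on the dominated-convergence / Leibniz-rule justification.
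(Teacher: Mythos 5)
Your proposal is correct and follows essentially the same route as the paper: both split $\ee{\abs{E-x}^q}$ at $t=x$, differentiate under the integral sign (the paper invokes dominated convergence where you invoke the Leibniz rule with vanishing boundary terms), and then use the integration by parts $\int_0^1 qu^{q-1}e^u\,\dif u=e-\int_0^1 u^q e^u\,\dif u$ together with $\Gamma(q+1)=q\Gamma(q)$ to identify $h'(1)$ with $1-h(1)$. Your treatment of the differentiation step is in fact slightly more explicit than the paper's one-line appeal to dominated convergence.
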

\begin{proof}
	We first compute
	\[
	\ee{\abs{E-1}^q}=\int_{0}^1 (1-x)^qe^{-x}\text{d}x+\int_1^{\infty} (x-1)^qe^{-x}\text{d}x.
	\]
	Using the substitutions $u=1-x$ and $u=x-1$ respectively, gives
	\[
	\int_{0}^1 (1-x)^qe^{-x}\text{d}x=e^{-1}\int_{0}^1 u^q e^u\text{d}u \quad \text{and} \quad \int_1^{\infty} (x-1)^qe^{-x}\text{d}x=e^{-1}\Gamma(q+1)
	\]
	and thus proves the second equality.
	
	It is a consequence of the dominated convergence theorem that
\begin{equation}\label{eq:diffmoment1}
\ee{\abs{E-x}^q}'(1)=q\Big(2\ee{(1-E)^{q-1}\textbf{1}_{(-\infty,1]}(E)} - \ee{\abs{ E-1}^{q-1}}\Big).
\end{equation}
	By substituting as before, the first term is
	\[
	2\ee{\abs{E-1}^{q-1}\textbf{1}_{(-\infty,1]}(E)}=2e^{-1}\int_0^1 x^{q-1}e^x\text{d}x.
	\]
	Therefore, by inserting for $ \ee{\abs{E-1}^{q-1}} $, we have
	\begin{equation}\label{eq:diffmoment2}
	\ee{\abs{E-x}^q}'(1)=\frac{q}{e}\left(\int_0^1 x^{q-1}e^x\text{d}x-\Gamma(q)\right).
	\end{equation}
	Integration by parts yields
	\[
	\int_0^1 x^q e^x\text{d}x=e-q\int_0^1 x^{q-1} e^x\text{d}x,
	\]
	and thus, by means of $\Gamma(q+1)=q\Gamma(q)$, we have
	\[
	\mathbb{E}[\vert E-1\vert^q]=e^{-1}\left(\Gamma(q+1)+\int_0^1 x^q e^x\text{d}x\right)=\frac{q}{e}\left(\Gamma(q)-\int_0^1 x^{q-1} e^x\text{d}x\right)+1.
	\]
	Comparing this with \eqref{eq:diffmoment2} proves the first equality.
\end{proof}

We are now ready to compute the covariance.
\begin{proof}[Proof of Lemma \ref{lem:cov}]
By straightforward calculation,
\begin{align*}
\Cov(E,\abs{E-1}^q)&
=\ee{E\abs{E-1}^q}-\ee{E}\ee{\abs{E-1}^q}
=\ee{(E-1)\abs{E-1}^q}\\
&=-\ee{(1-E)^{q+1}\textbf{1}_{(-\infty,1)}(E)}+\ee{(E-1)^{q+1}\textbf{1}_{[1,\infty)}(E)}.
\end{align*}
Taking into account Lemma \ref{lem:derivative} and \eqref{eq:diffmoment1} it follows that
\[
\Cov(E,\abs{E-1}^q)=-(q+2)^{-1}\ee{\abs{E-x}^{q+2}}'(1)=-(q+2)^{-1}\bigl(1-\ee{\abs{E-1}^{q+2}}\bigr).
\]
By repeated partial integration it holds that
\[
1-\ee{\abs{E-1}^{q+2}}=-(q+2)(q+1)\ee{\abs{E-1}^q}+(q+2).
\]
Consequently, the result follows.
\end{proof}

\begin{proof}[Proof of Theorem \ref{thm:berryesseen}]
Using the connection to the spacings $G_{n,1},\ldots,G_{n,n}$ we have
\[
\|Z_n\|_q^q=\sum_{i=1}^n \Bigl|Z_{n,i}-\frac{1}{n}\Bigr|^q\overset{d}{=}\sum_{i=1}^n \Bigl|G_{n,i}-\frac{1}{n}\Bigr|^q=n^{-q}\sum_{i=1}^n |nG_{n,i}-1|^q.
\]
That is, with
\[
n^{q-1}\|Z_n\|_q^q\overset{d}{=}\frac{1}{n}\sum_{i=1}^n f(nG_{n,i}),
\]
we are in the situation of Theorem \ref{thm:berryesseenspacings} with $f(x)=|x-1|^q$, which is not of the form $x\mapsto ax+b$ for any $a,b\in\mathbb{R}$ and all $x\in\mathbb{R}$. Because of this and the fact that the exponential distribution has finite moments of all orders, the assumptions of Theorem \ref{thm:berryesseenspacings} are satisfied and there exists a constant $c_q>0$ only depending on $q$ such that, for all $n\in\mathbb{N}$,
\[
\sup_{x\in\R}\Bigg|\mathbb{P}\left[\sqrt{n}\left(\frac{n^{q-1}\Vert Z_n\Vert_q^q -\mu_q}{\sigma_q}\right)\leq x\right]-\mathbb{P}[N\leq x]\Bigg| \leq \frac{c_q}{\sqrt{n}},
\]
with
\[
\sigma_q^2=\Var \vert E-1\vert^q -\Cov(E,|E-1|^q)^2=\mu_{2q}-\mu_q^2-((q+1)\mu_q-1)^2
\]
by Lemma \ref{lem:cov}. Applying Lemma \ref{lem:berryesseendeltamethod} with $g(x)=x^{1/q}$ and $g'(\mu_q)=q^{-1}\mu_q^{1/q-1}$ and rearranging terms concludes the proof.
\end{proof}

\begin{rmk}
Using the so-called delta method (see, e.g., Lemma \ref{lem:deltamethod}), one can alternatively derive the central limit theorem stated as Corollary \ref{cor:cltqnorm} also from a CLT for sum-functions of spacings proved by Holst \cite{H1979}.
\end{rmk}

\begin{rmk}
We briefly want to put $\mu_q$ in a more accessible form and compare it to the centering constant in the central limit theorem \cite[Theorem 1.1]{KPT2019_I} stating for $1<q\leq \infty$ and random vectors $Y_n$, which are uniformly distributed in the $\ell_1^n$-ball, that
\[
\sqrt{n}\left(n^{1-1/q}\norm{Y_n}_q M_1(q)^{-1/q}-1\right)\limd N\bigl(0,C_1(q,q)\bigr),
\]
where
\[
M_1(q):=\Gamma(q+1)\qquad  \text{and}\qquad C_1(q,q)=q^{-2}\left(\frac{\Gamma(2q+1)}{\Gamma(q+1)^2}-1\right)-1.
\]

As can be seen from Corollary \ref{cor:cltqnorm}, the same rate of $n^{1-1/q}$ appears. With repeated partial integration one can derive for integral $q$ that
\[
\mu_q=\ee{\abs{ E-1}^q}=
\begin{cases}
\ee{(E-1)^q} &: q \text{ even},\\
2e^{-1}q!-\ee{(E-1)^q} &: q \text{ odd},
\end{cases}
\]
where $\ee{(E-1)^q}$ equals the subfactorial $!q=q!\sum_{i=0}^q\frac{(-1)^i}{i!}$, which is also the nearest integer to $e^{-1}q!$. This is roughly by a factor of $e$ smaller than $M_1(q)=q!$.
\end{rmk}

\subsection{Proof of the non-central limit theorem}

	In the following, we give a proof of Theorem \ref{thm:non-central limit theorem} and analyze the limiting distribution of
	\[
	n\Vert Z_n\Vert_{\infty}=n\max_{1\leq i\leq n} |Z_{n,i}|=n\max_{1\leq i\leq n} \Big\vert \frac{E_i}{S_n}-\frac{1}{n}\Big\vert.
	\]
	Set $M_n:=\max_{1\leq i\leq n} E_i$ and let us recall (see, e.g., \cite[p. 125]{EKM1997}) that
	\[
	M_n-\log n=: G_n\limd G,
	\]
	where $G$ is standard Gumbel distributed. First, we prove that $\norm{Z_n}_{\infty}$ and
	\[
	T_n:=\max_{1\leq i\leq n} \left( \frac{E_i}{S_n}-\frac{1}{n}\right)=\frac{M_n}{S_n}-\frac{1}{n}
	\]
	are exponentially equivalent in the following sense.
	
\begin{lemma}\label{lem:equalmaxima}
	We have
	\[
	\lim_{n\to\infty}n^{-1}\log \,\mathbb{P}\bigl[\| Z_n\|_{\infty}\neq T_n\bigr]<0.
	\]
\end{lemma}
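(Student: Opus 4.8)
The plan is to reduce the event $\{\|Z_n\|_\infty\neq T_n\}$ to a statement about the maximum, minimum and mean of the $E_i$, and then bound its probability by a two-regime argument. Put $m_n:=\min_{1\le i\le n}E_i$. Since $x\mapsto x/S_n-1/n$ is increasing, $T_n=M_n/S_n-1/n$ and $\min_{1\le i\le n}Z_{n,i}=m_n/S_n-1/n$, so by the identity $\max_i|a_i|=\max(\max_i a_i,-\min_i a_i)$,
\[
\|Z_n\|_\infty=\max_{1\le i\le n}|Z_{n,i}|=\max\Bigl(\tfrac{M_n}{S_n}-\tfrac1n,\ \tfrac1n-\tfrac{m_n}{S_n}\Bigr).
\]
Comparing the two terms, $\|Z_n\|_\infty\neq T_n$ holds exactly when $\tfrac1n-\tfrac{m_n}{S_n}>\tfrac{M_n}{S_n}-\tfrac1n$, i.e.\ exactly when $M_n+m_n<2S_n/n=2\bar E_n$. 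Since $m_n\ge 0$, this gives
\[
\mathbb P\bigl[\|Z_n\|_\infty\neq T_n\bigr]=\mathbb P\bigl[M_n+m_n<2\bar E_n\bigr]\le\mathbb P\bigl[M_n<2\bar E_n\bigr].
\]

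Next I would split the right-hand side according to the size of $M_n$. Fix a threshold $t>2\,\mathbb E[E]=2$, e.g.\ $t=4$. On the event $\{M_n\ge t\}$, the inequality $M_n<2\bar E_n$ forces $\bar E_n>t/2>\mathbb E[E]$, which is an upper large deviation for the empirical mean of i.i.d.\ exponentials; by Cram\'er's theorem (Lemma \ref{lem:cramer}), or directly by a Chernoff bound using $\mathbb E[e^{uE}]=(1-u)^{-1}$ for $u<1$, we get $\mathbb P[\bar E_n>t/2]\le e^{-n\Lambda^*(t/2)}$ with $\Lambda^*(t/2)>0$. On the event $\{M_n<t\}$, all of $E_1,\dots,E_n$ lie below $t$, so $\mathbb P[M_n<t]=(1-e^{-t})^n=e^{n\log(1-e^{-t})}$ with $\log(1-e^{-t})<0$. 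Adding the two bounds,
\[
\mathbb P\bigl[\|Z_n\|_\infty\neq T_n\bigr]\le e^{-n\Lambda^*(t/2)}+e^{n\log(1-e^{-t})}\le 2e^{-cn},\qquad n\in\mathbb N,
\]
where $c:=\min\{\Lambda^*(t/2),\,-\log(1-e^{-t})\}>0$. Taking logarithms and dividing by $n$ yields $\limsup_{n\to\infty}n^{-1}\log\mathbb P[\|Z_n\|_\infty\neq T_n]\le -c<0$, which is the assertion.

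There is no genuine obstacle here; the only steps requiring a little care are the elementary reformulation of $\{\|Z_n\|_\infty\neq T_n\}$ as $\{M_n+m_n<2\bar E_n\}$ (in particular that dropping the nonnegative term $m_n$ is harmless) and choosing $t$ so that \emph{both} resulting probabilities decay exponentially — any $t\in(2,\infty)$ works. If one prefers the limit, rather than merely the limit superior, it is straightforward to add a matching exponential lower bound, e.g.\ $\mathbb P[\|Z_n\|_\infty\neq T_n]\ge(1-e^{-\varepsilon})(e^{-1}-e^{-1-\varepsilon})^{\,n-1}$ for small $\varepsilon\in(0,\tfrac12)$ and $n$ large, showing that $n^{-1}\log\mathbb P[\|Z_n\|_\infty\neq T_n]$ stays bounded away from both $0$ and $-\infty$; however, for the exponential-equivalence applications in Theorems \ref{thm:non-central limit theorem}, \ref{thm:moderate} and \ref{thm:maxldp} only the upper bound above is needed.
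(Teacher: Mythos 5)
Your proof is correct and follows essentially the same route as the paper: both reduce the event $\{\|Z_n\|_\infty\neq T_n\}$ to $\{M_n<2\bar E_n\}$ (you via the exact reformulation $M_n+m_n<2\bar E_n$ using the minimum, the paper via an index argument), and both then split into $\mathbb P[M_n<4]+\mathbb P[\bar E_n>2]$, bounding the first term explicitly and the second by Cram\'er's theorem. Your closing remark about the limit versus the limit superior is apt, since the paper's own proof likewise only establishes the upper bound, which is all that the subsequent exponential-equivalence arguments require.
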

	\begin{proof}
		We first prove that everywhere except for $\{S_n=0\}$ we have the implication
		$$\Vert Z_n\Vert_{\infty}\neq T_n\Longrightarrow \frac{M_n}{S_n}<\frac{2}{n}.$$
		Note that $\Vert Z_n\Vert_{\infty}\geq T_n$ and if $\Vert Z_n\Vert_{\infty}\neq T_n$, there must be some index $i_0\in\{1,\ldots,n\}$ such that
		$$\Big\vert\frac{E_{i_0}}{S_n}-\frac{1}{n}\Big\vert>T_n=\frac{M_n}{S_n}-\frac{1}{n}.$$
		This can only occur if $\frac{E_{i_0}}{S_n}-\frac{1}{n}$ is negative (otherwise we would have equality), i.e.,
		$$\frac{1}{n}-\frac{E_{i_0}}{S_n}=\Big\vert\frac{E_{i_0}}{S_n}-\frac{1}{n}\Big\vert> \frac{M_n}{S_n}-\frac{1}{n}.$$
		Since $\frac{E_{i_0}}{S_n}\geq 0$, this gives the desired implication. Therefore,
		\[
		\mathbb{P}\bigl[\Vert Z_n\Vert_{\infty}\neq T_n\bigr]\leq \mathbb{P}\Big[M_n<2\bar{E}_n\Big]\leq \Pro[M_n<4]+\Pro[\bar{E}_n>2].
		\]
		By means of the inequality $1+x\leq e^x$, the first summand evaluates to
		\[
		\Pro[M_n<4]=(1-e^{-4})^n\leq e^{-e^{-4}n}
		\]
		 and by Cram\'{e}r's theorem, Lemma \ref{lem:cramer},
		\[
		\lim_{n\to\infty}n^{-1}\log\, \mathbb P[\bar{E}_n>2]<0. 	
		\]
		This completes the proof of the lemma.
	\end{proof}

The previous lemma implies that for any sequence $(a_n)_{n\in\N}$ the sequences $(a_n\|Z_n\|_{\infty})_{n\in\N}$ and $(a_nT_n)_{n\in\N}$ are exponentially equivalent at any speed $s_n$ with $s_n/n\to 0$ since then, for every $\delta>0$,
\[
\limsup_{n\to\infty}s_n^{-1}\log\,\prb{\abs{a_n\|Z_n\|_{\infty}-a_nT_n}>\delta}\leq \limsup_{n\to\infty} ns_n^{-1}n^{-1}\log\, \prb{\| Z_n\|_{\infty}\neq T_n}=-\infty.
\]
In particular, $n\Vert Z_n\Vert_{\infty}- nT_n\xrightarrow{\mathbb{P}}0$, and to establish Theorem \ref{thm:non-central limit theorem} it suffices to prove $nT_n-(\log n-1)\xrightarrow{d} G$.

By means of the connection to the spacings $G_n=(G_{n,1},\ldots,G_{n,n})$ we have
	\[
	nT_n\overset{d}{=}n\max_{1\leq i\leq n} G_{n,i}-1.
	\]
	We cite from Devroye \cite[Lemma 2.4]{D1981} the classical result
	\[
	n\max_{1\leq i\leq n} G_{n,i}-\log n\limd G,
	\]
	where $G$ is standard Gumbel distributed. Rewriting this for $nT_n$ gives
	\[
	nT_n-\log n+1\limd G.
	\]
	and completes the proof.\hfill $\Box$

\begin{rmk}
	We can read off from Theorem \ref{thm:non-central limit theorem} that the maximum norm of random points in the shifted simplex converges to zero at the rate $\frac{\log n}{n}$ with fluctuations of order $(\log n)^{-1}$. Uniformly distributed random vectors $Y_n$ in the $\ell_1^n$-ball exhibit a similar behaviour as can be read off from \cite[Theorem 1.1(c)]{KPT2019_I}. Namely,
	\[
	n\|Y_n\|_{\infty}-\log n\limd G.
	\]
\end{rmk}

\subsection{Proof of the MDP}
We will derive the proof of Theorem \ref{thm:moderate} from a result due to Devroye \cite[Lemma 3.2]{D1981}, which we rephrase in our notation.

\begin{lemma}\label{lem:devroye}
Let $(a_n)_{n\in\mathbb{N}}$ be a sequence of positive numbers satisfying $a_n\to 0$ and $a_n\log n\to\infty$, as $n\to\infty$. Then
\[
\lim_{n\to\infty} n^{a_n}\pr{\frac{n}{\log n}\frac{M_n}{S_n}-1>a_n}=1 \quad\text{ and }\quad\lim_{n\to\infty} \exp(n^{a_n})\pr{\frac{n}{\log n}\frac{M_n}{S_n}-1<-a_n}=1.
\]
\end{lemma}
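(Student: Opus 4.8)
The plan is to prove both limits by replacing the random normalisation $S_n=n\bar E_n$ by its typical value $n$ and then invoking the exact distribution of $M_n$. Rewriting the events, $\{\frac{n}{\log n}\frac{M_n}{S_n}-1>a_n\}=\{M_n>(1+a_n)\bar E_n\log n\}$ and $\{\frac{n}{\log n}\frac{M_n}{S_n}-1<-a_n\}=\{M_n<(1-a_n)\bar E_n\log n\}$. From $\pr{M_n\le t}=(1-e^{-t})^n$ for $t\ge0$ I would first record that, for any $b_n\to0$ with $b_n\log n\to\infty$,
\[
\pr{M_n>(1+b_n)\log n}=1-\exp\bigl(n\log(1-n^{-1-b_n})\bigr)=n^{-b_n}(1+o(1)),
\]
\[
\pr{M_n\le(1-b_n)\log n}=\exp\bigl(n\log(1-n^{b_n-1})\bigr)=\exp(-n^{b_n})(1+o(1)).
\]
For the second line it is important that expanding the logarithm yields $n\log(1-n^{b_n-1})=-n^{b_n}+o(1)$ with an \emph{additive} $o(1)$ error — the tail $\sum_{k\ge2}k^{-1}n^{kb_n-k+1}$ vanishes because $b_n\to0$ — since the prefactor $\exp(n^{a_n})$ appearing in the statement is large.

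To handle the denominator I would fix a sequence $\eps_n\downarrow0$ with the properties (i) $\eps_n=o(a_n)$; (ii) $\eps_n\log n\to0$ and $n^{a_n}\eps_n\log n\to0$; and (iii) $\pr{|\bar E_n-1|>\eps_n}$ decays faster than both $n^{-a_n}$ and $\exp(-n^{a_n})$. Property (iii) is the Chernoff bound $\pr{|\bar E_n-1|>\eps_n}\le2\exp(-n\eps_n^2/4)$, valid for small $\eps_n$ since the Cram\'er rate function $x\mapsto x-1-\log x$ of $\bar E_n$ dominates $(x-1)^2/4$ near $x=1$. A single choice that works throughout is $\eps_n=n^{(3a_n-2)/4}$: then $n\eps_n^2=n^{3a_n/2}$ outgrows both $a_n\log n$ and $n^{a_n}$ because $a_n\to0$, one has $n^{a_n}\eps_n\log n=n^{(7a_n-2)/4}\log n\to0$, and, using $a_n\log n\to\infty$, $\eps_n=o(1/\log n)=o(a_n)$.

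Combining the pieces, on $\{|\bar E_n-1|\le\eps_n\}$ the event $\{M_n>(1+a_n)\bar E_n\log n\}$ is squeezed between $\{M_n>(1+b_n^{\pm})\log n\}$ with $b_n^{\pm}:=a_n\pm\eps_n(1+a_n)$, both of which satisfy $b_n^{\pm}\to0$ and $b_n^{\pm}\log n\to\infty$ by (i); multiplying by $n^{a_n}$ and using the first max-estimate, the probability is squeezed — up to the error $n^{a_n}\pr{|\bar E_n-1|>\eps_n}\to0$ from (iii) — between $n^{a_n-b_n^{\pm}}(1+o(1))=n^{\mp\eps_n(1+a_n)}(1+o(1))$, both of which tend to $1$ by (ii); this gives the first limit. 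Symmetrically, on $\{|\bar E_n-1|\le\eps_n\}$ the event $\{M_n<(1-a_n)\bar E_n\log n\}$ is squeezed between $\{M_n<(1-b_n^{\pm})\log n\}$ with $b_n^{\pm}:=a_n\pm\eps_n(1-a_n)$; by the second max-estimate $\exp(n^{a_n})\pr{M_n<(1-b_n^{\pm})\log n}=\exp\bigl(n^{a_n}-n^{b_n^{\pm}}\bigr)(1+o(1))$ with $n^{a_n}-n^{b_n^{\pm}}=O\bigl(n^{a_n}\eps_n\log n\bigr)\to0$ by (ii), and the contribution of $\{|\bar E_n-1|>\eps_n\}$ is $o(\exp(-n^{a_n}))$ by (iii); this gives the second limit.

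The individual estimates are routine; the genuinely delicate point is the calibration of $\eps_n$. The huge factor $\exp(n^{a_n})$ in the lower-tail statement forces the Chernoff error to be essentially double-exponentially small in $n^{a_n}$, yet the threshold perturbation must simultaneously be negligible against $n^{-a_n}$ and satisfy $n^{a_n}\eps_n\log n\to0$; these requirements are compatible exactly because $a_n\to0$ leaves a power-of-$n$ margin on both sides, and the explicit $\eps_n=n^{(3a_n-2)/4}$ makes this concrete. (This is in substance the argument of Devroye \cite[Lemma 3.2]{D1981}.)
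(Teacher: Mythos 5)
Your argument is correct, but there is nothing in the paper to compare it against line by line: the paper does not prove this lemma at all, it simply restates Devroye's Lemma~3.2 from \cite{D1981} in its own notation and uses it as a black box. What you have written is therefore a self-contained derivation of the cited result, and it holds up. The two extreme-value estimates are right: $1-(1-n^{-1-b_n})^n=n^{-b_n}(1+o(1))$ whenever $b_n\to 0$ and $b_n\log n\to\infty$, and $(1-n^{b_n-1})^n=\exp(-n^{b_n}+O(n^{-1/2}))$, where you correctly identify that the \emph{additive} control of the logarithmic expansion is the genuinely delicate point, since the error must survive multiplication by $\exp(n^{a_n})$. The calibration $\eps_n=n^{(3a_n-2)/4}$ does everything you claim: $n\eps_n^2=n^{3a_n/2}=e^{\frac{3}{2}a_n\log n}$ dominates both $a_n\log n$ and $n^{a_n}=e^{a_n\log n}$ precisely because $a_n\log n\to\infty$, so the Chernoff error is negligible against both prefactors; $n^{a_n}\eps_n\log n=n^{(7a_n-2)/4}\log n\le n^{-1/4}\log n\to 0$ once $a_n\le 1/7$, which kills the discrepancy $n^{a_n}-n^{b_n^\pm}$ in the lower tail; and $\eps_n\log n\to 0$ together with $a_n\log n\to\infty$ keeps the perturbed thresholds $b_n^\pm$ in the admissible range ($b_n^\pm\to 0$, $b_n^\pm\log n\to\infty$, $b_n^->0$ eventually). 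The sandwich step is also sound, provided one writes it as an inequality of probabilities rather than of events (the inclusions hold only on $\{|\bar E_n-1|\le\eps_n\}$, so the outer bounds must carry the additive term $\pm\mathbb{P}[|\bar E_n-1|>\eps_n]$); you indicate this, and the error terms vanish by your property (iii). In short: a correct and complete proof of a statement the paper only quotes.
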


In order to prove Theorem \ref{thm:moderate}, we restate this result in the following form.
\begin{lemma}\label{lem:mdpinterval}
Let $(s_n)_{n\in\N}$ be a positive sequence with $s_n\to \infty$ and $s_n/\log n\to 0$ as $n\to\infty$. Then, for any $x>0$,
\[
\lim_{n\to\infty}\frac{1}{s_n}\log \Pro\left[\frac{\log n}{s_n}\Bigl(\frac{n}{\log n}\|Z_n\|_{\infty}-1\Bigr)>x\right]=-x
\]
and
\[
\lim_{n\to\infty}\frac{1}{s_n}\log \Pro\left[\frac{\log n}{s_n}\Bigl(\frac{n}{\log n}\|Z_n\|_{\infty}-1\Bigr)<-x\right]=-\infty.
\]
\end{lemma}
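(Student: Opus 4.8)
The plan is to reduce Lemma~\ref{lem:mdpinterval} to Devroye's Lemma~\ref{lem:devroye} via the exponential equivalence of $\|Z_n\|_\infty$ and $T_n = \frac{M_n}{S_n} - \frac 1n$ established above, and then a change of the deviation parameter. First I would note that, because $s_n/\log n \to 0$, the sequences $(\frac{n}{\log n}\|Z_n\|_\infty)_{n\in\N}$ and $(\frac{n}{\log n}T_n)_{n\in\N} = (\frac{n}{\log n}\frac{M_n}{S_n} - \frac{1}{\log n})_{n\in\N}$ are exponentially equivalent at speed $s_n$: indeed the remark following Lemma~\ref{lem:equalmaxima} gives this for speed $s_n$ whenever $s_n/n\to 0$, which holds here. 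Hence it suffices to prove the two limits with $\|Z_n\|_\infty$ replaced by $\frac{\log n}{n} T_n$; writing $W_n := \frac{n}{\log n}\frac{M_n}{S_n}$, the event inside the first probability becomes $W_n - \frac{1}{\log n} - 1 > \frac{s_n}{\log n}x$, i.e. (absorbing the $o(1)$ shift $\frac{1}{\log n}$, which is negligible compared with $\frac{s_n x}{\log n}$ since $s_n\to\infty$) essentially $W_n - 1 > a_n$ with $a_n := \frac{s_n x}{\log n}(1+o(1))$, and similarly $W_n - 1 < -a_n$ for the second.

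Next I would check that this $a_n$ meets the hypotheses of Lemma~\ref{lem:devroye}: $a_n \to 0$ since $s_n/\log n \to 0$, and $a_n\log n = s_n x(1+o(1)) \to \infty$ since $s_n\to\infty$ and $x>0$. Lemma~\ref{lem:devroye} then yields $n^{a_n}\,\Pro[W_n - 1 > a_n] \to 1$, so $\log \Pro[W_n-1>a_n] = -a_n\log n + o(1) = -s_n x(1+o(1))$, and dividing by $s_n$ gives the first claim, $\frac{1}{s_n}\log\Pro[\cdots] \to -x$. For the lower tail, Lemma~\ref{lem:devroye} gives $\exp(n^{a_n})\,\Pro[W_n-1 < -a_n]\to 1$, so $\log\Pro[W_n - 1 < -a_n] = -n^{a_n} + o(1)$; since $n^{a_n} = e^{s_n x(1+o(1))}$ grows superlinearly in $s_n$, dividing by $s_n$ sends this to $-\infty$, which is the second claim.

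The one genuinely delicate point is handling the transition from the event $\bigl\{\frac{\log n}{s_n}(\frac{n}{\log n}T_n - 1) > x\bigr\}$ to the event $\{W_n - 1 > a_n\}$ of Devroye's lemma cleanly, because of the additive $-\frac{1}{\log n}$ term and because one must verify that the $(1\pm o(1))$ perturbation of $a_n$ does not affect the limit. I would handle this by a standard sandwiching argument: for any $\eps>0$, for $n$ large the event is contained in $\{W_n - 1 > (x-\eps)\frac{s_n}{\log n}\}$ and contains $\{W_n - 1 > (x+\eps)\frac{s_n}{\log n}\}$; applying Lemma~\ref{lem:devroye} to the parameters $a_n^{\pm} := (x\pm\eps)\frac{s_n}{\log n}$ (both admissible) gives $\limsup$ and $\liminf$ of $\frac{1}{s_n}\log\Pro[\cdots]$ pinned between $-(x+\eps)$ and $-(x-\eps)$, and letting $\eps\downarrow 0$ finishes it; the same sandwich works for the lower tail, where both bounds go to $-\infty$. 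Finally, one must also absorb, in the exponential-equivalence step, the error between $\|Z_n\|_\infty$ and $\frac{\log n}{n}T_n$ via the standard union bound $\Pro[|A_n| > \delta] \le \Pro[|A_n - B_n| > \delta/2] + \Pro[|B_n| > \delta/2]$ together with the exponential-equivalence estimate, so that the limits for $\|Z_n\|_\infty$ coincide with those for $\frac{\log n}{n}T_n$.
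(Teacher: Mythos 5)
Your proposal is correct and follows essentially the same route as the paper: reduce to $T_n=M_n/S_n-1/n$ via Lemma~\ref{lem:equalmaxima}, apply Devroye's Lemma~\ref{lem:devroye} with a deviation parameter of order $s_nx/\log n$, and take logarithms. The only cosmetic difference is that the paper absorbs the $1/\log n$ shift exactly by choosing $a_n=(s_nx\pm 1)/\log n$, whereas you handle it with an $\varepsilon$-sandwich; both work.
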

\begin{proof}
Using the continuity of the logarithm and inserting $T_n=M_n/S_n-1/n$, we can deduce from Lemma \ref{lem:devroye} that both
\begin{equation}\label{eq:mdp1}
\lim_{n\to\infty} a_n\log n+\log\,\pr{\frac{n}{\log n}T_n-1>a_n-\frac{1}{\log n}}=0
\end{equation}
and
\begin{equation}\label{eq:mdp2}
\lim_{n\to\infty} n^{a_n}+\log\,\pr{\frac{n}{\log n}T_n-1<-a_n-\frac{1}{\log n}}=0
\end{equation}
for any such sequence $(a_n)_{n\in\mathbb{N}}$.

In the following, let $x>0$ be arbitrary. First, set $a_n=(s_n/\log n)x+1/\log n=(s_nx+1)/\log n$, where $(s_n)_{n\in\mathbb{N}}$ is any sequence with $s_n\to \infty$ and $s_n/\log n\rightarrow 0$. This choice of $(a_n)_{n\in\mathbb{N}}$ meets the assumptions of Lemma \ref{lem:devroye}. Then, inserting into Equation \eqref{eq:mdp1} gives
\[
\lim_{n\to\infty} s_nx+1+\log\,\pr{\frac{n}{\log n}T_n-1>\frac{s_n}{\log n}x}=0,
\]
which implies by considering the limit of the sequence divided by $s_n$ that
\[
x+\lim_{n\to\infty} \frac{1}{s_n}\log\,\pr{\frac{n}{\log n}T_n-1>\frac{s_n}{\log n}x}=0.
\]

For Equation \eqref{eq:mdp2} we choose $a_n=(s_n/\log n)x-1/\log n=(s_nx-1)/\log n$ for all $n\in\mathbb{N}$ such that $s_nx-1>0$ and set $a_n=1$ for all other $n\in\mathbb{N}$. Since $s_n\to\infty$, we have $s_nx-1>0$ for all $n\geq n_0$, where $n_0\in\mathbb{N}$ may depend on $x$, and only need to set finitely many terms $a_n=1$. This choice of $(a_n)_{n\in\mathbb{N}}$ satisfies the assumptions of Lemma \ref{lem:devroye}. Therefore,
\[
\lim_{n\to\infty} n^{(s_nx-1)/\log n}+\log\,\pr{\frac{n}{\log n}T_n-1<-\frac{s_n}{\log n}x}=0.
\]
Proceeding as before,
\[
\lim_{n\to\infty} \frac{1}{s_n}n^{(s_nx-1)/\log n}+\frac{1}{s_n}\log\,\pr{\frac{n}{\log n}T_n-1<-\frac{s_n}{\log n}x}=0.
\]
Noting that $n^{(s_nx-1)/\log n}/s_n=\exp(s_nx-1)/s_n\to\infty$, we have
\[
\lim_{n\to\infty} \frac{1}{s_n}\log\,\pr{\frac{n}{\log n}T_n-1<-\frac{s_n}{\log n}x}=-\infty.
\]
Since $s_n/n\to 0$, as $n\to\infty$, we can apply Lemma \ref{lem:equalmaxima} and rearrange terms to complete the proof of Lemma \ref{lem:mdpinterval}.
\end{proof}

We now deduce Theorem \ref{thm:moderate} using a standard technique in large deviations theory.

\begin{proof}[Proof of Theorem \ref{thm:moderate}]
Let $(s_n)_{n\in\mathbb{N}}$ be an arbitrary positive sequence with $s_n\to\infty$ and $s_n/\log n\to 0$. Theorem \ref{thm:moderate} follows if we can show for arbitrary open $U\subset\mathbb{R}$ and closed $C\subset \mathbb{R}$ the bounds
\begin{equation}\label{eq:mdpbounds}
\liminf_{n\to\infty}s_n^{-1}\log\,\pr{X_n\in U}\geq -\inf_{z\in U} \mathbb{I}(z)\quad \text{and} \quad\limsup_{n\to\infty}s_n^{-1}\log\,\pr{X_n\in C}\leq -\inf_{z\in C} \mathbb{I}(z),
\end{equation}
where we used the notation $X_n:=\frac{\log n}{s_n}\left(\frac{n}{\log n}\|Z_n\|_{\infty}-1\right)$. Recall that
\[
\mathbb{I}(z) :=
	    \begin{cases}
	      z &: z\geq 0,\\
	      +\infty &: z<0.
	    \end{cases}
\]
If $A\subset\mathbb{R}$, we use the notation
\[
A_-:=A\cap (-\infty,0] \quad \text{and}\quad A_+:=A\cap [0,\infty)
\]
as well as
\[
a_-:=\sup A_-\quad \text{and}\quad a_+=\inf A_+,
\]
such that $\inf_{z\in A}\mathbb{I}(z)=a_+$.

We first prove the upper bound in \eqref{eq:mdpbounds} and choose a closed set $C\subset\mathbb{R}$. If $C$ is empty, the upper bound is trivial as the probability of $X_n$ being in an empty set is zero. On the other hand, if $0\in C$, the infimum is zero and the upper bound is satisfied due to $\pr{X_n\in C}\leq 1$. Therefore, assume without loss of generality that at least one of $C_-$ and $C_+$ is not empty. If both are non-empty, then $c_-<0<c_+$ and
\[
C= C_-\cup C_+ \subset (-\infty,c_-]\cup [c_+,\infty)
\]
and $\pr{X_n\in C}\leq \pr{X_n<c_-}+\pr{X_n>c_+}$ for $n\in\mathbb{N}$. This also makes sense if $C_-$ is empty, i.e., $c_-=-\infty$ or if $C_+$ is empty, i.e., $c_+=\infty$, if we interpret $\pr{X_n<-\infty}=\pr{X_n>\infty}=0$. Because of the monotonicity of the logarithm and \cite[Lemma 1.2.15]{DZ2010} it holds that
\[
\limsup_{n\to\infty}s_n^{-1}\log\, \pr{X_n\in C}\leq \max\{M_-,M_+\}
\]
with
\[
M_-:= \limsup_{n\to\infty}s_n^{-1}\log\, \pr{X_n<c_-} \quad \text{and}\quad M_+:=\limsup_{n\to\infty}s_n^{-1}\log\, \pr{X_n>c_+}.
\]
Now the upper bound follows from Lemma \ref{lem:mdpinterval} since $M_-=-\infty$ and
\[
\max\{M_-,M_+\}=M_+=-c_+=-\inf_{z\in C}\mathbb{I}(z).
\]

We now prove the lower bound and choose an open set $U\subset\mathbb{R}$. If $U_+$ is empty, the infimum is $\infty$ and the lower bound is trivially satisfied. Assume therefore without loss of generality that $U_+$ is not empty. If $0\in U$, there exists $\delta>0$ such that $(-\delta,\delta)\subset U$ since $U$ is open. Hence,
\[
\pr{X_n\in U}\geq \pr{X_n\in (-\delta,\delta)}.
\]
In this case it remains to show that $\liminf_{n\to\infty}s_n^{-1}\log\,\pr{X_n\in (-\delta,\delta)}\geq 0$. This is implied by
\[
\lim_{n\to\infty}\pr{X_n\in(-\delta,\delta)}=1.
\]
Using the definition of $X_n$, let us write
\[
\lim_{n\to\infty}\pr{X_n\in(-\delta,\delta)}=\lim_{n\to\infty}\pr{(n\|Z_n\|_{\infty}-\log n)\in (-s_n\delta,s_n\delta)}.
\]
Since $s_n\to\infty$, for arbitary $N\in\mathbb{N}$ and for all large enough $n$ it holds that $s_n>N\delta^{-1}$. By the non-central limit theorem in Theorem \ref{thm:non-central limit theorem} this yields for a standard Gumbel distributed random variable $G$ the bound
\[
\lim_{n\to\infty}\pr{X_n\in(-\delta,\delta)}\geq \lim_{n\to\infty}\pr{(n\|Z_n\|_{\infty}-\log n+1)\in (1-N,1+N)}=\pr{G\in (1-N,1+N)}.
\]
Letting $N\to\infty$ gives $\lim_{n\to\infty}\pr{X_n\in (-\delta,\delta)}=1$ and this completes the case of $0\in U$.

If on the other hand $0\not\in U$, choose some $a\in U_+$ and, by the openness of $U$, a $\delta>0$ small enough such that $(a,a+\delta)\subset U_+$. Since $(a,a+\delta)=(a,\infty)\backslash (a+\delta,\infty)$, the superadditivity of the limit inferior gives
\begin{align*}
\liminf_{n\to\infty}s_n^{-1}\log\,\pr{X_n\in U}&\geq \liminf_{n\to\infty}s_n^{-1}\log\,\pr{X_n\in (a,a+\delta)}\\
&=\liminf_{n\to\infty}s_n^{-1}\log\,\big(\pr{X_n>a}-\pr{X_n>a+\delta}\big)\\
&\geq\liminf_{n\to\infty}s_n^{-1}\log\,\pr{X_n>a}+\liminf_{n\to\infty}s_n^{-1}\log\,\left(1-\frac{\pr{X_n>a+\delta}}{\pr{X_n>a}}\right).
\end{align*}
We show that the first summand is not less than $-\mathbb{I}(a)$ and that the second summand is in fact zero. Since $a\in U_+$ was arbitrary and $\inf_{z\in U}\mathbb{I}(z)=\inf_{z\in U_+}\mathbb{I}(z)$, the lower bound in \eqref{eq:mdpbounds} then follows.

By Lemma \ref{lem:mdpinterval}
\[
\lim_{n\to\infty}s_n^{-1}\log\,\pr{X_n>a}=-\mathbb{I}(a)=-a \quad \text{and}\quad \lim_{n\to\infty}s_n^{-1}\log\,\pr{X_n>a+\delta}=-\mathbb{I}(a+\delta)=-(a+\delta).
\]
Choose $\varepsilon>0$ smaller than $\delta/2$. For all large enough $n$ we have both
\[
\pr{X_n>a}\geq e^{-s_n(\mathbb{I}(a)+\varepsilon)}\quad \text{and}\quad \pr{X_n>a+\delta}\leq e^{-s_n(\mathbb{I}(a+\delta)-\varepsilon)}.
\]
Thus,
\[
\frac{\pr{X_n>a+\delta}}{\pr{X_n>a}}\leq e^{-s_n(\mathbb{I}(a+\delta)-\mathbb{I}(a)-2\varepsilon)}=e^{-s_n(\delta-2\varepsilon)}\to 0,
\]
and consequently,
\[
\liminf_{n\to\infty}s_n^{-1}\log\left(1-\frac{\pr{X_n>a+\delta}}{\pr{X_n>a}}\right)= 0.
\]
Therefore,
\[
\liminf_{n\to\infty}s_n^{-1}\log\,\pr{X_n\in U}\geq \liminf_{n\to\infty}s_n^{-1}\log\,\pr{X_n>a}\geq -\mathbb{I}(a)-\varepsilon.
\]
Letting $\varepsilon\to 0$ and noting that $a\in U_+$ was arbitrary yields
\[
\liminf_{n\to\infty}s_n^{-1}\log\,\pr{X_n\in U}\geq -\inf_{z\in U_+} \mathbb{I}(z)= -\inf_{z\in U}\mathbb{I}(z).
\]
This completes the proof of \eqref{eq:mdpbounds} and thus the proof of Theorem \ref{thm:moderate}.
\end{proof}

\subsection{Proof of the LDP}

For the proof of Theorem \ref{thm:maxldp} we use a rather general result on large deviations for maxima and minima due to Giuliano and Macci \cite[Proposition 3.1]{GM2014}. Recall that a function $H:\mathbb{R}\to\mathbb{R}$ is said to be regularly varying at $\infty $ of index $\alpha\in\mathbb{R}$ if, for all $u>0$, $\lim_{t\to\infty}\frac{H(ut)}{H(t)}=u^{\alpha}$.

\begin{lemma}\label{lem:ldpgiulianomacci}
	Let  $X,X_1,X_2,\ldots$ be i.i.d. real-valued random variables with $\pr{X\leq x}<1$ for all $x>0$ such that $-\log\,\pr{X>x} $ is regularly varying at $\infty$ of index $\alpha>0$ as a function of $x\in\mathbb{R}$. Choose $m_n\in\mathbb{R}$ such that $\pr{X>m_n}=\frac{1}{n}$ and set $M_n=\max_{1\leq i\leq n}X_i$. Then $ \big(\frac{M_n}{m_n}\big)_{n\geq 1} $ satisfies an LDP with speed $ s_n=\log n $ and good rate function
	\[\mathbb{I}(z) :=
	    \begin{cases}
	      z^{\alpha}-1 &: z\geq 1,\\
	      +\infty &: z<1.
	    \end{cases}
	\]
\end{lemma}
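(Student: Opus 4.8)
The plan is to reduce the statement to elementary tail estimates for $M_n$ combined with the defining relation of regular variation. Write $N(x):=-\log\mathbb{P}[X>x]$, so that by hypothesis $N$ is regularly varying at infinity of index $\alpha>0$ and $N(m_n)=\log n$. First I would note that $m_n\to\infty$: since $\mathbb{P}[X\le x]<1$ for every $x>0$ while $\mathbb{P}[X\le m_n]=1-\tfrac1n\to1$, the $m_n$ cannot remain bounded. Hence, for each fixed $z>0$, regular variation gives
\[
\frac{N(zm_n)}{\log n}=\frac{N(zm_n)}{N(m_n)}\longrightarrow z^{\alpha}\qquad(n\to\infty).
\]
Combining this with the elementary two-sided bound
\[
\tfrac12\bigl(n\,\mathbb{P}[X>t]\wedge 1\bigr)\le\mathbb{P}[M_n>t]=1-\bigl(1-\mathbb{P}[X>t]\bigr)^n\le n\,\mathbb{P}[X>t]
\]
at $t=zm_n$ and writing $n\,\mathbb{P}[X>zm_n]=\exp\bigl((1-z^{\alpha}+o(1))\log n\bigr)$, I would obtain: for $z>1$, $\tfrac1{\log n}\log\mathbb{P}[M_n>zm_n]\to1-z^{\alpha}=-\mathbb{I}(z)$; for $z\le1$, $\tfrac1{\log n}\log\mathbb{P}[M_n>zm_n]\to0$ (indeed $\mathbb{P}[M_n>zm_n]\to1$ when $z<1$ and $\to1-e^{-1}$ when $z=1$). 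Likewise $\mathbb{P}[M_n\le zm_n]=\exp\bigl(n\log(1-\mathbb{P}[X>zm_n])\bigr)$, and for $0<z<1$ one has $n\,\mathbb{P}[X>zm_n]\to\infty$ while for $z\le0$ the factor $\mathbb{P}[X>zm_n]$ is eventually at least $\mathbb{P}[X>0]>0$; in both cases $\tfrac1{\log n}\log\mathbb{P}[M_n\le zm_n]\to-\infty$. In particular $M_n/m_n\to1$ in probability.

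Next I would assemble the LDP. The rate function $\mathbb{I}$ is lower semicontinuous with sub-level sets $\{z:\mathbb{I}(z)\le a\}=[1,(1+a)^{1/\alpha}]$, hence good. For the upper bound let $C\subset\mathbb{R}$ be closed and nonempty. If $C\cap[1,\infty)=\emptyset$, then closedness forces $\sup C<1$, so $\mathbb{P}[M_n/m_n\in C]\le\mathbb{P}[M_n\le(\sup C)m_n]$ and the first paragraph gives $\limsup_n\tfrac1{\log n}\log\mathbb{P}[M_n/m_n\in C]=-\infty=-\inf_C\mathbb{I}$. Otherwise set $z_0:=\min(C\cap[1,\infty))$, so $\inf_C\mathbb{I}=z_0^{\alpha}-1$. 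If $z_0=1$, the trivial bound $\mathbb{P}[M_n/m_n\in C]\le1$ already gives the claim. If $z_0>1$, then $C\cap[1,z_0)=\emptyset$, so $C\subseteq(-\infty,c]\cup[z_0,\infty)$ with $c:=\sup\bigl(C\cap(-\infty,1)\bigr)<1$; bounding $\mathbb{P}[M_n/m_n\in C]\le\mathbb{P}[M_n\le cm_n]+\mathbb{P}[M_n>z'm_n]$ for $z'\in(1,z_0)$, using the first paragraph, and then letting $z'\uparrow z_0$ yields $\limsup_n\tfrac1{\log n}\log\mathbb{P}[M_n/m_n\in C]\le1-z_0^{\alpha}=-\inf_C\mathbb{I}$.

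For the lower bound let $U\subset\mathbb{R}$ be open; I may assume $U\cap[1,\infty)\ne\emptyset$, and it suffices to show $\liminf_n\tfrac1{\log n}\log\mathbb{P}[M_n/m_n\in U]\ge-\mathbb{I}(z)$ for each $z\in U\cap[1,\infty)$. If $z>1$, pick $\varepsilon>0$ with $z-\varepsilon>1$ and $(z-\varepsilon,z+\varepsilon)\subset U$; then
\[
\mathbb{P}[M_n/m_n\in U]\ge\mathbb{P}[M_n/m_n>z-\varepsilon]-\mathbb{P}[M_n/m_n>z+\tfrac{\varepsilon}{2}],
\]
and since $\tfrac1{\log n}\log$ of these two probabilities tends to $1-(z-\varepsilon)^{\alpha}$ and $1-(z+\tfrac{\varepsilon}{2})^{\alpha}$ respectively, the second strictly smaller (here $\alpha>0$ matters), the right-hand side is $\mathbb{P}[M_n/m_n>z-\varepsilon]\,(1+o(1))$, whence $\liminf_n\tfrac1{\log n}\log\mathbb{P}[M_n/m_n\in U]\ge1-(z-\varepsilon)^{\alpha}$; letting $\varepsilon\downarrow0$ gives $\ge-\mathbb{I}(z)$. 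If $z=1\in U$, pick $\varepsilon>0$ with $(1-\varepsilon,1+\varepsilon)\subset U$; since $M_n/m_n\to1$ in probability, $\mathbb{P}[M_n/m_n\in(1-\varepsilon,1+\varepsilon)]\to1$ and hence $\liminf_n\tfrac1{\log n}\log\mathbb{P}[M_n/m_n\in U]\ge0=-\mathbb{I}(1)$. Taking the supremum over $z\in U\cap[1,\infty)$ completes the lower bound, and with it the LDP.

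The computations themselves are routine; the main obstacle is the bookkeeping in passing from the pointwise tail asymptotics to the set-wise LDP bounds. The delicate points are: (i) in the open-set lower bound, ensuring the subtracted probability is exponentially (at speed $\log n$) negligible, which is exactly where $\alpha>0$ enters, forcing $\mathbb{I}$ to be strictly increasing and continuous on $[1,\infty)$; (ii) the boundary point $z=1$, where the rate vanishes and one argues instead through the concentration $M_n/m_n\to1$ in probability; and (iii) minor measure-theoretic care (possible atoms of $X$, and the distinction between $\mathbb{P}[M_n\ge t]$ and $\mathbb{P}[M_n>t]$), which is why above only events of the form $\{M_n>z'm_n\}$ are compared in the upper bound.
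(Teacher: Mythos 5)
Your argument is correct and complete, but note that the paper itself does not prove this lemma at all: it is quoted verbatim from Giuliano and Macci \cite[Proposition 3.1]{GM2014}, so there is no in-paper proof to match against. What you have produced is a self-contained derivation along the natural lines one would expect behind that citation: from $N(m_n)=\log n$ and regular variation you get the pointwise exponential asymptotics $n\,\Pro[X>zm_n]=\exp\bigl((1-z^{\alpha}+o(1))\log n\bigr)$ for $z>0$, you convert these into tail asymptotics for $M_n$ via the elementary bounds $\tfrac12\min\{n\Pro[X>t],1\}\leq 1-(1-\Pro[X>t])^n\leq n\Pro[X>t]$, and you then do the standard bookkeeping to pass to set-wise LDP bounds (splitting a closed set at its minimum in $[1,\infty)$, the log-of-sum principle, and the subtraction trick for the open-set lower bound, where the strict monotonicity of $z\mapsto z^{\alpha}$ makes the subtracted term negligible at speed $\log n$). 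The delicate points you flag are indeed the right ones and are handled correctly: the boundary case $z=1$ via $M_n/m_n\to 1$ in probability, the case $z\leq 0$ where regular variation is unavailable and you instead use $\Pro[X>0]>0$, and the care with $>$ versus $\geq$. Two cosmetic remarks only: in the closed-set bound the case $C\cap(-\infty,1)=\emptyset$ gives $c=-\infty$ and the corresponding probability should be read as $0$, and the inequality $1-(1-p)^n\geq\tfrac12\min\{np,1\}$ deserves its one-line verification (split according to $p\gtrless 1/n$). Compared with simply invoking \cite{GM2014}, your route costs a page but makes the paper self-contained and makes transparent exactly where the hypotheses (regular variation of the log-tail, $\alpha>0$, $\Pro[X\leq x]<1$ for $x>0$) enter.
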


In order to prove Theorem \ref{thm:maxldp} we make use of this result, Lemma \ref{lem:expequivalence}, and the next lemma which states the exponential equivalence of the sequences in question.

\begin{lemma}\label{lem:expeqmax}
	Set $M_n=\max_{1\leq i\leq n} E_i$.  The sequences $\bigl(\frac{n}{\log n}\Vert Z_n\Vert_{\infty}\bigr)_{n\in\mathbb{N}} $ and $ \bigl(\frac{M_n}{\log n}\bigr)_{n\in\mathbb{N}} $ are exponentially equivalent at speed $ \log n $.
\end{lemma}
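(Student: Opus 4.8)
We must verify the defining property of exponential equivalence at speed $\log n$, namely that for every $\delta>0$,
\[
\limsup_{n\to\infty}(\log n)^{-1}\log\,\mathbb{P}\Bigl[\Bigl|\tfrac{n}{\log n}\|Z_n\|_\infty-\tfrac{M_n}{\log n}\Bigr|>\delta\Bigr]=-\infty.
\]
My plan is to first replace $\|Z_n\|_\infty$ by $T_n=M_n/S_n-1/n$ at negligible cost: on the event $\{\|Z_n\|_\infty=T_n\}$ the quantities $\tfrac{n}{\log n}\|Z_n\|_\infty$ and $\tfrac{n}{\log n}T_n$ coincide, so
\[
\mathbb{P}\Bigl[\Bigl|\tfrac{n}{\log n}\|Z_n\|_\infty-\tfrac{M_n}{\log n}\Bigr|>\delta\Bigr]\le \mathbb{P}\bigl[\|Z_n\|_\infty\neq T_n\bigr]+\mathbb{P}\Bigl[\Bigl|\tfrac{n}{\log n}T_n-\tfrac{M_n}{\log n}\Bigr|>\delta\Bigr],
\]
and by Lemma \ref{lem:equalmaxima} the first term is bounded by $e^{-cn}$ for some $c>0$ and all large $n$, which contributes $-\infty$ at speed $\log n$ since $n/\log n\to\infty$. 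It then remains to handle the second term.

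For that, I would rewrite the difference. Since $S_n=n\bar E_n$ with $\bar E_n>0$ almost surely, $nT_n=M_n/\bar E_n-1$, whence
\[
\frac{n}{\log n}T_n-\frac{M_n}{\log n}=\frac{1}{\log n}\Bigl(M_n\Bigl(\tfrac{1}{\bar E_n}-1\Bigr)-1\Bigr).
\]
Now fix an arbitrary $K>1$ and pick $\eta=\eta(\delta,K)\in(0,\tfrac12)$ so small that $\tfrac{\delta}{4\eta}>K$. On $\{|\bar E_n-1|\le\eta\}$ one has $|\tfrac{1}{\bar E_n}-1|\le\tfrac{\eta}{1-\eta}\le 2\eta$, so if moreover $M_n\le K\log n$ then $|M_n(\tfrac{1}{\bar E_n}-1)-1|\le 2\eta K\log n+1\le\delta\log n$ for all large $n$ (using $\delta-2\eta K>0$). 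Hence, for all large $n$,
\[
\mathbb{P}\Bigl[\Bigl|\tfrac{n}{\log n}T_n-\tfrac{M_n}{\log n}\Bigr|>\delta\Bigr]\le \mathbb{P}\bigl[|\bar E_n-1|>\eta\bigr]+\mathbb{P}\bigl[M_n>K\log n\bigr].
\]

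Finally I would bound the two remaining terms. By Cram\'er's theorem (Lemma \ref{lem:cramer}) applied to $E_1,E_2,\dots$, $\limsup_{n\to\infty}n^{-1}\log\,\mathbb{P}[|\bar E_n-1|>\eta]<0$, so this term again contributes $-\infty$ at speed $\log n$. For the maximum, a union bound gives $\mathbb{P}[M_n>K\log n]=1-(1-n^{-K})^n\le n\cdot n^{-K}=n^{1-K}$, so $(\log n)^{-1}\log\,\mathbb{P}[M_n>K\log n]\le 1-K$. Combining these estimates via \cite[Lemma 1.2.15]{DZ2010} yields $\limsup_{n\to\infty}(\log n)^{-1}\log\,\mathbb{P}[|\tfrac{n}{\log n}\|Z_n\|_\infty-\tfrac{M_n}{\log n}|>\delta]\le 1-K$, and letting $K\to\infty$ gives the assertion. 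There is no serious obstacle here: the only point requiring care is the order of quantifiers — $\delta$ is fixed first and the event whose probability is bounded never changes, whereas $K$ (hence $\eta$) is a free parameter introduced purely to drive the $\limsup$ to $-\infty$. In particular no lower-tail estimate for $M_n$ or $\bar E_n$ is needed, so the genuinely probabilistic inputs are just Lemma \ref{lem:equalmaxima}, Cram\'er's theorem, and the elementary upper tail of the maximum of i.i.d.\ exponentials.
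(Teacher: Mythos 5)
Your proof is correct. It follows the paper's argument through its first two steps --- the reduction to $T_n$ via Lemma \ref{lem:equalmaxima} and the identity $nT_n-M_n=M_n(\bar{E}_n^{-1}-1)-1$ --- but then takes a genuinely different route to control the product $M_n(\bar{E}_n^{-1}-1)$. The paper factors it as $B_nC_n$ with $B_n=n^{-1/4}M_n$ and $C_n=n^{1/4}(\bar{E}_n-1)\bar{E}_n^{-1}$, bounding $\mathbb{P}[|B_n|>1/2]$ by the stretched-exponential tail $n e^{-n^{1/4}/2}$ and $\mathbb{P}[|C_n|>2]$ via Cram\'er's theorem together with the moderate deviations principle (Lemma \ref{lem:mdpforsums}) for $\bar{E}_n$ at scale $n^{1/4}$, so that each piece is $-\infty$ at speed $\log n$ outright. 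You instead truncate $M_n$ at its natural scale $K\log n$, accept the merely polynomial price $\mathbb{P}[M_n>K\log n]\le n^{1-K}$ (rate $1-K$), and recover $-\infty$ by sending the free parameter $K\to\infty$ at the end; the complementary event requires only Cram\'er for $\bar{E}_n$, so the MDP for sums is not needed at all. Your quantifier bookkeeping is sound --- the event depends only on the fixed $\delta$, while $K$ and $\eta$ are auxiliary --- and your version is, if anything, slightly more elementary; the paper's choice of the intermediate scale $n^{1/4}$ buys only the cosmetic advantage that every individual term is superlogarithmically small without a limiting parameter.
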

\begin{proof}
	This amounts to showing that, for every $ \delta>0 $,
	\[
	\limsup_{n\to\infty} \frac{1}{\log n}\log\, \prb{|n\|Z_n\|_{\infty}-M_n\bigr|>\delta\log n}=-\infty.
	\]
		Fix $ \delta>0 $. Then, for every $n\in\mathbb{N}$,
	\[
	\pr{\bigl|n\|Z_n\|_{\infty}-M_n\bigr|>\delta\log n}\leq \pr{\bigl|nT_n-M_n\bigr|>\delta\log n}+\prb{\|Z_{n}\|_{\infty}\neq T_n}.
	\]
	By Lemma \ref{lem:equalmaxima} the second summand satisfies
	\[
	\frac{1}{\log n}\log \,\prb{\Vert Z_{n}\Vert_{\infty}\neq T_n}\to -\infty \quad \text{as $n\to\infty$}.
	\]
	For the first summand we compute
	\[
	nT_n-M_n=M_n\bar{E}_n^{-1}-1-M_n=-(M_n(\bar{E}_n-1)\bar{E}_n^{-1}+1).
	\]
	Setting $A_n:=M_n(\bar{E}_n-1)\bar{E}_n^{-1}$, it follows that
	\[
	\pr{\bigl|nT_n-M_n\bigr|>\delta\log n}=\pr{\abs{A_n+1}>\delta\log n}.
	\]
	Whenever $n$ is large enough, $ \log n>2\delta^{-1} $ holds, and thus
	\[
	\prb{\abs{A_n+1}>\delta\log n}\leq \prb{\abs{A_n+1}>2}\leq \prb{\abs{A_n}>1}.
	\]
	Introducing $B_n:= n^{-1/4}M_n $ and $ C_n:=n^{1/4}(\bar{E}_n-1)\bar{E}_n^{-1} $, such that $A_n=B_nC_n$, gives
	\[
	\prb{\bigl|nT_n-M_n\bigr|>\delta\log n}\leq \prb{\abs{B_n}>1/2}+\prb{\abs{C_n}>2}.
	\]
	By the union bound the first summand satisfies
	\[
	\prb{\abs{B_n}>1/2}=\pr{M_n>n^{1/4}/2}\leq n\,\pr{E_1>n^{1/4}/2}=ne^{-n^{1/4}/2},
	\]	
	and thus
	\[
	\frac{1}{\log n}\log \,\prb{\abs{B_n}>1/2}\leq 1-\frac{n^{1/4}}{2\log n}\xrightarrow{n\to\infty} -\infty.
	\]
	The other summand can be estimated by means of
	\[
	\prb{\abs{C_n}>2}\leq \pr{\bar{E}_n<1/2}+\pr{n^{1/4}|\bar{E}_n-1|>1}.
	\]
	Cram\'{e}r's theorem (Lemma \ref{lem:cramer}) implies
	\[
	\frac{1}{\log n}\log \,\pr{\bar{E}_n<1/2}\xrightarrow{n\to\infty} -\infty.
	\]
	After splitting the second summand into
	\[
	\pr{n^{1/4}|\bar{E}_n-1|>1}=\pr{n^{1/4}(\bar{E}_n-1)>1}+\pr{n^{1/4}(\bar{E}_n-1)<-1},
	\]
	we apply the moderate deviations principle (Lemma \ref{lem:mdpforsums}) with $a_n=n^{-1/2}, n\in\mathbb{N},$ giving
	\[
	\lim_{n\to\infty}n^{-1/2}\log\,\pr{n^{1/4}(\bar{E}_n-1)>1}=-\frac{1}{2} \quad \text{ and } \quad \lim_{n\to\infty}n^{-1/2}\log\,\pr{n^{1/4}(\bar{E}_n-1)<-1}=-\frac{1}{2}.
	\]
	Consequently,
	\[
	\frac{1}{\log n}\log \,\pr{n^{1/4}|\bar{E}_n-1|>1}\xrightarrow{n\to\infty} -\infty,
	\]
	and thus
	\[
	\frac{1}{\log n}\log\,\prb{\abs{C_n}>2}\xrightarrow{n\to\infty} -\infty,
	\]
	completing the proof.
\end{proof}

\begin{rmk}
The choice of the sequence $n^{1/4}$ was arbitrary, any sequence growing faster than $\log n$ and slower than $\bigl(\frac{n}{\log n}\bigr)^{1/2}$ would have done the job.
\end{rmk}

\begin{proof}[Proof of Theorem \ref{thm:maxldp}]
We apply Lemma \ref{lem:ldpgiulianomacci} to the case of standard exponential random variables and verify the assumptions. We have $\pr{X\leq x}=1-e^{-x}<1$ for all $x\in\mathbb{R}$ and $-\log\pr{X>x}=x$, which is regularly varying at $\infty$ of index $\alpha=1$. We choose $m_n=\log n$ and thus obtain an LDP of speed $\log n$ and rate function as in Lemma \ref{lem:ldpgiulianomacci} with $\alpha=1$. By Lemma \ref{lem:expequivalence} the just proven Lemma \ref{lem:expeqmax} implies Theorem \ref{thm:maxldp}.
\end{proof}

\section{Comparison with the LDP for $\ell_p^n$-balls}\label{sec:comparison}

In \cite{KPT2019_I} LDPs for the $\ell_q$-norm of uniformly distributed points in $\ell_p^n$-balls were proven for the cases $1\leq p<\infty, 1\leq q<\infty$ and $p=\infty, 1\leq q\leq\infty$. In order to compare the LDP for the simplex with the one for the crosspolytope (i.e., the unit ball in $\ell_1^n$), we need to complete the picture presented in \cite{KPT2019_I} by deriving an LDP for the case $1\leq p<\infty$ and $q=\infty$. We first introduce the relevant concepts.

Let $1\leq p<\infty$ and $\mathbb{B}_p^n:=\{x\in \mathbb{R}^n: \|x\|_p\leq 1\}$ be the $\ell_p^n$-unit ball. Let $Z_n$ be uniformly distributed in $\mathbb{B}_p^n$ for all $n\in\N$. It is known through the work of Schechtman and Zinn \cite{SchechtmanZinn} that
\[
Z_n\overset{d}{=} U^{1/n}\frac{Y_n'}{\| Y_n'\|_p},
\]
where $U$ is uniformly distributed on $[0,1]$ and independent of $Y_n':=(Y_1,\ldots,Y_n)$ which has i.i.d. $p$-generalized Gaussian distributed entries. Here, we say that $Y_1$ is distributed according to the $p$-generalized Gaussian distribution if, for all $x\in\mathbb{R}$,
\[
\mathbb{P}[Y_1\leq x]=\int_{-\infty}^x f_{p}(y)\text{d}y,\quad \text{where}\quad f_p(y):=c_p e^{-|y|^p/p} \text{ for } y\in\mathbb{R},
\]
with
\[
c_p:=\frac{1}{2p^{1/p}\Gamma(1+1/p)}.
\]

We will show the following large deviations principle for $\|Z_n\|_{\infty}$.
\begin{thm}\label{thm:ldpmaxlp}
Let $1\leq p<\infty$. The sequence $\Bigl(\bigl(\frac{n}{p\log n}\bigr)^{1/p}\|Z_n\|_{\infty}\Bigr)_{n\in\mathbb{N}}$ satisfies an LDP with speed $\log n$ and rate function
\[
\mathbb{I}(z):=
\begin{cases}
z^p-1 &:z\geq 1,\\
\infty &:z<1.
\end{cases}
\]
\end{thm}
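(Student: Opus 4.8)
The plan is to mimic the strategy used for Theorem \ref{thm:maxldp}: combine a general large deviations result for maxima of i.i.d. random variables (Lemma \ref{lem:ldpgiulianomacci}) with an exponential equivalence argument (Lemma \ref{lem:expequivalence}) that removes the auxiliary factors coming from the Schechtman--Zinn representation. Concretely, write $Z_n \overset{d}{=} U^{1/n} Y_n'/\|Y_n'\|_p$ with $U\sim\Uni[0,1]$ and $Y_n'=(Y_1,\dots,Y_n)$ having i.i.d.\ $p$-generalized Gaussian coordinates. Then
\[
\|Z_n\|_\infty \overset{d}{=} U^{1/n}\,\frac{\max_{1\le i\le n}|Y_i|}{\|Y_n'\|_p}.
\]
I would first establish an LDP at speed $\log n$ for the pure maximum sequence $\bigl((\tfrac{n}{p\log n})^{1/p}\max_{1\le i\le n}|Y_i|\bigr)_{n}$, then show it is exponentially equivalent at speed $\log n$ to the target sequence.

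For the first step, apply Lemma \ref{lem:ldpgiulianomacci} with $X=|Y_1|$. One has $\pr{|Y_1|\le x}<1$ for all $x$, and the tail satisfies $-\log\pr{|Y_1|>x}\sim x^p/p$ as $x\to\infty$ (from $f_p(y)=c_pe^{-|y|^p/p}$ and a routine Laplace/integration-by-parts estimate), which is regularly varying at $\infty$ of index $\alpha=p>0$. Choosing $m_n$ with $\pr{|Y_1|>m_n}=1/n$ gives $m_n\sim (p\log n)^{1/p}$, so after dividing by $(p\log n/n)^{-1/p}\cdot$ (i.e.\ using the normalization in the statement) one gets, via $M_n/m_n$ and the asymptotic equivalence $m_n\asymptequiv (p\log n)^{1/p}$ together with another exponential-equivalence step to replace $m_n$ by $(p\log n)^{1/p}$, an LDP with speed $\log n$ and rate function $z^p-1$ on $[1,\infty)$, $+\infty$ otherwise. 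The replacement of $m_n$ by its asymptotic value is harmless because $m_n/(p\log n)^{1/p}\to 1$ and the rate function is continuous on its domain; this can be phrased as an exponential equivalence or handled by a direct contraction/scaling argument.

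The second, and main, step is the exponential equivalence at speed $\log n$ between
\[
R_n:=\Bigl(\tfrac{n}{p\log n}\Bigr)^{1/p} U^{1/n}\,\frac{\max_{i}|Y_i|}{\|Y_n'\|_p}
\quad\text{and}\quad
S_n:=\Bigl(\tfrac{n}{p\log n}\Bigr)^{1/p}\max_{i}|Y_i|.
\]
For this I would control the two stray factors separately. First, $U^{1/n}\to 1$ a.s., and more precisely $\pr{U^{1/n}<1-\eps}=(1-\eps)^n$, so $\tfrac{1}{\log n}\log\pr{|U^{1/n}-1|>\eps}\to-\infty$; since $U^{1/n}\le 1$, the factor $U^{1/n}$ can only decrease $S_n$, and combined with the fact that $S_n$ itself is at most polynomially large with overwhelming probability (its LDP upper bound confines it to a compact set at this speed), multiplying by $U^{1/n}$ is an exponentially negligible perturbation. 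Second, $\|Y_n'\|_p^p = \sum_{i=1}^n |Y_i|^p$ has mean $n\E|Y_1|^p = n$ (a direct computation with $f_p$), and by Cramér's theorem (Lemma \ref{lem:cramer}) applied to $|Y_1|^p$ — whose moment generating function is finite near the origin since $|Y_1|^p$ has exponential tails — one gets $\tfrac1n\log\pr{|\,\|Y_n'\|_p^p/n - 1| > \eps}$ bounded away from $0$, hence $\tfrac{1}{\log n}\log(\cdot)\to-\infty$; so $n^{-1/p}\|Y_n'\|_p\to 1$ exponentially fast at speed $\log n$. Rewriting $R_n = S_n \cdot U^{1/n}\cdot (n^{-1/p}\|Y_n'\|_p)^{-1}$ and using that $|R_n-S_n|\le S_n\,|U^{1/n}(n^{-1/p}\|Y_n'\|_p)^{-1}-1|$, on the event where $S_n$ is bounded (say $S_n\le K$) and the two factors are within $\eps$ of $1$ the difference is $O(K\eps)$, while the complement has probability decaying superpolynomially in $n$; this yields $\limsup_{n}\tfrac{1}{\log n}\log\pr{|R_n-S_n|>\delta}=-\infty$ for every $\delta>0$.

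The main obstacle is handling the product structure cleanly: $S_n$ is not a.s.\ bounded, so one cannot simply say "a product of a convergent sequence and one satisfying an LDP satisfies the same LDP." The fix is to intersect with the event $\{S_n\le K\}$ for large $K$, use the LDP upper bound for $S_n$ to make the complement cost arbitrarily much at speed $\log n$, and let $K\to\infty$ — a standard but slightly delicate truncation argument. Once the exponential equivalence is in hand, Lemma \ref{lem:expequivalence} transfers the LDP from $S_n$ to $R_n\overset{d}{=}(\tfrac{n}{p\log n})^{1/p}\|Z_n\|_\infty$, with the stated rate function $\mathbb{I}(z)=z^p-1$ for $z\ge 1$ and $+\infty$ for $z<1$, completing the proof.
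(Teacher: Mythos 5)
Your proposal is correct and follows essentially the same route as the paper: the Schechtman--Zinn representation, the Giuliano--Macci LDP for $M_n(p)m_n(p)^{-1}$ (Lemma \ref{lem:giulianomaccicor}), the asymptotics $m_n(p)\sim(p\log n)^{1/p}$ (Lemma \ref{lem:mnasymptotics}), and an exponential-equivalence step that disposes of $U^{1/n}$ and the $\ell_p$-norm denominator via Cram\'er's theorem, with the truncation on $\{S_n\le K\}$ handled exactly as in Lemma \ref{lem:expequivalenceldplp} by letting the LDP upper bound for the maximum send the cost of $\{S_n>\delta/\eps\}$ to $-\infty$. The final replacement of $m_n(p)$ by $(p\log n)^{1/p}$ is the content of Lemma \ref{lem:expequivalenceseq}, so no gaps remain.
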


\begin{rmk}
Let us remark that the scaling $\bigl(\frac{n}{p\log n}\bigr)^{1/p}$ is identical to the non-central limit theorem \cite[Theorem 1.1.(c)]{KPT2019_I}. In view of the scaling for other $\ell_q$-norms, this additional logarithmic part is somehow natural when $q=\infty$. The rate function $\mathbb{I}$ which we obtain, is structurally similar to the one for the case $1\leq p<\infty, p<q<\infty$ as in \cite[Theorem 1.3]{KPT2019_I}.
\end{rmk}

\begin{rmk}
We also want to compare Theorem \ref{thm:maxldp} to the result obtained by Schechtman and Zinn \cite[p. 223] {SchechtmanZinn}, who proved in our notation that
\begin{equation}\label{eq:schechtmanzinn}
\pr{n^{1/p}\|Z_n\|_{\infty}>x}\leq e^{-\gamma x^p/p}\quad \text{ for all }x>\tau(\log n)^{1/p},
\end{equation}
where $\gamma,\tau>0$ are constant that do not depend on $n$. Theorem \ref{thm:ldpmaxlp} implies for $z\geq 1$, 
\[
\lim_{n\to\infty}\frac{1}{\log n}\log \,\pr{n^{1/p}\|Z_n\|_{\infty}>z(p\log n)^{1/p}} =-(z^p-1),
\]
which, in contrast to \eqref{eq:schechtmanzinn}, gives precise asymptotics for $\log \,\pr{n^{1/p}\|Z_n\|_{\infty}>x}$ and $x\geq p^{1/p}(\log n)^{1/p}$.
\end{rmk}

The proof of Theorem \ref{thm:ldpmaxlp} will follow from the following lemmata. In the same way as in \cite[Lemma 4.2]{GKR2017} one can prove tail asymptotics for the $p$-generalized Gaussian distribution.
\begin{lemma}\label{lem:pgaussiantail}
For all $1\leq p<\infty$ and $x>0$, we have
\[
\frac{x}{x^p+p}e^{-x^p/p}\leq \int_x^{\infty} e^{-y^p/p}\text{d}y\leq \frac{1}{x^{p-1}}e^{-x^p/p}.
\]
\end{lemma}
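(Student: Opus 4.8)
The plan is to prove the two estimates separately by elementary means: the right-hand inequality follows from a one-line monotonicity trick, and the left-hand inequality from a single integration by parts.

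For the upper bound, I would use that $y\mapsto y^{p-1}$ is non-decreasing on $(0,\infty)$ --- this is where $p\geq 1$ enters --- so that $y^{p-1}\geq x^{p-1}$ whenever $y\geq x>0$. Inserting this factor turns the integrand into something with an explicit antiderivative:
\[
\int_x^{\infty} e^{-y^p/p}\dif y \leq \frac{1}{x^{p-1}}\int_x^{\infty} y^{p-1} e^{-y^p/p}\dif y = \frac{1}{x^{p-1}}\Bigl[-e^{-y^p/p}\Bigr]_{y=x}^{\infty} = \frac{1}{x^{p-1}}\,e^{-x^p/p}.
\]

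For the lower bound, the plan is to integrate by parts, starting from the identity
\[
\frac{\mathrm{d}}{\mathrm{d}y}\Bigl(-y^{1-p}e^{-y^p/p}\Bigr) = e^{-y^p/p} + (p-1)\,y^{-p}\,e^{-y^p/p},\qquad y>0.
\]
Integrating over $[x,\infty)$ and noting that the boundary term at $+\infty$ vanishes --- since $y^{1-p}e^{-y^p/p}\to 0$ as $y\to\infty$, which for $p=1$ is just $e^{-y}\to 0$ --- gives, with $I:=\int_x^{\infty}e^{-y^p/p}\dif y$,
\[
I = x^{1-p}e^{-x^p/p} - (p-1)\int_x^{\infty} y^{-p}e^{-y^p/p}\dif y \geq x^{1-p}e^{-x^p/p} - (p-1)\,x^{-p}\,I,
\]
where in the last step I used $y^{-p}\leq x^{-p}$ for $y\geq x$ (the remaining integral being finite, as it is dominated by $x^{-p}I$). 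Since $1+(p-1)x^{-p}>0$, solving for $I$ yields
\[
I \geq \frac{x^{1-p}e^{-x^p/p}}{1+(p-1)x^{-p}} = \frac{x}{x^p+p-1}\,e^{-x^p/p} \geq \frac{x}{x^p+p}\,e^{-x^p/p},
\]
which is in fact slightly stronger than the asserted bound.

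I do not anticipate a genuine obstacle here: both arguments are short, and the only points requiring a moment's care are the vanishing of the boundary term in the borderline case $p=1$ and the finiteness of the auxiliary integral $\int_x^{\infty} y^{-p}e^{-y^p/p}\dif y$. An alternative proof of the lower bound, closer to the computation in \cite{GKR2017}, is to set $g(x):=\int_x^{\infty}e^{-y^p/p}\dif y - \frac{x}{x^p+p}\,e^{-x^p/p}$, to observe that $g(x)\to 0$ as $x\to\infty$, and to compute
\[
g'(x) = -e^{-x^p/p}\,\frac{x^p+p(p+1)}{(x^p+p)^2} < 0,
\]
so that $g$ is strictly decreasing to $0$ and hence strictly positive on $(0,\infty)$.
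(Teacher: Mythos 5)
Your proof is correct. Note that the paper does not actually write out a proof of this lemma: it only remarks that the bounds can be obtained ``in the same way as in \cite[Lemma 4.2]{GKR2017}'', and the argument there is essentially your \emph{alternative} proof of the lower bound, namely showing that $g(x)=\int_x^{\infty}e^{-y^p/p}\dif y-\frac{x}{x^p+p}e^{-x^p/p}$ decreases to $0$ (your computation $g'(x)=-e^{-x^p/p}\,(x^p+p(p+1))/(x^p+p)^2<0$ checks out, as does the analogous monotonicity argument for the upper bound). Your primary route is a genuinely different and equally valid derivation: the upper bound by inserting the factor $y^{p-1}/x^{p-1}\geq 1$ to create the exact antiderivative $-e^{-y^p/p}$, and the lower bound by one integration by parts followed by the self-bounding step $(p-1)\int_x^\infty y^{-p}e^{-y^p/p}\dif y\leq (p-1)x^{-p}I$, which even yields the slightly sharper constant $x^p+p-1$ in the denominator. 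Both of your delicate points (the vanishing boundary term at $p=1$ and the finiteness of the auxiliary integral) are handled correctly, so the proposal fully fills the gap the paper leaves to the reference.
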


 Define, for each $n\in\mathbb{N}$, the number $m_n(p)>0$ by
\[
\mathbb{P}[|Y|> m_n(p)]=\frac{1}{n},
\]
where $Y$ is a $p$-generalized Gaussian random variable. From Lemma \ref{lem:ldpgiulianomacci} we can deduce the following result for $M_n(p):=\max_{1\leq i\leq n} |Y_i|.$

\begin{lemma}\label{lem:giulianomaccicor}
Let $1\leq p<\infty$. The sequence  $(M_n(p) m_n(p)^{-1})_{n\in\mathbb{N}}$ satisfies an LDP with speed $\log n$ and rate function
\[
\mathbb{I}(z):=
\begin{cases}
z^p-1 &:z\geq 1,\\
\infty &:z<1.
\end{cases}
\]
\end{lemma}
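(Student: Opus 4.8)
\textbf{Proof proposal for Lemma \ref{lem:giulianomaccicor}.}

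The plan is to apply Lemma \ref{lem:ldpgiulianomacci} with $X=|Y|$, where $Y$ is a $p$-generalized Gaussian random variable, and then transfer the conclusion by identifying the scaling sequence. First I would check the hypotheses of Lemma \ref{lem:ldpgiulianomacci}: since $|Y|$ has a density that is strictly positive on all of $[0,\infty)$, we have $\mathbb{P}[|Y|\leq x]<1$ for every $x>0$. Next I would verify that $x\mapsto -\log\mathbb{P}[|Y|>x]$ is regularly varying at $\infty$ of index $p>0$. Writing $\mathbb{P}[|Y|>x]=2c_p\int_x^\infty e^{-y^p/p}\,\text{d}y$, the two-sided bound from Lemma \ref{lem:pgaussiantail} gives
\[
2c_p\,\frac{x}{x^p+p}\,e^{-x^p/p}\leq \mathbb{P}[|Y|>x]\leq 2c_p\,\frac{1}{x^{p-1}}\,e^{-x^p/p},
\]
so that $-\log\mathbb{P}[|Y|>x]=\frac{x^p}{p}+O(\log x)$ as $x\to\infty$. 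Dividing $-\log\mathbb{P}[|Y|>ux]$ by $-\log\mathbb{P}[|Y|>x]$ and letting $x\to\infty$ yields the limit $u^p$, which is exactly the regular variation condition with index $\alpha=p$.

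With the hypotheses in place, Lemma \ref{lem:ldpgiulianomacci} applied to $X=|Y|$ gives that $(M_n(p)m_n(p)^{-1})_{n\in\mathbb{N}}$ satisfies an LDP with speed $\log n$ and good rate function $z\mapsto z^p-1$ for $z\geq 1$ and $+\infty$ otherwise, where $m_n(p)$ is precisely the quantile defined by $\mathbb{P}[|Y|>m_n(p)]=1/n$. Since $M_n(p)=\max_{1\leq i\leq n}|Y_i|$ and the defining relation for $m_n(p)$ in the statement of the lemma coincides with the one in Lemma \ref{lem:ldpgiulianomacci}, the conclusion is immediate — there is essentially nothing left to do once the regular variation is checked.

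The only genuinely substantive point is the verification of the regular variation of $-\log\mathbb{P}[|Y|>x]$, and this is where the bounds of Lemma \ref{lem:pgaussiantail} do all the work: they pin down $-\log\mathbb{P}[|Y|>x]$ up to a lower-order $O(\log x)$ term, which is negligible against the leading $x^p/p$ and therefore does not affect the limit of the ratio. I do not anticipate any real obstacle; the proof is a direct specialization of Lemma \ref{lem:ldpgiulianomacci}, with the tail asymptotics of Lemma \ref{lem:pgaussiantail} supplying the one nontrivial hypothesis. (One could alternatively invoke the remark after \cite[Lemma 4.2]{GKR2017} that the same argument as there applies, but spelling out the regular variation as above keeps the exposition self-contained.)
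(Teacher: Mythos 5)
Your proposal is correct and follows essentially the same route as the paper: both verify the hypotheses of Lemma \ref{lem:ldpgiulianomacci} for $X=|Y|$ and use the two-sided tail bounds of Lemma \ref{lem:pgaussiantail} to establish that $-\log\mathbb{P}[|Y|>x]$ is regularly varying of index $p$. Your explicit tracking of the $O(\log x)$ error term is a slightly more careful write-up of the same asymptotic equivalence the paper invokes.
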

\begin{proof}
We check the assumptions of Lemma \ref{lem:ldpgiulianomacci}, and note that by the symmetry of the generalized Gaussian distribution, $\prb{|Y|>x}=2\pr{Y>x}>0$ for every $x>0$. In order to check if $-\log \pr{|Y|>x}$ is regularly varying at $\infty$, we compute for every $u>0$,
\[
\lim_{t\to\infty}\frac{\log \,\prb{|Y|>ut}}{\log\, \prb{|Y|>t}}=\lim_{t\to\infty}\frac{\log 2+\log\, \pr{Y>ut}}{\log 2+\log\, \pr{Y>t}}=\lim_{t\to\infty} \frac{(ut)^p}{t^p}=u^p,
\]
since, by Lemma \ref{lem:pgaussiantail}, $\log \,\prb{|Y|>x}$ is asymptotically equivalent to $-x^p/p$, as $x\to\infty$. Thus, the assumptions of Lemma \ref{lem:ldpgiulianomacci} are satisfied with $\alpha=\alpha_p:=p>0$ and we can complete the proof.
\end{proof}

In the following, we use for any two positive sequences $(a_n)_{n\in\mathbb{N}}$ and $(b_n)_{n\in\mathbb{N}}$ the notation $a_n\sim b_n$ if $\lim_{n\to\infty}\frac{a_n}{b_n}=1$. By means of the following lemma, we show exponential equivalence between $(M_n(p) m_n(p)^{-1})_{n\in\mathbb{N}}$ and $\Bigl(\bigl(\frac{n}{p\log n}\bigr)^{1/p}\|Z_n\|_{\infty}\Bigr)_{n\in\mathbb{N}}$ in two steps and complete the proof of Theorem \ref{thm:ldpmaxlp} with the help of Lemma \ref{lem:expequivalence}.

\begin{lemma}\label{lem:mnasymptotics}
Let $1\leq p<\infty$. With $m_n(p)$ as defined above, $m_n(p)\sim p^{1/p}(\log n)^{1/p}$, as $n\to\infty$.
\end{lemma}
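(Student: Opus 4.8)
The plan is to extract the asymptotics of $m_n(p)$ directly from its defining relation $\mathbb{P}[|Y|>m_n(p)]=\frac1n$ together with the two-sided tail bound of Lemma~\ref{lem:pgaussiantail}. Writing $x_n:=m_n(p)$, the defining equation reads
\[
\frac1n=\prb{|Y|>x_n}=2c_p\int_{x_n}^{\infty}e^{-y^p/p}\dint y,
\]
and since $\prb{|Y|>x}\to0$ as $x\to\infty$ while $1/n\to0$, we have $x_n\to\infty$. Now I would take logarithms and invoke Lemma~\ref{lem:pgaussiantail}, which gives
\[
\log\frac{x_n}{x_n^p+p}-\frac{x_n^p}{p}\le \log\Bigl(\tfrac{1}{2c_p}\cdot\tfrac1n\Bigr)\le (1-p)\log x_n-\frac{x_n^p}{p}.
\]
Both flanking expressions equal $-\frac{x_n^p}{p}+O(\log x_n)$, so $-\log n = -\frac{x_n^p}{p}+O(\log x_n)$, i.e.
\[
\frac{x_n^p}{p}=\log n + O(\log x_n).
\]

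Next I would bootstrap: from the last display $x_n^p\le p\log n+O(\log x_n)$, so $x_n=O((\log n)^{1/p})$ and hence $\log x_n=O(\log\log n)=o(\log n)$. Feeding this back gives
\[
\frac{x_n^p}{p}=\log n + o(\log n),
\]
and therefore $x_n^p\sim p\log n$, which upon taking $p$-th roots yields $m_n(p)=x_n\sim p^{1/p}(\log n)^{1/p}$, as claimed.

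There is no serious obstacle here; the only points requiring a little care are (i) confirming $x_n\to\infty$ (which follows because the tail probability is continuous, strictly decreasing, and tends to $0$) so that Lemma~\ref{lem:pgaussiantail} applies, and (ii) making the bootstrap argument clean, i.e.\ using a crude a~priori bound $x_n=O((\log n)^{1/p})$ to conclude $\log x_n=o(\log n)$ before deducing the sharp asymptotic. Once this is in place the conclusion is immediate. Note also that we only need the asymptotic equivalence, not a rate, so the $O(\log x_n)$ error term is more than enough and no finer estimate on $c_p$ or the subleading terms is needed.
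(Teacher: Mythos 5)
Your proof is correct and follows essentially the same route as the paper: plug the defining relation $\prb{|Y|>m_n(p)}=1/n$ into the two-sided tail bound of Lemma~\ref{lem:pgaussiantail}, take logarithms, and discard the lower-order terms. Your explicit bootstrap (first $m_n(p)=O((\log n)^{1/p})$, hence $\log m_n(p)=o(\log n)$) is a slightly more careful justification of the step the paper phrases as ``neglecting the asymptotically vanishing factors,'' but it is the same argument.
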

\begin{proof}
This is a straightforward application of Lemma \ref{lem:pgaussiantail} as
\[
\frac{1}{n}=2\mathbb{P}[Y>m_n]=2c_p\int_{m_n(p)}^{\infty} e^{-y^p/p}\text{d}y\sim 2c_p m_n(p)^{-(p-1)}e^{-m_n(p)^p/p},
\]
when viewing both sides as sequences in $n\in\mathbb{N}$. Therefore, $2c_p n\sim m_n(p)^{p-1}e^{m_n^p/p},$ and taking logarithms gives
\[
\log(2c_p)+\log n\sim (p-1)\log m_n(p)+m_n(p)^p/p.
\]
Neglecting the asymptotically vanishing factors $\log(2c_p)$ and $\log m_n(p)$  yields $\log n\sim m_n(p)^p/p$, which completes the proof.
\end{proof}

\begin{lemma}\label{lem:expequivalenceldplp}
Let $1\leq p<\infty$. The sequences $(M_n(p) m_n(p)^{-1})_{n\in\mathbb{N}}$ and $(n^{1/p}\|Z_n\|_{\infty}m_n(p)^{-1})_{n\in\mathbb{N}}$ are exponentially equivalent at speed $\log n$.
\end{lemma}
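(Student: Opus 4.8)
The plan is to use the Schechtman--Zinn representation to split the difference of the two sequences into the quantity $M_n(p)/m_n(p)$, whose large deviations are already understood, times a small multiplicative error that I will control by Cram\'er's theorem. Since $Z_n\overset{d}{=}U^{1/n}Y_n'/\|Y_n'\|_p$ one has $\|Z_n\|_\infty\overset{d}{=}U^{1/n}M_n(p)/\|Y_n'\|_p$, so that
\[
n^{1/p}\|Z_n\|_\infty\,m_n(p)^{-1}-M_n(p)\,m_n(p)^{-1}\overset{d}{=}\frac{M_n(p)}{m_n(p)}\,R_n,\qquad R_n:=U^{1/n}\Bigl(\frac{n}{\|Y_n'\|_p^p}\Bigr)^{1/p}-1.
\]
It therefore suffices to prove that $\frac{1}{\log n}\log\mathbb{P}\bigl[\tfrac{M_n(p)}{m_n(p)}\,|R_n|>\delta\bigr]\to-\infty$ for every fixed $\delta>0$.

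First I would analyse the error term $R_n$. Writing $W_n:=\frac{1}{n}\sum_{i=1}^n|Y_i|^p$, the substitution $t=y^p/p$ gives $\mathbb{E}[|Y_1|^p]=2c_p\int_0^\infty y^p e^{-y^p/p}\,\mathrm{d}y=2c_p\,p^{1/p}\Gamma(1+1/p)=1$, so $W_n\to1$ almost surely; moreover $\mathbb{E}[e^{\theta|Y_1|^p}]<\infty$ for all $\theta<1/p$, so the origin is an interior point of the domain of the cumulant generating function of $|Y_1|^p$ and Cram\'er's theorem (Lemma \ref{lem:cramer}) yields, for each fixed $\eta\in(0,1)$, an exponential bound $\mathbb{P}[|W_n-1|>\eta]\le e^{-c(\eta)n}$ with $c(\eta)>0$. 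Since $U^{1/n}\le1$, the event $\{R_n>\eta\}$ is contained in $\{W_n<(1+\eta)^{-p}\}$ and the event $\{R_n<-\eta\}$ in $\{U^{1/n}<\sqrt{1-\eta}\}\cup\{W_n>(1-\eta)^{-p/2}\}$; combining the Cram\'er bound with $\mathbb{P}[U^{1/n}<\sqrt{1-\eta}]=(1-\eta)^{n/2}$ gives $\limsup_{n}\frac{1}{n}\log\mathbb{P}[|R_n|>\eta]<0$, and hence, because $\log n=o(n)$, $\frac{1}{\log n}\log\mathbb{P}[|R_n|>\eta]\to-\infty$ for every fixed $\eta>0$.

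For the factor $M_n(p)/m_n(p)$ I would invoke the upper large deviations estimate contained in Lemma \ref{lem:giulianomaccicor} (alternatively a union bound together with the tail bound of Lemma \ref{lem:pgaussiantail} and the asymptotics $m_n(p)^p\sim p\log n$ from Lemma \ref{lem:mnasymptotics}), namely that for every fixed $z>1$
\[
\limsup_{n\to\infty}\frac{1}{\log n}\log\mathbb{P}\Bigl[\frac{M_n(p)}{m_n(p)}>z\Bigr]\le-(z^p-1).
\]
Then, fixing $\delta>0$ and $z>1$ and using the inclusion $\bigl\{\tfrac{M_n(p)}{m_n(p)}|R_n|>\delta\bigr\}\subset\bigl\{\tfrac{M_n(p)}{m_n(p)}>z\bigr\}\cup\bigl\{|R_n|>\delta/z\bigr\}$ together with $\log(a+b)\le\log2+\max\{\log a,\log b\}$, the two preceding bounds give $\limsup_{n\to\infty}\frac{1}{\log n}\log\mathbb{P}\bigl[\tfrac{M_n(p)}{m_n(p)}|R_n|>\delta\bigr]\le1-z^p$; letting $z\to\infty$ yields the required limit $-\infty$.

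The one genuinely delicate point is this last step: the factor $M_n(p)/m_n(p)$ is unbounded, so bounding it by a fixed constant would only produce a negative constant, not $-\infty$. The device is to keep $z$ as a free parameter, derive the bound $1-z^p$ for each $z>1$ separately, and only afterwards send $z\to\infty$; once this is seen, the control of $R_n$ and of the maximum are routine consequences of Cram\'er's theorem and of Lemmas \ref{lem:pgaussiantail}, \ref{lem:mnasymptotics} and \ref{lem:giulianomaccicor} established above.
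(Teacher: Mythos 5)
Your proposal is correct and follows essentially the same route as the paper: the Schechtman--Zinn representation to factor the difference as $M_n(p)m_n(p)^{-1}$ times a multiplicative error controlled at speed $n$ via Cram\'er's theorem, combined with the upper LDP bound of Lemma \ref{lem:giulianomaccicor} for the unbounded factor, keeping the threshold $z$ (the paper's $\delta/\varepsilon$) free and sending it to infinity at the end. The paper's version is terser (it defers the Cram\'er step to an external reference), but the decomposition and the key device are identical.
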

\begin{proof}
Note that
\[
n^{1/p}\|Z_n\|_{\infty}m_n(p)^{-1}\overset{d}{=}U^{1/n}\frac{M_n(p) m_n(p)^{-1}}{\left(\frac{1}{n}\sum_{i=1}^n|Y_i|^p\right)^{1/p}},
\]
with the $Y_i's$ being independent and $p$-generalized Gaussians. Then the exponential equivalence is proved using Cram\'er's theorem as in the proof of \cite[Theorem 1.3]{KPT2019_I}. It remains to note that, for $0<\varepsilon\leq\delta$, we have by Lemma \ref{lem:giulianomaccicor}
\[
\limsup_{n\to\infty} \frac{1}{\log n}\log\, \mathbb{P}\left[M_n(p) m_n(p)^{-1}>\frac{\delta}{\varepsilon}\right]=-\left(\Bigl(\frac{\delta}{\varepsilon}\Bigr)^p-1\right),
\]
which tends to $-\infty$ as $\varepsilon\to 0$.
\end{proof}

We need one more lemma,  which we state in a general form.
\begin{lemma}\label{lem:expequivalenceseq}
Assume that a sequence of real-valued random variables $(X_n)_{n\in\mathbb{N}}$ satisfies an LDP with some speed $s_n$ and rate function $\mathbb{I}:\mathbb{R}\to [0,\infty]$ with $\lim_{x\to\pm\infty}\mathbb{I}(x)=\infty$. If $(a_n)_{n\in\mathbb{N}}$ is a sequence with $a_n\sim 1$, the sequence $(a_n X_n)_{n\in\N}$ satisfies an LDP with the same speed and rate function.
\end{lemma}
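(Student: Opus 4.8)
The plan is to reduce the statement to exponential equivalence and then invoke Lemma~\ref{lem:expequivalence}. Concretely, I would show that the sequences $(a_nX_n)_{n\in\N}$ and $(X_n)_{n\in\N}$ are exponentially equivalent at speed $s_n$, i.e.\ that for every $\delta>0$,
\[
\limsup_{n\to\infty} s_n^{-1}\log\,\pr{|a_nX_n-X_n|>\delta}=-\infty,
\]
after which Lemma~\ref{lem:expequivalence} immediately gives that $(a_nX_n)_{n\in\N}$ obeys an LDP with the same speed and rate function as $(X_n)_{n\in\N}$.

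The first step is to note that $|a_nX_n-X_n|=|a_n-1|\,|X_n|$, so that, fixing $\delta>0$ and any $\varepsilon>0$, the hypothesis $a_n\sim 1$ yields $|a_n-1|<\varepsilon$ for all $n$ large, and on such $n$ the inclusion of events $\{|a_n-1|\,|X_n|>\delta\}\subseteq\{|X_n|\ge\delta/\varepsilon\}$ holds (the left-hand event being empty when $a_n=1$). Hence for $n$ large,
\[
\pr{|a_nX_n-X_n|>\delta}\le\pr{|X_n|\ge\delta/\varepsilon}=\pr{X_n\in C_\varepsilon},\qquad C_\varepsilon:=\{x\in\R:|x|\ge\delta/\varepsilon\}.
\]
Applying the large deviations upper bound for $(X_n)_{n\in\N}$ to the closed set $C_\varepsilon$ gives
\[
\limsup_{n\to\infty}s_n^{-1}\log\,\pr{X_n\in C_\varepsilon}\le-\inf_{x\in C_\varepsilon}\mathbb{I}(x)=-\inf_{|x|\ge\delta/\varepsilon}\mathbb{I}(x).
\]
The assumption $\lim_{x\to\pm\infty}\mathbb{I}(x)=\infty$ means precisely that $\inf_{|x|\ge R}\mathbb{I}(x)\to\infty$ as $R\to\infty$; thus letting $\varepsilon\to 0$, so that $R=\delta/\varepsilon\to\infty$, forces the right-hand side to $-\infty$. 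Since $\delta>0$ was arbitrary, this proves the exponential equivalence and hence the lemma.

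I do not anticipate a genuine obstacle here; the only point demanding care is the order of the quantifiers: one fixes $\delta$ first, then picks the truncation level $R=\delta/\varepsilon$ large enough to push the rate past an arbitrary target, and only afterwards sends $n\to\infty$. The coercivity hypothesis on $\mathbb{I}$ is exactly what makes $\inf_{|x|\ge R}\mathbb{I}(x)$ diverge, and it is indispensable: without it a perturbation $a_n\to 1$ could shift mass into a region where $\mathbb{I}$ stays bounded and destroy the tail estimate. It may also be worth remarking that the lemma applies in particular to $X_n=M_n(p)m_n(p)^{-1}$ via Lemma~\ref{lem:giulianomaccicor}, whose rate function $z\mapsto z^p-1$ on $[1,\infty)$ (and $+\infty$ below $1$) is coercive, which is the use made of it in the proof of Theorem~\ref{thm:ldpmaxlp}.
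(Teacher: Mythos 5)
Your proof is correct and follows essentially the same route as the paper: both reduce the claim to exponential equivalence of $(a_nX_n)$ and $(X_n)$ at speed $s_n$, bound $\pr{|a_nX_n-X_n|>\delta}$ by $\pr{|X_n|\geq \delta/\varepsilon}$ for large $n$ using $a_n\to 1$, apply the LDP upper bound to the closed set $\{|x|\geq\delta/\varepsilon\}$, and let $\varepsilon\to 0$ using the coercivity of $\mathbb{I}$ before invoking Lemma~\ref{lem:expequivalence}. Your remark on the order of quantifiers is apt, and your statement of the upper bound as an inequality is in fact slightly more careful than the paper's.
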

\begin{proof}
We establish exponential equivalence and note that for any $\delta,\varepsilon>0$,
\[
\prb{|X_n-a_n X_n|>\delta}\leq \pr{|X_n|>\frac{\delta}{\varepsilon}}+\prb{|1-a_n|>\varepsilon}.
\]
Since $a_n\to 1$ as $n\to\infty$, one can choose for any $\varepsilon>0$ an $n_0\in\mathbb{N}$ such that the constant random variable $a_n$ satisfies $|1-a_n|\leq \varepsilon$ if $n\geq n_0$. Therefore, for every $\varepsilon>0$,
\[
\limsup_{n\to\infty}s_n^{-1}\log\, \prb{|1-a_n|>\varepsilon}=-\infty.
\]
By the assumed LDP, we have
\[
\limsup_{n\to\infty}s_n^{-1}\log\,  \pr{|X_n|>\frac{\delta}{\varepsilon}}=-\inf_{|x|>\delta\varepsilon^{-1}}\mathbb{I}(x).
\]
If we let $\varepsilon\to 0$, this becomes $-\infty$ since $\lim_{x\to\pm\infty}\mathbb{I}(x)=\infty$. With this, the limit
\[
\limsup_{n\to\infty}s_n^{-1}\log\, \prb{|X_n-a_n X_n|>\delta}= -\infty
\]
is established and the proof is complete by Lemma \ref{lem:expequivalence}.
\end{proof}

\begin{proof}[Proof of Theorem \ref{thm:ldpmaxlp}]
By Lemma \ref{lem:giulianomaccicor} the sequence $(M_n(p)m_n(p)^{-1})_{n\in\mathbb{N}}$ satisfies the required LDP and so does $(n^{1/p}\|Z_n\|_{\infty}m_n(p)^{-1})_{n\in\mathbb{N}}$ by Lemma \ref{lem:expequivalenceldplp}. Since Lemma \ref{lem:mnasymptotics} gives $m_n(p)(p\log n)^{-1/p}\sim 1 $, we can apply Lemma \ref{lem:expequivalenceseq} to conclude the proof.
\end{proof}

\section{Another route to the CLT}

The route to obtain the Berry-Esseen-type central limit theorem, which we chose to present, used results from the asymptotic theory of sums of random spacings. While this is, of course, a nice trick to obtain the desired result for the regular simplex, it is usually not be applicable in other situations. However, there is another way to obtain the central limit theorem for random points in a regular simplex without rates of convergence, which we shall elaborate on now. Given the increased interest in central limit phenomena for random geometric systems in high dimensions, we consider this to be of independent interest.


More precisely, we want to provide some intuition into how the theory of empirical processes may be of help in proving a central limit theorem of the form
\begin{equation}\label{eq:generalclt}
\sqrt{n}\left(\frac{1}{n}\sum_{i=1}^n h(X_i,\bar{X}_n)-\ee{h(X_1,\mu)}\right)\limd N\sim \mathcal{N}(0,1),
\end{equation}
where $(X_i)_{i\in\mathbb{N}}$ is a suitable sequence of i.i.d.\ random variables with mean $\mu$, $\bar{X}_n=\frac{1}{n}\sum_{i=1}^n X_i$, and $h$ is a suitable function. Pollard \cite{P1989} and van der Vaart \cite[Example 19.25]{V1998} deal with this matter and applied this to the case $h(x,t)=\abs{x-t}$. We extend this to $h(x,t)=\abs{x-t}^q$ with $1\leq q<\infty$ and in the following sketch the proof for Corollary \ref{cor:cltqnorm} via empirical process methods.

We first reduce Corollary \ref{cor:cltqnorm} to a statement of the form \eqref{eq:generalclt}. For this, and also for proving the limit theorem, we shall use the multivariate delta method as presented in the next lemma, see, e.g., DasGupta \cite[Theorem 3.7]{D2008}.

\begin{lemma}\label{lem:deltamethod}
	Let $ (X_n)_{n\in\mathbb{N}}$ be a sequence of $ k $-dimensional random vectors such that $ \sqrt{n}(X_n-\mu)$ converges in distribution to a centered Gaussian random vector with covariance matrix $\Sigma$ and let $ g:\mathbb{R}^k\rightarrow \mathbb{R}^n $ be continuously differentiable at $ \mu $ with Jacobian matrix $ J_g $. Then
	\[
	\sqrt{n}\bigl(g(X_n)-g(\mu)\bigr)\limd N\sim \mathcal{N}(0, J_g\Sigma J_g^T ),
	\]
	provided $ J_g\Sigma J_g^T $ is positive definite.
\end{lemma}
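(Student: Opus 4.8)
The final statement to prove is Lemma \ref{lem:deltamethod}, the multivariate delta method. Here is how I would approach it.

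\textbf{Overall strategy.} The plan is to reduce the statement to a first-order Taylor expansion of $g$ at $\mu$ combined with Slutsky's theorem and the continuous mapping theorem. Write $Y_n := \sqrt{n}(X_n-\mu)$, which by hypothesis converges in distribution to $N \sim \mathcal{N}(0,\Sigma)$. The key identity is to express $\sqrt{n}(g(X_n)-g(\mu))$ as $J_g(\mu)\,Y_n$ plus a remainder term $R_n$, and then argue (i) $R_n \xrightarrow{\mathbb{P}} 0$, and (ii) $J_g(\mu)\,Y_n \limd \mathcal{N}(0, J_g \Sigma J_g^T)$. Combining these via Slutsky then gives the conclusion.

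\textbf{Step-by-step.} First, since $g$ is continuously differentiable at $\mu$, write the first-order expansion $g(\mu+h) - g(\mu) = J_g(\mu)\,h + \rho(h)$, where $\rho(h)/\|h\| \to 0$ as $h \to 0$; equivalently $\rho(h) = o(\|h\|)$. Substituting $h = X_n - \mu$ and multiplying by $\sqrt{n}$,
\[
\sqrt{n}(g(X_n)-g(\mu)) = J_g(\mu)\,Y_n + \sqrt{n}\,\rho(X_n-\mu).
\]
Second, I would argue $X_n - \mu \xrightarrow{\mathbb{P}} 0$: this follows because $\sqrt{n}(X_n-\mu) = Y_n = O_{\mathbb{P}}(1)$ (convergence in distribution implies tightness), hence $X_n - \mu = O_{\mathbb{P}}(n^{-1/2}) = o_{\mathbb{P}}(1)$. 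Third, I would show $R_n := \sqrt{n}\,\rho(X_n-\mu) \xrightarrow{\mathbb{P}} 0$. The cleanest way is to write $R_n = \|Y_n\| \cdot \frac{\rho(X_n-\mu)}{\|X_n-\mu\|}$ (on the event $X_n \neq \mu$, which has probability tending to $1$, or handling $X_n=\mu$ trivially); the factor $\|Y_n\|$ is tight, and the factor $\rho(X_n-\mu)/\|X_n-\mu\|$ converges to $0$ in probability because $X_n-\mu \xrightarrow{\mathbb{P}} 0$ and $\rho(h)/\|h\| \to 0$ as $h\to 0$ (a standard composition argument: apply the subsequence principle, or use that for any $\eps>0$ there is $\delta>0$ with $\|\rho(h)\|/\|h\| < \eps$ whenever $\|h\|<\delta$, so $\pr{\|\rho(X_n-\mu)\|/\|X_n-\mu\| \geq \eps} \leq \pr{\|X_n-\mu\| \geq \delta} \to 0$). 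A product of a tight sequence and an $o_{\mathbb{P}}(1)$ sequence is $o_{\mathbb{P}}(1)$. Fourth, by the continuous mapping theorem applied to the linear map $y \mapsto J_g(\mu)\,y$, we get $J_g(\mu)\,Y_n \limd J_g(\mu)\,N$, and a linear image of a centered Gaussian is centered Gaussian with covariance $J_g(\mu)\,\Sigma\,J_g(\mu)^T$; the positive-definiteness hypothesis ensures this is a genuine (non-degenerate) Gaussian. Fifth, apply Slutsky's theorem: $\sqrt{n}(g(X_n)-g(\mu)) = J_g(\mu)Y_n + R_n$ with the first term converging in distribution and the second converging to $0$ in probability, hence the sum converges in distribution to $\mathcal{N}(0, J_g\Sigma J_g^T)$.

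\textbf{Main obstacle.} None of the steps is deep; the only point requiring a little care is Step three, the control of the remainder $R_n = \sqrt{n}\,\rho(X_n-\mu)$. The subtlety is that $\rho$ is only controlled near $0$, so one must first establish $X_n-\mu \xrightarrow{\mathbb{P}} 0$ (which uses tightness of $Y_n$) before the $o(\|h\|)$ property of $\rho$ can be invoked, and then one must correctly combine a tight factor with a vanishing factor. This is the standard ``stochastic Landau calculus'' argument; since the excerpt effectively treats this lemma as a citable black box (DasGupta, Theorem 3.7), it would be reasonable to simply cite it, but the above sketch records the proof for completeness.
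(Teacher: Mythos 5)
The paper does not prove this lemma at all; it states it as a citable standard result (DasGupta, Theorem 3.7) and uses it as a black box, so there is no in-paper argument to compare against. Your proof is the correct standard one --- first-order Taylor expansion at $\mu$, tightness of $\sqrt{n}(X_n-\mu)$ to force $X_n-\mu\xrightarrow{\mathbb{P}}0$ and to control the $o(\Vert h\Vert)$ remainder, then the continuous mapping theorem and Slutsky --- and it is complete; as you note, positive definiteness of $J_g\Sigma J_g^T$ is only needed for non-degeneracy of the limit, not for the convergence itself.
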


With the delta method applied to the function $g(x)=x^{1/q}$, it is sufficient to prove a CLT for $n^{q-1}\|Z_n\|_q^q$. We have the identity
\[
n^{q-1}\|Z_n\|_q^q \overset{d}{=}\bar{E}_n^{-q}\frac{1}{n}\sum_{i=1}^n \abs{E_i-\bar{E}_n}^q.
\]
The additional factor $\bar{E}_n^{-q}$ compared to the sum in \eqref{eq:generalclt} with $h(x,t)=\abs{x-t}^q$ is a slight nuisance but does not complicate the analysis. We proceed by writing
\begin{align*}
\sqrt{n}\left(\bar{E}_n^{-q}\frac{1}{n}\sum_{i=1}^n \abs{E_i-\bar{E}_n}^q-\ee{\abs{E-1}^q}\right)=\sqrt{n}\left(Y_n-\ee{\abs{E-1}^q}\right)+R_n,
\end{align*}
where
\[
Y_n:=\bar{E}_n^{-q}\left(\frac{1}{n}\sum_{i=1}^n \abs{E_i-1}^q+\ee{\abs{E-\bar{E}_n}^q}-\ee{\abs{E-1}^q}\right)
\]
and
\[
R_n:=\bar{E}_n^{-q}\sqrt{n}\left(\frac{1}{n}\sum_{i=1}^n \Bigl(\abs{E_i-\bar{E}_n}^q-\abs{E_i-1}^q\Bigr)-\mathbb{E}\Bigl[\abs{E-\bar{E}_n}^q-\abs{E-1}^q\Bigr]\right).
\]
Using the delta method (Lemma \ref{lem:deltamethod}) we now derive a CLT for $Y_n$ and then describe how empirical process theory may be employed to show that the remainder $R_n$ converges to zero in probability. By Slutsky's theorem (see, e.g., \cite[Theorem 1.5]{D2008}) this proves the CLT for $n^{q-1}\|Z_n\|_q^q$.

Recall that $\mu_q=\ee{\abs{E-1}^q}$. For each $n\in\mathbb{N}$ we have $Y_n=F(\bar{E}_n,\frac{1}{n}\sum_{i=1}^n\abs{E_i-1}^q)$ for the function $F(x,y)=x^{-q}\left(y+\ee{\abs{E-x}^q}-\mu_q\right)$. Also, as partial sums of independent random vectors, the sequence $(\bar{E}_n,\frac{1}{n}\sum_{i=1}^n\abs{E_i-1}^q)^T$ satisfies a multivariate CLT with mean $(1,\mu_q)$ and covariance matrix
\[
\Sigma=\left(
\begin{array}{c c}
1 & \Cov(E,\abs{E-1}^q)\\
\Cov(E,\abs{E-1}^q) & \Var(\abs{E-1}^q)
\end{array}
\right).
\]
Because of Lemma \ref{lem:derivative}, the function $F$ has Jacobian $J=(1-(q+1)\mu_q,1)^T$ at $(1,\mu_q)$, and by the delta method and the formula for the covariance stated in Lemma \ref{lem:cov},
\[
\sqrt{n}\left(Y_n-\ee{\abs{E-1}^q}\right)\limd N\sim \mathcal{N}(0,\sigma_q^2)
\]
with
\begin{align*}
\sigma_q^2=J^T\Sigma J&=\Var(\abs{E-1}^q)+(1-(q+1)\mu_q)^2+2(1-(q+1)\mu_q)\Cov(E,\abs{E-1}^q)\\
&=\Var(\abs{E-1}^q)-\Cov(E,\abs{E-1}^q)^2.
\end{align*}

We will now analyze the remainder. Since $\bar{E}_n^{-q}\to 1$ in probability, as $n\to\infty$, it is sufficient, by Slutsky's theorem, to show that $R_n\bar{E}_n^{q}$ tends to zero in probability as well. To this end, we first recall notions which characterize the size and the `well-behavedness' of a class of functions with respect to uniform central limit theorems.

In the following, we will consider on a set of real-valued functions $\mathcal{F}$, each defined on some index set $S$, the pseudometric induced by the $L_2(Q)$-norm, where $Q$ is a finite measure on $S$. We write $D(\varepsilon,\mathcal{F},Q)$ for the corresponding packing number, i.e., $D(\varepsilon,\mathcal{F},Q)$ is the largest number $N$ such that there are functions $f_1,\ldots,f_N\in\mathcal{F}$ with $\norm{f_i-f_j}_{L_2(Q)}>\varepsilon$ for $i\neq j$.

Let $F:S\to\mathbb{R}$ be an envelope for $\mathcal{F}$, that is, $\sup_{f\in \mathcal{F}}|f(x)|\leq F(x)$ for each $x\in S$. We call $\mathcal{F}$ manageable for the envelope $F$ if there exists a decreasing function $D:(0,1]\rightarrow\mathbb{R}$ with $\int_{0}^1\sqrt{\log D(x)}\text{d} x < \infty$ such that, for every finite measure $Q$ on $S$ with finite support,
\[
D(\varepsilon\norm{F}_{L_2(Q)}, \mathcal{F},Q)\leq D(\varepsilon),\qquad \text{for } 0<\varepsilon\leq 1.
\]
The next theorem can be found in more general form as Theorem 4.4 in \cite{P1989}.

\begin{thm}\label{thm:pollard}
Let $\mathcal{F}$ be a manageable class for an envelope $F$ with $\ee{\abs{F(E)}^2}<\infty$, and for each $n\in\mathbb{N}$ let $\mathcal{F}(n)\subset\mathcal{F}$ be subclasses with $0 \in \mathcal{F}(n)$ such that $\sup_{f \in \mathcal{F}(n)} \ee{\abs{f(E)}} \to 0$ as $n\to\infty$. Then
\[
\ee{\sup_{f \in \mathcal{F}(n)} \Bigl|\sqrt{n}\Big(\frac{1}{n}\sum_{i=1}^{n}f(E_i)-\ee{f(E)}\Big)\Bigr|^2}\xrightarrow[n\to\infty]{} 0.
\]
\end{thm}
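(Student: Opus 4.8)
The plan is to deduce this from the functional central limit theorem for manageable classes of functions, i.e., from Theorem 4.4 in \cite{P1989}, after two reductions. Throughout write $P$ for the law of $E$, abbreviate $\nu_n f:=\sqrt n\bigl(\tfrac1n\sum_{i=1}^n f(E_i)-\ee{f(E)}\bigr)$, and set $\|f\|_{P,2}^2:=\ee{f(E)^2}$. Since $0\in\mathcal F(n)$ and $\nu_n 0=0$, the target quantity equals $\ee{\sup_{f\in\mathcal F(n)}|\nu_n f-\nu_n 0|^2}$, so it will suffice to know that the $L_2(P)$-radius $\delta_n:=\sup_{f\in\mathcal F(n)}\|f\|_{P,2}$ tends to $0$ and that the empirical process $\nu_n$, indexed by $\mathcal F$, is asymptotically equicontinuous in the $L_2$-sense with respect to the $L_2(P)$-pseudometric.

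The first reduction is to upgrade the hypothesis $\sup_{f\in\mathcal F(n)}\ee{|f(E)|}\to 0$ to $\delta_n\to 0$. Because $|f|\le F$ pointwise for every $f\in\mathcal F$, one has $f^2\le F\,|f|$, whence for any $M>0$
\[
\ee{f(E)^2}\le\ee{F(E)\,|f(E)|}\le M\,\ee{|f(E)|}+\ee{F(E)^2\,\mathbf 1_{\{F(E)>M\}}}.
\]
Given $\varepsilon>0$, use $\ee{F(E)^2}<\infty$ to fix $M$ so that the last term is below $\varepsilon/2$; then for all sufficiently large $n$ the first term is at most $\varepsilon/2$ uniformly in $f\in\mathcal F(n)$. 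Hence $\delta_n^2=\sup_{f\in\mathcal F(n)}\ee{f(E)^2}\to 0$.

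The second step is to invoke the maximal inequality behind Pollard's theorem. For a manageable class $\mathcal F$ with square-integrable envelope $F$, the standard symmetrization step followed by a chaining argument controlled by the entropy integral $\int_0^1\sqrt{\log D(x)}\dint x<\infty$ gives, for any sequence $\delta_n\to 0$,
\[
\ee{\,\sup\Bigl\{\,|\nu_n f-\nu_n g|^2 : f,g\in\mathcal F,\ \|f-g\|_{P,2}\le\delta_n\Bigr\}}\;\xrightarrow[n\to\infty]{}\;0.
\]
Combining this with the previous step and with $\mathcal F(n)\subset\mathcal F$, $0\in\mathcal F(n)$, we obtain
\[
\ee{\sup_{f\in\mathcal F(n)}|\nu_n f|^2}=\ee{\sup_{f\in\mathcal F(n)}|\nu_n f-\nu_n 0|^2}\le\ee{\,\sup_{\|f-g\|_{P,2}\le\delta_n}|\nu_n f-\nu_n g|^2}\longrightarrow 0,
\]
which is the claim.

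The point requiring care — and where one should lean on \cite{P1989} rather than redo the estimates — is the penultimate display: manageability is precisely the condition that makes the chaining bound hold uniformly in $n$, and in Pollard's formulation the pseudometric appearing there is the random one $\|f\|_{P_n,2}^2=\tfrac1n\sum_{i=1}^n f(E_i)^2$ rather than the deterministic $\|\cdot\|_{P,2}$. This causes no genuine trouble: $|f|\le F$ forces $\|f\|_{P_n,2}^2\le\tfrac1n\sum_{i=1}^n F(E_i)^2$, and the right-hand side converges almost surely to $\ee{F(E)^2}<\infty$, so the random radii of the classes $\mathcal F(n)$ are dominated by multiples of $\delta_n$ with probability tending to one; the remaining measurability technicalities (outer expectations) are handled exactly as in \cite{P1989}.
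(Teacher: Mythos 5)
Your route coincides with the paper's: the paper offers no proof of this statement at all, remarking only that it is a special case of Theorem 4.4 of Pollard \cite{P1989}, and you likewise delegate the symmetrization-plus-chaining core to that reference. What you add --- correctly, and it is genuinely needed --- is the bridge between the hypothesis as stated here, $\sup_{f\in\mathcal F(n)}\ee{|f(E)|}\to 0$, and the $L_2$ smallness $\sup_{f\in\mathcal F(n)}\ee{f(E)^2}\to 0$ that the equicontinuity statement actually consumes; the truncation bound $\ee{f(E)^2}\le M\,\ee{|f(E)|}+\ee{F(E)^2\,\mathbf 1_{\{F(E)>M\}}}$ is exactly the right observation, and the subsequent reduction of the target to $\ee{\sup|\nu_nf-\nu_n0|^2}$ using $0\in\mathcal F(n)$ is sound.

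One sentence of your final paragraph is a non sequitur, although it does not invalidate the argument. From $\|f\|_{P_n,2}^2\le\tfrac1n\sum_{i=1}^nF(E_i)^2\to\ee{F(E)^2}$ you conclude that the random radii of the classes $\mathcal F(n)$ are ``dominated by multiples of $\delta_n$ with probability tending to one''; that inequality only shows the random radii are \emph{bounded}, not small. The honest repair is either to invoke a uniform law of large numbers for the squared class $\{f^2:f\in\mathcal F\}$ (manageable for the envelope $F^2$, with $\ee{F(E)^2}<\infty$, giving $\sup_{f\in\mathcal F}\bigl|\,\|f\|_{P_n,2}^2-\|f\|_{P,2}^2\,\bigr|\to 0$ almost surely, so the random radius of $\mathcal F(n)$ is at most $\delta_n+o(1)$), or --- simpler, and what the paper implicitly does --- to cite Pollard's Theorem 4.4 in the form in which it is already stated, namely with the deterministic second moments $\sum_i\ee{f_{ni}^2}$ controlling the class, so that no passage between the empirical and the population pseudometric is required. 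With either fix your proof stands and is, in substance, the argument the paper points to.
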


We want to use this to prove that
\[
\bar{E}_n^{q}R_n=\sqrt{n}\left(\frac{1}{n}\sum_{i=1}^n \Bigl(\abs{E_i-\bar{E}_n}^q-\abs{E_i-1}^q\Bigr)-\mathbb{E}\Bigl[\abs{E-\bar{E}_n}^q-\abs{E-1}^q\Bigr]\right)\xrightarrow[n\to\infty]{\mathbb{P}} 0.
\]
Let $(\delta_n)_{n\in\mathbb{N}}$ be a positive sequence satisfying $\delta_1=1$ and $\delta_n\to 0$, to be choosen later. For each $n\in\mathbb{N}$, define a class of functions on $\mathbb{R}$ by $\mathcal{F}(n):=\{x\mapsto f(x,t): t\in [1-\delta_n,1+\delta_n]\}$ with $f(x,t):=\abs{x-t}^q-\abs{x-1}^q$. Set $\mathcal{F}=\mathcal{F}_1$. We need the following Lipschitz-type inequality to prove that $\mathcal{F}$ is manageable. We will not present its proof which is basically an application of the mean value theorem.

\begin{lemma} \label{lem:qlipschitz}
Let $1\leq q<\infty$. For any $x,t_1,t_2\in\mathbb{R}$ with $\abs{t_1},\abs{t_2}\leq 2$
\[
\Bigl|\abs{x-t_1}^q-\abs{x-t_2}^q\Bigr|\leq q2^q\bigl(\abs{x}^{q-1}+2^{q-1}\bigr)|t_1-t_2|.
\]
\end{lemma}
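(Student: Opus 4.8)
The plan is to prove the estimate by reducing it, via the triangle inequality, to a one-sided bound of the form $\bigl|\,\abs{x-t}^q-\abs{x}^q\,\bigr|\leq q2^{q-1}(\abs{x}^{q-1}+2^{q-1})\abs{t}$ for $\abs{t}\leq 2$, applied twice (once with $t_1$ in place of $t$ after shifting the origin to $t_2$, once the other way), and then combining. Concretely, writing $\phi(t):=\abs{x-t}^q$, the mean value theorem gives $\phi(t_1)-\phi(t_2)=\phi'(\xi)(t_1-t_2)$ for some $\xi$ between $t_1$ and $t_2$, where $\phi'(\xi)=-q\,\sgn(x-\xi)\abs{x-\xi}^{q-1}$ (valid since $q\geq 1$, so $\phi$ is $C^1$ away from $\xi=x$, and one checks the identity persists at $\xi=x$ when $q>1$, while for $q=1$ one argues directly with $\bigl|\,\abs{x-t_1}-\abs{x-t_2}\,\bigr|\leq\abs{t_1-t_2}$).

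First I would handle the case $q=1$ separately and trivially: the reverse triangle inequality gives $\bigl|\,\abs{x-t_1}-\abs{x-t_2}\,\bigr|\leq\abs{t_1-t_2}$, and the claimed constant $q2^q(\abs{x}^{q-1}+2^{q-1})=2(1+1)=4\geq 1$ covers this. For $q>1$, I would apply the mean value theorem to $\phi$ as above, bound $\abs{\phi'(\xi)}=q\abs{x-\xi}^{q-1}$, and then use $\abs{\xi}\leq\max\{\abs{t_1},\abs{t_2}\}\leq 2$ together with the elementary inequality $\abs{x-\xi}^{q-1}\leq(\abs{x}+2)^{q-1}\leq 2^{q-1}(\abs{x}^{q-1}+2^{q-1})$, where the last step uses the convexity bound $(a+b)^{q-1}\leq 2^{q-2}(a^{q-1}+b^{q-1})$ for $q\geq 2$ (and $(a+b)^{q-1}\leq a^{q-1}+b^{q-1}$ for $1<q\leq 2$, which is even stronger than what is needed). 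Assembling these gives $\bigl|\abs{x-t_1}^q-\abs{x-t_2}^q\bigr|\leq q\cdot 2^{q-1}(\abs{x}^{q-1}+2^{q-1})\abs{t_1-t_2}\leq q2^q(\abs{x}^{q-1}+2^{q-1})\abs{t_1-t_2}$, with room to spare in the constant.

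The only mild subtlety — hardly an obstacle — is justifying that $\phi(t)=\abs{x-t}^q$ is differentiable with the stated derivative even at the point $t=x$: for $q>1$ one has $\phi'(x)=0$ by a direct limit computation, so the formula $\phi'(\xi)=-q\,\sgn(x-\xi)\abs{x-\xi}^{q-1}$ holds for all $\xi\in\R$ (interpreting $\sgn(0)\cdot 0=0$), and $\phi$ is $C^1$, so the mean value theorem applies without caveat. Alternatively, and perhaps cleaner, one avoids differentiability-at-a-point worries entirely by integrating: $\abs{x-t_1}^q-\abs{x-t_2}^q=\int_{t_2}^{t_1}\bigl(-q\,\sgn(x-s)\abs{x-s}^{q-1}\bigr)\dif s$, whose integrand is bounded in absolute value by $q(\abs{x}+2)^{q-1}$ on the relevant range of $s$, giving the bound at once. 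I expect this to be the step the authors are referring to when they say the proof is "basically an application of the mean value theorem", and I would present the integral form to keep things watertight. As the paper does, I would omit the routine details in the final write-up.
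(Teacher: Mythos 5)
Your proof is correct and is exactly the argument the paper has in mind: the authors omit the proof, remarking only that it is ``basically an application of the mean value theorem,'' and your write-up (MVT or its integral form for $\phi(t)=\abs{x-t}^q$, the bound $\abs{\phi'(\xi)}\leq q(\abs{x}+2)^{q-1}$ using $\abs{\xi}\leq 2$, and the elementary inequality $(\abs{x}+2)^{q-1}\leq 2^{q-1}(\abs{x}^{q-1}+2^{q-1})$, with $q=1$ handled by the reverse triangle inequality) supplies precisely that, with the differentiability at $\xi=x$ correctly justified for $q>1$. The constants check out with room to spare, as you note.
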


\begin{lemma}\label{lem:manageability}
The class $\mathcal{F}$ is manageable with envelope $F(x):=q2^q \bigl(|x|^{q-1}+2^{q-1}\bigr)$.
\end{lemma}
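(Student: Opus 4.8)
The plan is to show that the class $\mathcal F$ is manageable for the stated envelope $F(x)=q2^q(|x|^{q-1}+2^{q-1})$ by verifying directly the definition given above: that there is a decreasing function $D(\cdot)$ with $\int_0^1\sqrt{\log D(x)}\,\dint x<\infty$ dominating the packing numbers $D(\varepsilon\|F\|_{L_2(Q)},\mathcal F,Q)$ uniformly over all finitely supported finite measures $Q$. The key observation is that $\mathcal F$ is a one-parameter family $\{f(\cdot,t):t\in[0,2]\}$ (recall $\delta_1=1$, so $\mathcal F=\mathcal F(1)$ corresponds to $t\in[0,2]$) which, by Lemma \ref{lem:qlipschitz}, is Lipschitz in the parameter $t$ with Lipschitz constant proportional to the envelope: $|f(x,t_1)-f(x,t_2)|\le F(x)|t_1-t_2|$ for all $x\in\R$ and $t_1,t_2\in[0,2]$. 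In other words, $\mathcal F$ is a \emph{Lipschitz-parametrized} family over a bounded one-dimensional parameter set.

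First I would fix an arbitrary finitely supported finite measure $Q$ on $\R$. For any two parameters $t_1,t_2$, Lemma \ref{lem:qlipschitz} gives
\[
\|f(\cdot,t_1)-f(\cdot,t_2)\|_{L_2(Q)}^2=\int|f(x,t_1)-f(x,t_2)|^2\,\dint Q(x)\le |t_1-t_2|^2\int F(x)^2\,\dint Q(x)=|t_1-t_2|^2\,\|F\|_{L_2(Q)}^2,
\]
so $\|f(\cdot,t_1)-f(\cdot,t_2)\|_{L_2(Q)}\le |t_1-t_2|\,\|F\|_{L_2(Q)}$. Consequently, if $f_1=f(\cdot,s_1),\dots,f_N=f(\cdot,s_N)$ are $\varepsilon\|F\|_{L_2(Q)}$-separated in $L_2(Q)$, then the corresponding parameters $s_1,\dots,s_N\in[0,2]$ are $\varepsilon$-separated in $\R$; since $[0,2]$ has length $2$, a standard volume (packing) bound gives $N\le 1+2/\varepsilon\le 3/\varepsilon$ for $0<\varepsilon\le1$. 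Hence we may take $D(\varepsilon):=3/\varepsilon$ (or any convenient majorant such as $C/\varepsilon$), which is decreasing on $(0,1]$ and satisfies $\int_0^1\sqrt{\log D(x)}\,\dint x=\int_0^1\sqrt{\log(3/x)}\,\dint x<\infty$, the latter integral converging because $\sqrt{\log(1/x)}$ is integrable near $0$. Finally, $F$ is indeed an envelope for $\mathcal F$: taking $t_2=1$ in Lemma \ref{lem:qlipschitz} (or directly) yields $|f(x,t)|=\big||x-t|^q-|x-1|^q\big|\le q2^q(|x|^{q-1}+2^{q-1})|t-1|\le F(x)$ since $|t-1|\le1$ for $t\in[0,2]$. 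This establishes manageability.

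I do not expect a serious obstacle here: the whole argument is a routine reduction of the $L_2(Q)$-packing number of $\mathcal F$ to the (trivial) packing number of a bounded interval, via the Lipschitz bound of Lemma \ref{lem:qlipschitz}. The only point that needs a little care is making sure the bound on $N$ is \emph{independent of $Q$} — which it is, precisely because the Lipschitz constant in Lemma \ref{lem:qlipschitz} is a multiple of the envelope $F$, so the $Q$-dependent factor $\|F\|_{L_2(Q)}$ cancels on both sides of the separation inequality. One should also note the degenerate case $\|F\|_{L_2(Q)}=0$, in which $\mathcal F$ is ($Q$-a.e.) the zero class and the packing number is $1\le D(\varepsilon)$ trivially. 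With these remarks the verification of the definition is complete.
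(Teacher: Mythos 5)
Your proof is correct and follows essentially the same route as the paper, which merely asserts that manageability follows from the Lipschitz bound of Lemma \ref{lem:qlipschitz} ``by partitioning $[0,2]$ into disjoint subintervals and using a pigeonhole argument'' and skips the details. You have simply written out that packing-number argument in full (including the $Q$-independence via cancellation of $\|F\|_{L_2(Q)}$ and the degenerate case), so nothing further is needed.
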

\begin{proof}
With Lemma \ref{lem:qlipschitz} it is immediate that $F$, as defined above, is an envelope for $\mathcal{F}$ and that, for any two functions $f(\cdot,t_1)$ and $f(\cdot,t_2)$ in $\mathcal{F}$,
\[
\abs{f(x,t_1)-f(x,t_2)}\leq F(x)\abs{t_1-t_2}\quad \text{for all }x\in\mathbb{R}.
\]
By partitioning $[0,2]$ into disjoint subintervals and using a pidgeonhole argument, one can prove, similarly to \cite[Example 19.7]{V1998}, that $\mathcal{F}$ is manageable. We skip the details.
\end{proof}

Now that we have proven that $\mathcal{F}$ is manageable for the envelope $F$, we check the conditions of Theorem \ref{thm:pollard} and readily verify that $0\in \mathcal{F}(n)$. Also $\ee{\abs{F(E)}^2}<\infty$ since $E$ has finite moments of all orders. Finally, Lemma \ref{lem:qlipschitz} gives
\[
\sup_{f \in \mathcal{F}(n)} \ee{\abs{f(E)}}\leq \ee{F(E)}\delta_n\to 0.
\]
By Theorem \ref{thm:pollard} applied to $\mathcal{F}$ as above, Markov's inequality, and the fact that $\pr{\abs{\bar{E}_n-1}>\delta_n}\to 0$, as implied by the central limit theorem if we choose $\delta_n\geq n^{-1/2}\log n$, it follows that $R_n$ converges to zero in probability. With this, the proof of Corollary \ref{cor:cltqnorm} is complete.

As an example for the generality of the method, using the same arguments one can show the following result.
\begin{proposition}
Let $1\leq q<\infty$. Let $X,X_1,X_2,\ldots $ be i.i.d. continuous real-valued random variables with finite moments of order $2q$ such that $\sigma_q^2=\Var\bigl(\ee{\vert X-t\vert^q}'(\mu)X+\vert X-\mu\vert^q\bigr)>0$.
Then
\[
\sqrt{n}\left(\frac{1}{n}\sum_{i=1}^n \vert X_i - \bar{X}_n\vert^q - \ee{\vert X - \mu\vert^q}\right)\limd N\sim N(0,\sigma_q^2).
\]
\end{proposition}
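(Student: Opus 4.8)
The plan is to follow the empirical process route already used for Corollary~\ref{cor:cltqnorm}, the two simplifications being that here there is no normalizing factor $\bar{X}_n^{-q}$ and that all exponential-specific computations get replaced by structural identities valid for any continuous $X$ with $\ee{\abs{X-\mu}^{2q}}<\infty$. Writing $\mu_q:=\ee{\abs{X-\mu}^q}$, I would start from the decomposition
\[
\sqrt{n}\Bigl(\frac1n\sum_{i=1}^n\abs{X_i-\bar{X}_n}^q-\mu_q\Bigr)=\sqrt{n}\bigl(Y_n-\mu_q\bigr)+R_n,
\]
where
\[
Y_n:=\frac1n\sum_{i=1}^n\abs{X_i-\mu}^q+\ee{\abs{X-\bar{X}_n}^q}-\mu_q,\qquad R_n:=\sqrt{n}\Bigl(\frac1n\sum_{i=1}^n\bigl(\abs{X_i-\bar{X}_n}^q-\abs{X_i-\mu}^q\bigr)-\ee{\abs{X-\bar{X}_n}^q-\abs{X-\mu}^q}\Bigr),
\]
and the expectations involving $\bar{X}_n$ are understood as the function $t\mapsto\ee{\abs{X-t}^q}$ evaluated at $t=\bar{X}_n$. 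The first summand is handled by the delta method, the remainder $R_n$ by Pollard's maximal inequality (Theorem~\ref{thm:pollard}), and Slutsky's theorem then assembles the two.

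For the main term, note that $Y_n=F(\bar{X}_n,\frac1n\sum_{i=1}^n\abs{X_i-\mu}^q)$ with $F(u,v):=v+\ee{\abs{X-u}^q}-\mu_q$, so $F(\mu,\mu_q)=\mu_q$. The vector $(\bar{X}_n,\frac1n\sum_{i=1}^n\abs{X_i-\mu}^q)^T$ is an empirical average of i.i.d.\ vectors with mean $(\mu,\mu_q)$ and, since $\ee{\abs{X-\mu}^{2q}}<\infty$, finite covariance matrix
\[
\Sigma=\begin{pmatrix}\Var(X) & \Cov(X,\abs{X-\mu}^q)\\[1mm] \Cov(X,\abs{X-\mu}^q) & \Var(\abs{X-\mu}^q)\end{pmatrix},
\]
hence it obeys the multivariate CLT. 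The structural input replacing Lemma~\ref{lem:derivative} is that $u\mapsto\ee{\abs{X-u}^q}$ is continuously differentiable at $u=\mu$: for $q>1$ this follows by dominated convergence, with derivative $-q\,\ee{\abs{X-u}^{q-1}\sgn(X-u)}$, and for $q=1$ it is precisely the continuity of $X$ that yields differentiability, with derivative $2\,\pr{X<u}-1$. Thus $F$ is continuously differentiable at $(\mu,\mu_q)$ with Jacobian $J=(\ee{\abs{X-t}^q}'(\mu),\,1)^T$, and Lemma~\ref{lem:deltamethod} gives $\sqrt{n}(Y_n-\mu_q)\limd N\sim\mathcal{N}(0,J^T\Sigma J)$, where a direct computation identifies $J^T\Sigma J=\Var\bigl(\ee{\abs{X-t}^q}'(\mu)\,X+\abs{X-\mu}^q\bigr)=\sigma_q^2>0$ by hypothesis.

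For the remainder, I would fix a positive sequence $\delta_n\downarrow 0$ with $\delta_n\geq n^{-1/2}\log n$ and set $\mathcal{F}(n):=\{x\mapsto\abs{x-t}^q-\abs{x-\mu}^q:\abs{t-\mu}\leq\delta_n\}$ and $\mathcal{F}:=\mathcal{F}(1)$, so that $0\in\mathcal{F}(n)$. The analogue of Lemma~\ref{lem:qlipschitz} (a mean value estimate, with constants depending on $q$ and on $\abs{\mu}$) furnishes a Lipschitz bound $\abs{f(x,t_1)-f(x,t_2)}\leq F(x)\abs{t_1-t_2}$ for $\abs{t_i-\mu}\leq 1$, with an envelope of the form $F(x)=c_q(\abs{x}^{q-1}+1)$ (a constant when $q=1$); since $\ee{\abs{X}^{2q}}<\infty$ we have $\ee{F(X)^2}<\infty$, and the same partitioning/pigeonhole argument as in Lemma~\ref{lem:manageability} shows $\mathcal{F}$ is manageable for $F$. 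Moreover $\sup_{f\in\mathcal{F}(n)}\ee{\abs{f(X)}}\leq\ee{F(X)}\,\delta_n\to 0$, so Theorem~\ref{thm:pollard} applies and the corresponding $L^2$-supremum tends to $0$. On the event $\{\abs{\bar{X}_n-\mu}\leq\delta_n\}$ the random function $x\mapsto\abs{x-\bar{X}_n}^q-\abs{x-\mu}^q$ lies in $\mathcal{F}(n)$, so $\abs{R_n}$ is bounded by that supremum; together with Markov's inequality and $\pr{\abs{\bar{X}_n-\mu}>\delta_n}\to 0$ (valid because $\sqrt{n}(\bar{X}_n-\mu)$ is tight, $X$ having finite variance, and $\delta_n\geq n^{-1/2}\log n$) this gives $R_n\xrightarrow{\Pro}0$. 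Slutsky's theorem then concludes.

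The genuinely non-routine part is twofold. First, the differentiability of $u\mapsto\ee{\abs{X-u}^q}$ at $\mu$ must be established under only continuity and a moment assumption: this is easy via dominated convergence, but it is exactly here that the hypothesis that $X$ is continuous is indispensable (it is the only way to handle $q=1$), and one must take a little care in the neighbourhood of $u=\mu$. Second, and more essentially, controlling $R_n$ requires absorbing the data-dependent recentring by $\bar{X}_n$ into a function class that is simultaneously \emph{small} (so that $\sup_{f\in\mathcal{F}(n)}\ee{\abs{f(X)}}\to 0$) and \emph{manageable} (so that Pollard's inequality is available); these two demands pull $\delta_n$ in opposite directions — it must vanish, yet not faster than the fluctuations of $\bar{X}_n$ — and once $\delta_n\geq n^{-1/2}\log n$ is fixed the argument proceeds exactly as for Corollary~\ref{cor:cltqnorm}.
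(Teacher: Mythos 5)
Your proposal is correct and follows essentially the same route the paper intends: the paper proves this proposition only by remarking that ``the same arguments'' as in the empirical-process proof of Corollary \ref{cor:cltqnorm} apply, and your decomposition into $Y_n$ and $R_n$, the delta method with Jacobian $J=(\ee{\abs{X-t}^q}'(\mu),1)^T$, and the Pollard/manageability treatment of $R_n$ are exactly those arguments, adapted by dropping the $\bar{X}_n^{-q}$ factor and replacing the exponential-specific Lemma \ref{lem:derivative} by the general differentiability of $u\mapsto\ee{\abs{X-u}^q}$. Your observation that continuity of $X$ is what rescues the case $q=1$ is a correct and worthwhile clarification of a point the paper leaves implicit.
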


\subsection*{Acknowledgement}
ZK has been supported by the German Research Foundation under Germany’s Excellence Strategy EXC 2044 – 390685587, Mathematics M\"unster: Dynamics - Geometry - Structure. JP and MS are supported by the Austrian Science Fund (FWF) Project P32405 \textit{Asymptotic geometric analysis and applications}. Most of this work was done while AB was a visiting PhD student at the University of Graz and we thank the RTG 2131 \textit{High-dimensional phenomena in probability - Fluctuations and discontinuity} for the financial support and the department for providing an optimal working environment.

\bibliographystyle{plain}
\bibliography{simplex}

\bigskip
	
	\bigskip
	
	\medskip
	
	\small
	
	\noindent \textsc{Anastas Baci:} Faculty of Mathematics,
	University of Bochum, 44780 Bochum, Germany
	
	\noindent {\it E-mail:} \texttt{anastas.baci@rub.de}
	
		\medskip
		
	\noindent \textsc{Zakhar Kabluchko:} Faculty of Mathematics, University of M\"unster,
		48149 M\"unster, Germany
		
	\noindent
		{\it E-mail:} \texttt{zakhar.kabluchko@uni-muenster.de}
	
		\medskip
	
	\noindent \textsc{Joscha Prochno:} Institute of Mathematics and Scientific Computing,
	University of Graz, 8010 Graz, Austria
	
	\noindent
	{\it E-mail:} \texttt{joscha.prochno@uni-graz.at}

	    \medskip
		
	\noindent \textsc{Mathias Sonnleitner:} Institute of Mathematics and Scientific Computing, University of Graz, 8010 Graz, Austria
		
	\noindent
		{\it E-mail:} \texttt{mathias.sonnleitner@uni-graz.at}
		
		\medskip
		
	\noindent \textsc{Christoph Th\"ale:} Faculty of Mathematics,
			University of Bochum, 44780 Bochum, Germany
			
	\noindent {\it E-mail:} \texttt{christoph.thaele@rub.de}

\end{document}